\documentclass[dvipdfmx]{article}
\usepackage{amsthm, amscd, amsmath, amsfonts, mathrsfs}
\newtheorem{theo}{Theorem}[section]
\newtheorem{proposition}[theo]{Proposition}
\newtheorem{lemma}[theo]{Lemma}

\newtheorem{rem}[theo]{Remark}
\newtheorem{definition}[theo]{Definition}

\newtheorem{problem}[theo]{Problem}
\input xy
\xyoption{all}

\begin{document}

\title{Geometric Interpretation for Exact Triangles Consisting of Projectively Flat Bundles on Higher Dimensional Complex Tori}

\author{Kazushi Kobayashi\footnote{Department of Mathematics and Informatics, Graduate School of Science, Chiba University, Yayoicho 1-33, Inage, Chiba, 263-8522, Japan. E-mail : kazushi-kobayashi@chiba-u.jp. 2020 Mathematics Subject Classification : 14F08 (primary), 14J33 (secondary). Keywords : torus, projectively flat bundle, homological mirror symmetry.}}

\date{}

\maketitle

\begin{abstract}
Let $(X^n, \check{X}^n)$ be a mirror pair of an $n$-dimensional complex torus $X^n$ and its mirror partner $\check{X}^n$. Then, a simple projectively flat bundle $E(L,\mathcal{L})\rightarrow X^n$ is constructed from each affine Lagrangian submanifold $L$ in $\check{X}^n$ with a unitary local system $\mathcal{L}\rightarrow L$. In this paper, we first interpret these simple projectively flat bundles $E(L,\mathcal{L})$ in the language of factors of automorphy. Furthermore, we give a geometric interpretation for exact triangles consisting of three simple projectively flat bundles $E(L,\mathcal{L})$ and their shifts by focusing on the dimension of intersections of the corresponding affine Lagrangian submanifolds $L$. Finally, as an application of this geometric interpretation, we discuss whether such an exact triangle on $X^n$ ($n\geq 2$) is obtained as the pullback of an exact triangle on $X^1$ by a suitable holomorphic projection $X^n\rightarrow X^1$.
\end{abstract}

\tableofcontents

\section{Introduction}
In this paper, we construct a mirror pair of tori as an analogue of the SYZ construction \cite{SYZ}, and study exact triangles which appear in the discussions in the homological mirror symmetry \cite{Kon} for tori. The SYZ construction is conjectured by Strominger, Yau, and Zaslow in 1996, and it proposes a way of constructing mirror pairs geometrically. Roughly speaking, this construction is the following. A mirror pair of Calabi-Yau manifolds $(M,\check{M})$ is realized as the special Lagrangian torus fibrations $\pi : M\rightarrow B$ and $\check{\pi } : \check{M}\rightarrow B$ on the same base space $B$. In particular, for each point $b\in B$, the special Lagrangian torus fibers $\pi ^{-1}(b)$ and $\check{\pi }^{-1}(b)$ are related by the T-duality. On the other hand, the homological mirror symmetry is conjectured by Kontsevich in 1994, and it states the following. For each Calabi-Yau manifold $M$, there exists a Calabi-Yau manifold $\check{M}$ such that there exists an equivalence
\begin{equation*}
D^b(Coh(M))\cong Tr(Fuk(\check{M}))
\end{equation*}
as triangulated categories. Here, $D^b(Coh(M))$ is the bounded derived category of coherent sheaves on $M$, and $Tr(Fuk(\check{M}))$ is the derived category of the Fukaya category $Fuk(\check{M})$ on $\check{M}$ \cite{Fukaya category} obtained by the Bondal-Kapranov-Kontsevich construction \cite{bondal}, \cite{Kon}. One of the most fundamental examples of mirror pairs is a pair $(X^n, \check{X}^n)$ of tori, where $X^n$ is an $n$-dimensional complex torus and $\check{X}^n$ is a mirror partner of $X^n$, so there are many studies of the homological mirror symmetry for tori. For example, Polishchuk and Zaslow discuss the homological mirror symmetry in the case of elliptic curves, i.e., $(X^1, \check{X}^1)$ in \cite{elliptic} (the details of higher $A_{\infty }$-product structures are studied in \cite{A-inf}), and Fukaya studied the homological mirror symmetry for abelian varieties via the SYZ construction in \cite{Fuk}. In particular, in \cite{Fuk}, he discussed the homological mirror symmetry by focusing on the cases that objects of the Fukaya category are restricted to affine Lagrangian submanifolds with unitary local systems in the symplectic geometry side, and then, the corresponding holomorphic vector bundles are projectively flat. On the other hand, projectively flat bundles are examples of Einstein-Hermitian vector bundles, and Einstein-Hermitian vector bundles relate closely to stable vector bundles via the Kobayashi-Hitchin correspondence \cite{koba}, \cite{K-H}. Hence, projectively flat bundles are also stable. Thus, projectively flat bundles play a fundamental role in the complex or algebraic geometry, including the homological mirror symmetry for tori. Let $(L,\mathcal{L})$ be an object of the Fukaya category $Fuk(\check{X}^n)$, where $L\cong T^n$ is an affine Lagrangian (multi) section of the trivial special Lagrangian torus fibration $\check{\pi} : \check{X}^n\rightarrow T^n$ and $\mathcal{L}\rightarrow L$ is a unitary local system along $L$. Each object $(L,\mathcal{L})$ corresponds to a simple projectively flat bundle $E(L,\mathcal{L})\rightarrow X^n$ via the homological mirror symmetry. Here, special Lagrangian torus fibers of $\check{\pi} : \check{X}^n\rightarrow T^n$ with unitary local systems along them correspond to skyscraper sheaves on $X^n$. We can also regard this correspondence as an analogue of the Fourier-Mukai transform \cite{leung}, \cite{A-P}. Hereafter, we call an affine Lagrangian (multi) section simply an affine Lagrangian submanifold. By the definition of projectively flat bundles, a holomorphic vector bundle $E$ is projectively flat if and only if the curvature form of $E$ is expressed locally as $\alpha \cdot I_E$, where $\alpha $ is a complex 2-form and $I_E$ is the identity endmorphism of $E$. Furthermore, the classification result of factors of automorphy of projectively flat bundles on complex tori is given in \cite{Hano}, \cite{matsu}, \cite{koba}, \cite{yang}. The purposes of this paper are to characterize holomorphic vector bundles $E(L,\mathcal{L})$ by using factors of automorphy of projectively flat bundles on $X^n$, and to study exact triangles consisting of three simple projectively flat bundles $E(L,\mathcal{L})$ and their shifts on a given higher dimensional complex torus $X^n$. 

We explain the body of this paper briefly. Roughly speaking, the body of this paper consists of the two parts which are described below.

The first part is devoted to the study of the projective flatness of $E(L,\mathcal{L})$. For each holomorphic vector bundle $E(L,\mathcal{L})$, we can check easily that the curvature form of $E(L,\mathcal{L})$ is expressed locally as $\alpha \cdot I_{E(L,\mathcal{L})}$, where $\alpha $ is a complex 2-form and $I_{E(L,\mathcal{L})}$ is the identity endmorphism of $E(L,\mathcal{L})$, so $E(L,\mathcal{L})$ is projectively flat. However, the expression of the transition functions of $E(L,\mathcal{L})$ differs from the expression of the factor of automorphy of the projectively flat bundle $\mathcal{E}(L,\mathcal{L})$ which should be isomorphic to $E(L,\mathcal{L})$, so interpreting holomorphic vector bundles $E(L,\mathcal{L})$ in the language of factors of automorphy is a non-trivial problem. Thus, we interpret $E(L,\mathcal{L})$ in the language of factors of automorphy by constructing an isomorphism $E(L,\mathcal{L}) \stackrel{\sim }{\rightarrow } \mathcal{E}(L,\mathcal{L})$ explicitly (Theorem \ref{theo3.6}). 

In the second part, we mainly focus on a higher dimensional complex torus $X^n$, and study exact triangles consisting of simple projectively flat bundles $E(L,\mathcal{L})$ and their shifts on $X^n$. In general, holomorphic vector bundles $E(L,\mathcal{L})$ forms a DG-category $DG_{X^n}$. We expect that this $DG_{X^n}$ generates the bounded derived category of coherent sheaves $D^b(Coh(X^n))$ in the sense of the Bondal-Kapranov-Kontsevich construction,
\begin{equation*}
Tr(DG_{X^n})\cong D^b(Coh(X^n)).
\end{equation*}
At least, it is known that it split generates $D^b(Coh(X^n))$ when $X^n$ is an abelian variety (cf. \cite{orlov}, \cite{abouzaid}). Concerning these facts, in this paper, we focus on the triangulated category $Tr(DG_{X^n})$ instead of $D^b(Coh(X^n))$, and consider an exact triangle
\begin{align}
\begin{CD}
\cdots &@>>> E(L_a, \mathcal{L}_a) @>>> C(\psi) @>>> E(L_b, \mathcal{L}_b) \\
&@>\psi\not=0>> E(L_a, \mathcal{L}_a)[1] @>>> \cdots \label{intro}
\end{CD}
\end{align}
in $Tr(DG_{X^n})$. Here, $C(\psi )$ denotes the mapping cone of a non-trivial morphism $\psi : E(L_b,\mathcal{L}_b)\rightarrow E(L_a,\mathcal{L}_a)[1]$. By the definition of the DG-category $DG_{X^n}$, the degrees of morphisms between holomorphic vector bundles $E(L,\mathcal{L})$ are equal to or larger than 0 in $DG_{X_n}$. This fact implies that each exact triangle consisting of projectively flat bundles and their shifts is always expressed as the exact triangle of the form (\ref{intro}). In order to explain the statement of the main result in this paper, we now recall the previous work \cite{D} briefly. In \cite{D}, we studied the exact triangle of the form (\ref{intro}) under the assumptions $\mathrm{rank}\hspace{0.5mm}E(L_a, \mathcal{L}_a)=1$ and the existence of a holomorphic vector bundle $E(L_c, \mathcal{L}_c)\in \mathrm{Ob}(DG_{X^n})$ such that $C(\psi)\cong E(L_c, \mathcal{L}_c)$. Then, \cite[Theorem 5.6]{D} states that the exact triangle (\ref{intro}) essentially comes from a one-dimensional complex torus, i.e., it is obtained as the pullback of an exact triangle consisting of three projectively flat bundles and their shifts on a one-dimensional complex torus $X^1$ by a suitable holomorphic projection $\pi : X^n \rightarrow X^1$ (Definition \ref{deftr}). In this paper, we discuss a generalization of \cite[Theorem 5.6]{D} to the case that $\mathrm{rank}\hspace{0.5mm}E(L_a, \mathcal{L}_a)$ is not necessarily 1 (Problem \ref{conj}). More precisely, we show that the exact triangle (\ref{intro}) essentially comes from a one-dimensional complex torus if $\mathrm{rank}\hspace{0.5mm}E(L_a, \mathcal{L}_a)$ and $\mathrm{rank}\hspace{0.5mm}E(L_b, \mathcal{L}_b)$ are relatively prime, i.e., $gcd(\mathrm{rank}\hspace{0.5mm}E(L_a, \mathcal{L}_a), \mathrm{rank}\hspace{0.5mm}E(L_b, \mathcal{L}_b))=1$ (Theorem \ref{mainth}). Furthermore, we also give an example of an exact triangle (\ref{intro}) that essentially does not come from a one-dimensional complex torus in the case $gcd(\mathrm{rank}\hspace{0.5mm}E(L_a, \mathcal{L}_a), \mathrm{rank}\hspace{0.5mm}E(L_b, \mathcal{L}_b))\not=1$ under the assumption that the homological mirror symmetry conjecture for $(X^n, \check{X}^n)$ holds true. 

This paper is organized as follows. In section 2, we explain relations between objects $(L,\mathcal{L})$ of the Fukaya category $Fuk(\check{X}^n)$ and holomorphic vector bundles $E(L,\mathcal{L})$. Furthermore, we construct the DG-category $DG_{X^n}$ consisting of those holomorphic vector bundles $E(L,\mathcal{L})$. In section 3, we investigate some properties of holomorphic vector bundles $E(L,\mathcal{L})$. More precisely, for each holomorphic vector bundle $E(L,\mathcal{L})$, we find the projectively flat bundle $\mathcal{E}(L,\mathcal{L})$ which should be isomorphic to $E(L,\mathcal{L})$, and construct an isomorphism $E(L,\mathcal{L}) \stackrel{\sim }{\rightarrow } \mathcal{E}(L,\mathcal{L})$ explicitly. This result is given in Theorem \ref{theo3.6}. In sections 4, 5, we focus on the exact triangle of the form (\ref{intro}) under the assumption $C(\psi)\cong E(L_c, \mathcal{L}_c)$ for a suitable holomorphic vector bundle $E(L_c, \mathcal{L}_c)\in \mathrm{Ob}(DG_{X^n})$. In section 4, as a geometric interpretation for the exact triangle (\ref{intro}) from the viewpoint of the homological mirror symmetry for $(X^n, \check{X}^n)$, we prove codim$(L_a\cap L_b)=1$. This result is given in Theorem \ref{theo4.1}, and it plays a key role in section 5. The purpose of section 5 is to extend \cite[Theorem 5.6]{D} to general settings. In subsection 5.1, we recall the previous result \cite[Theorem 5.6]{D}. In subsection 5.2, in order to generalize \cite[Theorem 5.6]{D} to the case that $\mathrm{rank}\hspace{0.5mm}E(L_a, \mathcal{L}_a)$ is not necessarily 1, we reformulate the problem by focusing on a class of autoequivalences on $Tr(DG_{X^n})$. This is presented in Problem \ref{conj}. In subsection 5.3, we give an answer for Problem \ref{conj}. In particular, in Theorem \ref{mainth}, we prove that Problem \ref{conj} can be solved affirmatively for a certain class of exact triangles in $Tr(DG_{X^n})$. This is the main theorem in this paper.

\section{Holomorphic vector bundles and affine Lagrangian submanifolds with unitary local systems}
In this section, we consider a mirror pair $(T^{2n}_{J=T},\check{T}^{2n}_{J=T})$ of an $n$-dimensional complex torus $T^{2n}_{J=T}$ and its mirror partner $\check{T}^{2n}_{J=T}$, and discuss relations between affine Lagrangian submanifolds in $\check{T}^{2n}_{J=T}$ with unitary local systems and the corresponding holomorphic vector bundles on $T^{2n}_{J=T}$. This is based on the SYZ construction (SYZ transform) \cite{SYZ} (see also \cite{leung}, \cite{A-P}). Furthermore, we define a DG-category consisting of such holomorphic vector bundles. 

First, we explain the complex geometry side. We define a complex torus $T^{2n}_{J=T}$ as follows. Let $T$ be a complex matrix of order $n$ such that $\mathrm{Im}T$ is positive definite. We denote by $t_{ij}$ the $(i,j)$ component of $T$. Let us consider the lattice $L$ in $\mathbb{C}^n$ generated by 
\begin{gather*}
\gamma _1:=(2\pi ,0,\cdots, 0)^t,\cdots, \gamma _n:=(0,\cdots, 0,2\pi )^t,\\
\gamma '_1:=(2\pi t_{11},\cdots, 2\pi t_{n1})^t,\cdots, \gamma '_n:=(2\pi t_{1n},\cdots, 2\pi t_{nn})^t,
\end{gather*}
and define
\begin{equation*}
T^{2n}_{J=T}:=\mathbb{C}^n/L=\mathbb{C}^n/2\pi (\mathbb{Z}^n \oplus T\mathbb{Z}^n).
\end{equation*}
Sometimes we regard the $n$-dimensional complex torus $T^{2n}_{J=T}$ as a $2n$-dimensional real torus $\mathbb{R}^{2n}/2\pi \mathbb{Z}^{2n}$. In this paper, we further assume that $T$ is a non-singular matrix. Actually, in our setting described below, the mirror partner of $T^{2n}_{J=T}$ does not exist if det$T=0$. However, we can avoid this problem and discuss the homological mirror symmetry even if det$T=0$ by modifying the definition of the mirror partner of $T^{2n}_{J=T}$ and a class of holomorphic vector bundles which we treat. This fact is discussed in \cite{kazushi2}. We fix an $\varepsilon >0$ small enough and let
\begin{flalign*}
&O^{l_1\cdots l_n}_{m_1\cdots m_n}:=\biggl\{ \left(\begin{array}{ccc}x\\y\end{array}\right)\in T^{2n}_{J=T} \ |\ \frac{2}{3}\pi (l_j-1)-\varepsilon <x_j<\frac{2}{3}\pi l_j+\varepsilon , &\\
&\hspace{36mm}\frac{2}{3}\pi (m_k-1)-\varepsilon <y_k<\frac{2}{3}\pi m_k+\varepsilon, \ j,k=1,\cdots, n \biggr\}&
\end{flalign*}
be subsets of $T^{2n}_{J=T}$, where $l_j,m_k=1,2,3$, 
\begin{equation*}
x:=(x_1,\cdots, x_n)^t, \ y:=(y_1,\cdots, y_n)^t,
\end{equation*}
and we identify $x_i\sim x_i+2\pi$, $y_i\sim y_i+2\pi$ for each $i=1,\cdots, n$. Sometimes we denote $O^{l_1\cdots (l_j =l) \cdots l_n}_{m_1\cdots (m_k=m) \cdots m_n}$ instead of $O^{l_1\cdots l_n}_{m_1\cdots m_n}$ in order to specify the values $l_j=l$, $m_k=m$. Then, $\{ O_{m_1\cdots m_n}^{l_1\cdots l_n} \}_{l_j, m_k=1,2,3}$ is an open cover of $T^{2n}_{J=T}$. We define the local coordinates of $O^{l_1\cdots l_n}_{m_1\cdots m_n}$ by 
\begin{equation*}
(x_1,\cdots,x_n,y_1,\cdots,y_n)^t\in \mathbb{R}^{2n}.
\end{equation*}
Furthermore, we locally express the complex coordinates $z:=(z_1,\cdots,z_n)^t$ of $T^{2n}_{J=T}$ by $z=x+Ty$.

Now, we define a class of holomorphic vector bundles 
\begin{equation*}
E_{(r,A,\mu ,\mathcal{U})} \rightarrow T^{2n}_{J=T}. 
\end{equation*}
We first construct it as a complex vector bundle, and then discuss when it becomes a holomorphic vector bundle in Proposition \ref{pro2.1}. However, since the notations of transition functions of $E_{(r,A,\mu ,\mathcal{U})}$ are complicated, before giving the strict definition of $E_{(r,A,\mu ,\mathcal{U})}$, we explain the idea of the construction of $E_{(r,A,\mu ,\mathcal{U})}$. We assume $r\in \mathbb{N}$, $A=(a_{ij}) \in M(n;\mathbb{Z})$, and $p=(p_1,\cdots, p_n)^t$, $q=(q_1,\cdots, q_n)^t \in \mathbb{R}^n$. By using these $p$, $q\in \mathbb{R}^n$, we further define $\mu =(\mu _1 ,\cdots, \mu _n )^t$ by $\mu:=p+T^t q\in \mathbb{R}^n\oplus T^t\mathbb{R}^n$. In general, the affine Lagrangian submanifold corresponding to a holomorphic vector bundle $E_{(r,A,\mu ,\mathcal{U})}$ is the following (we will explain the details of the symplectic geometry side again later) :
\begin{equation*}
\left\{ \left( \begin{array}{ccc} \check{x} \\ \check{y} \end{array} \right) \in \check{T}^{2n}_{J=T} \ |\ \check{y}=\frac{1}{r}A\check{x}+\frac{1}{r}p \right\}.
\end{equation*}
Here, $\check{x}:=(x^1,\cdots,x^n)^t$, $\check{y}:=(y^1,\cdots,y^n)^t$ are the coordinates of the mirror partner $\check{T}^{2n}_{J=T}$ of the complex torus $T^{2n}_{J=T}$. In this situation, if $x^j\mapsto x^j+2\pi $ ($j=1,\cdots,n$), then 
\begin{equation*}
\check{y}\mapsto \check{y}+\frac{2\pi }{r}(a_{1j},\cdots, a_{nj})^t.
\end{equation*}
We decide the transition functions of $E_{(r,A,\mu ,\mathcal{U})}$ by using this $\frac{1}{r}(a_{1j},\cdots,a_{nj})^t \in \mathbb{Q}^n$. This construction is a generalization of the case of elliptic curves $(T^2_{J=T},\check{T}^2_{J=T})$ to the higher dimensional case in the paper \cite{kazushi} (see section 2). Now, we give the strict definition of $E_{(r,A,\mu,\mathcal{U})}$. We define $r'\in \mathbb{N}$ by using a given pair $(r,A)\in \mathbb{N}\times M(n;\mathbb{Z})$ as follows. By the theory of elementary divisors, there exist two matrices $\mathcal{A}$, $\mathcal{B}\in GL(n;\mathbb{Z})$ such that 
\begin{equation}
\mathcal{A}A\mathcal{B}=\left( \begin{array}{cccccc} \tilde{a_1} & & & & & \\ & \ddots & & & & \\ & & \tilde{a_s} & & & \\ & & & 0 & & \\ & & & & \ddots & \\ & & & & & 0 \end{array} \right), \label{matAB}
\end{equation}
where $\tilde{a_i}\in \mathbb{N}$ ($i=1,\cdots, s, 1\leq s\leq n$) and $\tilde{a_i}|\tilde{a_{i+1}}$ ($i=1,\cdots, s-1$). Then, we define $r_i'\in \mathbb{N}$ and $a_i'\in \mathbb{Z}$ ($i=1,\cdots, s$) by
\begin{equation*}
\frac{\tilde{a_i}}{r}=\frac{a_i'}{r_i'}, \ gcd(r_i',a_i')=1,
\end{equation*}
where $gcd(m,n)>0$ denotes the greatest common divisor of $m$, $n\in \mathbb{Z}$. By using these, we set
\begin{equation}
r':=r_1'\cdots r_s'\in \mathbb{N}. \label{r'}
\end{equation}
This $r'\in \mathbb{N}$ is uniquely defined by a given pair $(r,A)\in \mathbb{N}\times M(n;\mathbb{Z})$, and it is actually the rank of $E_{(r,A,\mu,\mathcal{U})}$ (in this sense, although we should also emphasize $r'\in \mathbb{N}$ when we denote $E_{(r,A,\mu,\mathcal{U})}$, for simplicity, we use the notation $E_{(r,A,\mu,\mathcal{U})}$ in this paper). Let
\begin{equation*}
\psi ^{l_1 \cdots l_n}_{m_1 \cdots m_n} : O^{l_1 \cdots l_n}_{m_1 \cdots m_n}\rightarrow O^{l_1 \cdots l_n}_{m_1 \cdots m_n} \times \mathbb{C}^{r'}, \hspace{5mm}l_j, m_k =1,2,3
\end{equation*}
be a smooth section of $E_{(r,A,\mu ,\mathcal{U})}|_{O^{l_1 \cdots l_n}_{m_1 \cdots m_n}}$. The transition functions of $E_{(r,A,\mu ,\mathcal{U})}$ are non-trivial on 
\begin{align*}
&O^{(l_1=3) \cdots l_n}_{m_1 \cdots m_n}\cap O^{(l_1=1) \cdots l_n}_{m_1 \cdots m_n}, \ O^{l_1 (l_2=3) \cdots l_n}_{m_1 \cdots m_n}\cap O^{l_1 (l_2=1) \cdots l_n}_{m_1 \cdots m_n},\cdots, O^{l_1 \cdots (l_n=3)}_{m_1 \cdots m_n}\cap O^{l_1 \cdots (l_n=1)}_{m_1 \cdots m_n},\\
&O^{l_1 \cdots l_n}_{(m_1=3) \cdots m_n}\cap O^{l_1 \cdots l_n}_{(m_1=1) \cdots m_n}, \ O^{l_1 \cdots l_n}_{m_1 (m_2=3) \cdots m_n}\cap O^{l_1 \cdots l_n}_{m_1 (m_2=1) \cdots m_n},\cdots, \\
&O^{l_1 \cdots l_n}_{m_1 \cdots (m_n=3)}\cap O^{l_1 \cdots l_n}_{m_1 \cdots (m_n=1)},
\end{align*}
and otherwise are trivial. We define the transition function on $O^{l_1 \cdots (l_j =3) \cdots l_n}_{m_1 \cdots m_n}\cap O^{l_1 \cdots (l_j =1) \cdots l_n}_{m_1 \cdots m_n}$ by
\begin{align*}
&\left.\psi ^{l_1 \cdots (l_j =3) \cdots l_n}_{m_1 \cdots m_n} \right|_{O^{l_1 \cdots (l_j =3) \cdots l_n}_{m_1 \cdots m_n}\cap O^{l_1 \cdots (l_j =1) \cdots l_n}_{m_1 \cdots m_n}}\\
&=e^{\frac{\mathbf{i}}{r}a_j y}V_j \left.\psi ^{l_1 \cdots (l_j =1) \cdots l_n}_{m_1 \cdots m_n} \right|_{O^{l_1 \cdots (l_j =3) \cdots l_n}_{m_1 \cdots m_n}\cap O^{l_1 \cdots (l_j =1) \cdots l_n}_{m_1 \cdots m_n}},
\end{align*}
where $\mathbf{i}=\sqrt{-1}$, $a_j:=(a_{1j},\cdots,a_{nj})\in \mathbb{Z}^n$ and $V_j\in U(r')$. Similarly, we define the transition function on $O^{l_1 \cdots l_n}_{m_1 \cdots (m_k =3) \cdots m_n}\cap O^{l_1 \cdots l_n}_{m_1 \cdots (m_k=1) \cdots m_n}$ by
\begin{align*}
&\left.\psi ^{l_1 \cdots l_n}_{m_1 \cdots (m_k=3) \cdots m_n} \right|_{O^{l_1 \cdots l_n}_{m_1 \cdots (m_k =3) \cdots m_n}\cap O^{l_1 \cdots l_n}_{m_1 \cdots (m_k=1) \cdots m_n}}\\
&=U_k \left.\psi ^{l_1 \cdots l_n}_{m_1 \cdots (m_k=1) \cdots m_n} \right|_{O^{l_1 \cdots l_n}_{m_1 \cdots (m_k =3) \cdots m_n}\cap O^{l_1 \cdots l_n}_{m_1 \cdots (m_k=1) \cdots m_n}},
\end{align*}
where $U_k\in U(r')$. In the definition of these transition functions, actually, we only treat $V_j$, $U_k\in U(r')$ which satisfy the cocycle condition, so we explain the cocycle condition below. When we define
\begin{align*}
&\left.\psi ^{l_1 \cdots (l_j =3) \cdots l_n}_{m_1 \cdots (m_k=3) \cdots m_n} \right|_{O^{l_1 \cdots (l_j =3) \cdots l_n}_{m_1 \cdots (m_k =3) \cdots m_n}\cap O^{l_1 \cdots (l_j =1) \cdots l_n}_{m_1 \cdots (m_k=1) \cdots m_n}}\\
&=U_k \left.\psi ^{l_1 \cdots (l_j =3) \cdots l_n}_{m_1 \cdots (m_k=1) \cdots m_n} \right|_{O^{l_1 \cdots (l_j =3) \cdots l_n}_{m_1 \cdots (m_k =3) \cdots m_n}\cap O^{l_1 \cdots (l_j =1) \cdots l_n}_{m_1 \cdots (m_k=1) \cdots m_n}}\\
&=\Bigl (U_k\Bigr )\left (e^{\frac{\mathbf{i}}{r}a_j y}V_j\right ) \left.\psi ^{l_1 \cdots (l_j =1) \cdots l_n}_{m_1 \cdots (m_k=1) \cdots m_n} \right|_{O^{l_1 \cdots (l_j =3) \cdots l_n}_{m_1 \cdots (m_k =3) \cdots m_n}\cap O^{l_1 \cdots (l_j =1) \cdots l_n}_{m_1 \cdots (m_k=1) \cdots m_n}},
\end{align*}
the cocycle condition is expressed as
\begin{equation*}
V_j V_k=V_k V_j,\ U_j U_k=U_k U_j,\ \zeta  ^{-a_{kj}}U_k V_j=V_j U_k,
\end{equation*}
where $\zeta :=e^{\frac{2\pi \mathbf{i}}{r}}$, and $j,k=1,\cdots,n$. We define a set $\mathcal{U}$ of unitary matrices by 
\begin{align}
&\mathcal{U}:= \Bigl \{ V_j , U_k \in U(r') \ | \ V_j V_k=V_k V_j,\ U_j U_k=U_k U_j,\ \zeta ^{-a_{kj}}U_k V_j=V_j U_k, \notag \\
& \hspace{42mm} j,k=1,\cdots ,n \Bigr \}. \label{setU}
\end{align}
Of course, how to define the set $\mathcal{U}$ relates closely to (in)decomposability of $E_{(r,A,\mu,\mathcal{U})}$. Here, we only treat the set $\mathcal{U}$ such that $E_{(r,A,\mu,\mathcal{U})}$ is simple. Actually, we can take such a set $\mathcal{U}\not=\emptyset $ for any $(r,A,r')\in \mathbb{N}\times M(n;\mathbb{Z})\times \mathbb{N}$, and this fact is proved in \cite[Proposition 3.2]{kazushi3}. Furthermore, we define a connection $\nabla_{(r,A,\mu ,\mathcal{U})}$ on $E_{(r,A,\mu ,\mathcal{U})}$ locally as 
\begin{align*}
\nabla_{(r,A,\mu ,\mathcal{U})}&=d+\omega _{(r,A,\mu ,\mathcal{U})} \\
&:=d-\frac{\mathbf{i}}{2\pi }\left(\frac{1}{r}x^t A^t +\frac{1}{r}\mu ^t\right)dy\cdot I_{r'} \\
&=d-\frac{\mathbf{i}}{2\pi }\left( \left( \frac{1}{r}x^t A^t +\frac{1}{r}p \right)+\frac{1}{r}q^t T \right)dy\cdot I_{r'}, 
\end{align*}
where $dy:=(dy_1,\cdots,dy_n)^t$ and $d$ denotes the exterior derivative. In fact, $\nabla_{(r,A,\mu ,\mathcal{U})}$ is compatible with the transition functions and so defines a global connection. Then, its curvature form $\Omega _{(r,A,\mu ,\mathcal{U})}$ is expressed locally as
\begin{equation*}
\Omega _{(r,A,\mu ,\mathcal{U})}=-\frac{\mathbf{i}}{2\pi r}dx^t A^t dy\cdot I_{r'},
\end{equation*}
where $dx:=(dx_1,\cdots,dx_n)^t$. Here, we consider the condition such that $E_{(r,A,\mu ,\mathcal{U})}$ is holomorphic. We see that the following proposition holds.
\begin{proposition}\label{pro2.1}
For a given quadruple $(r,A,p,q)\in \mathbb{N}\times M(n;\mathbb{Z})\times \mathbb{R}^n\times\mathbb{R}^n$, the complex vector bundle $E_{(r,A,\mu ,\mathcal{U})}\rightarrow T^{2n}_{J=T}$ is holomorphic if and only if $AT=(AT)^t$ holds.
\end{proposition}
\begin{proof}
A complex vector bundle is holomorphic if and only if the (0,2)-part of its curvature form vanishes, so we calculate the (0,2)-part of $\Omega _{(r,A,\mu ,\mathcal{U})}$. It turns out to be
\begin{equation*}
\Omega ^{(0,2)}_{(r,A,\mu ,\mathcal{U})}=\frac{\mathbf{i}}{2\pi r}d\bar{z}^t\{ T(T-\bar{T} )^{-1} \}^t A^t (T-\bar{T})^{-1}d\bar{z}\cdot I_{r'},
\end{equation*}
where $d\bar{z} :=(d\bar{z}_1,\cdots,d\bar{z}_n)^t$. Thus, $\Omega ^{(0,2)}_{(r,A,\mu ,\mathcal{U})}=0$ is equivalent to that $\{ T(T-\bar{T} )^{-1} \}^t A^t (T-\bar{T} )^{-1}$ is a symmetric matrix, i.e., $AT=(AT)^t$.
\end{proof}

Next, we explain the symplectic geometry side. Let us consider the $2n$-dimensional standard real torus $T^{2n}=\mathbb{R}^{2n}/2\pi \mathbb{Z}^{2n}$. For each point $(x^1,\cdots, x^n, \\ y^1,\cdots, y^n)^t \in T^{2n}$, we identify $x^i\sim x^i+2\pi $, $y^i\sim y^i+2\pi $, where $i=1,\cdots, n$. We also denote by $(x^1,\cdots, x^n, y^1,\cdots, y^n)^t$ the local coordinates in the neighborhood of an arbitrary point $(x^1,\cdots, x^n, y^1,\cdots, y^n)^t\in T^{2n}$. Furthermore, we use the same notation $(x^1,\cdots, x^n, y^1,\cdots, y^n)^t$ when we denote the coordinates of the covering space $\mathbb{R}^{2n}$ of $T^{2n}$. For simplicity, we set
\begin{equation*}
\check{x}:=(x^1,\cdots, x^n)^t, \ \check{y}:=(y^1,\cdots, y^n)^t.
\end{equation*}
We define a complexified symplectic form $\tilde{\omega } $ on $T^{2n}$ by
\begin{equation*}
\tilde{\omega }  :=d\check{x}^t (-T^{-1})^t d\check{y},
\end{equation*}
where $d\check{x}:=(dx^1,\cdots,dx^n)^t$ and $d\check{y}:=(dy^1,\cdots,dy^n)^t$. We decompose $\tilde{\omega }$ into 
\begin{equation*}
\tilde{\omega } =d\check{x}^t \mathrm{Re}(-T^{-1})^t d\check{y} +\mathbf{i} d\check{x}^t \mathrm{Im}(-T^{-1})^t d\check{y},
\end{equation*}
and define 
\begin{equation*}
\omega :=\mathrm{Im}(-T^{-1})^t , \ B:=\mathrm{Re}(-T^{-1})^t.
\end{equation*}
Sometimes we identify the matrices $\omega $ and $B$ with the 2-forms $d\check{x}^t \omega d\check{y}$ and $d\check{x}^t B d\check{y}$, respectively. Then, $\omega $ gives a symplectic form on $T^{2n}$. The closed 2-form $B$ is often called the $B$-field. This complexified symplectic torus $(T^{2n}, \tilde{\omega}=d\check{x}^t (-T^{-1})^t d\check{y})$ is a mirror partner of the complex torus $T^{2n}_{J=T}$. Hereafter, we denote
\begin{equation*}
\check{T}^{2n}_{J=T}:=(T^{2n}, \tilde{\omega}=d\check{x}^t (-T^{-1})^t d\check{y})
\end{equation*}
for simplicity. We define the objects of the Fukaya category on $\check{T}^{2n}_{J=T}$ corresponding to holomorphic vector bundles $E_{(r,A,\mu ,\mathcal{U})}\rightarrow T^{2n}_{J=T}$, namely, the pairs of affine Lagrangian submanifolds in $\check{T}^{2n}_{J=T}$ and unitary local systems along them. First, we recall the definition of objects of the Fukaya categories following \cite[Definition 1.1]{Fuk}. Let $(M, \Omega)$ be a symplectic manifold $(M, \omega)$ together with a closed 2-form $B$ on $M$. Here, we put $\Omega=\omega+\sqrt{-1}B$ (note $-B+\sqrt{-1}\omega$ is used in many of the literatures). Then, we consider pairs $(L, \mathcal{L})$ with the following properties :
\begin{align}
&L \ \mathrm{is} \ \mathrm{a} \ \mathrm{Lagrangian} \ \mathrm{submanifold} \ \mathrm{of} \ (M,\omega). \label{f1} \\
&\mathcal{L}\rightarrow L \ \mathrm{is} \ \mathrm{a} \ \mathrm{line} \ \mathrm{bundle} \ \mathrm{together} \ \mathrm{with} \ \mathrm{a} \ \mathrm{connection} \ \nabla^{\mathcal{L}} \ \mathrm{such} \ \mathrm{that} \label{f2} \\
&F_{\nabla^{\mathcal{L}}}=2\pi\sqrt{-1}B|_L. \notag
\end{align}
In this context, $F_{\nabla^{\mathcal{L}}}$ denotes the curvature form of the connection $\nabla^{\mathcal{L}}$. We define objects of the Fukaya category on $(M, \Omega)$ by pairs $(L,\mathcal{L})$ which satisfy the properties (\ref{f1}), (\ref{f2}). Let us consider the following $n$-dimensional submanifold $\tilde{L}_{(r,A,p)}$ in $\mathbb{R}^{2n}$ :
\begin{equation*}
\tilde{L} _{(r,A,p)}:=\left\{ \left( \begin{array}{ccc} \check{x} \\ \check{y} \end{array} \right)\in \mathbb{R}^{2n} \ | \ \check{y}=\frac{1}{r}A\check{x}+\frac{1}{r}p \right\}.
\end{equation*}
We see that this $n$-dimensional submanifold $\tilde{L} _{(r,A,p)}$ satisfies the property (\ref{f1}), namely, $\tilde{L}_{(r,A,p)}$ becomes a Lagrangian submanifold in $\mathbb{R}^{2n}$ if and only if $\omega A=(\omega A)^t$ holds. Then, for the covering map $\pi : \mathbb{R}^{2n}\rightarrow \check{T}^{2n}_{J=T}$, 
\begin{equation*}
L_{(r,A,p)}:=\pi (\tilde{L}_{(r,A,p)})
\end{equation*}
defines a Lagrangian submanifold in $\check{T}^{2n}_{J=T}$. On the other hand, we can also regard the complexified symplectic torus $\check{T}^{2n}_{J=T}$ as the trivial special Lagrangian torus fibration $\check{\pi } : \check{T}^{2n}_{J=T} \rightarrow \mathbb{R}^n/2\pi \mathbb{Z}^n$, where $\check{x}$ is the local coordinates of the base space $\mathbb{R}^n/2\pi \mathbb{Z}^n$ and $\check{y}$ is the local coordinates of the fiber of $\check{\pi } : \check{T}^{2n}_{J=T} \rightarrow \mathbb{R}^n/2\pi \mathbb{Z}^n$. Then, we can interpret each affine Lagrangian submanifold $L_{(r,A,p)}$ in $\check{T}^{2n}_{J=T}$ as the affine Lagrangian multi section 
\begin{equation*}
s(\check{x})=\frac{1}{r}A\check{x}+\frac{1}{r}p
\end{equation*}
of $\check{\pi } : \check{T}^{2n}_{J=T} \rightarrow \mathbb{R}^n/2\pi \mathbb{Z}^n$.
\begin{rem}
As explained above, while $r':=r_1'\cdots r_s'\in \mathbb{N}$ is the rank of $E_{(r,A,\mu,\mathcal{U})}\rightarrow T^{2n}_{J=T}$ $($see the relations $(\ref{matAB})$ and $(\ref{r'}))$, in the symplectic geometry side, this $r'\in \mathbb{N}$ is interpreted as follows. For the affine Lagrangian submanifold $L_{(r,A,p)}$ in $\check{T}^{2n}_{J=T}$ which is defined by a given data $(r,A,p)\in \mathbb{N}\times M(n;\mathbb{Z})\times \mathbb{R}^n$, we regard it as the affine Lagrangian multi section $s(\check{x})=\frac{1}{r}A\check{x}+\frac{1}{r}p$ of $\check{\pi } : \check{T}^{2n}_{J=T} \rightarrow \mathbb{R}^n/2\pi \mathbb{Z}^n$. Then, for each point $\check{x}\in \mathbb{R}^n/2\pi \mathbb{Z}^n$, we see
\begin{align*}
s(\check{x})=\biggl\{ & \left( \frac{1}{r}A\check{x}+\frac{1}{r}p+\frac{2\pi }{r}A\mathcal{B}M_s \right) \in \check{\pi }^{-1}(\check{x})\approx \mathbb{R}^n/2\pi \mathbb{Z}^n \ | \\
&M_s=(m_1,\cdots, m_s, 0,\cdots, 0)^t\in \mathbb{Z}^n, \ 0\leq m_i\leq r_i'-1, \ i=1,\cdots, s \biggr\},
\end{align*}
and this indicates that $s(\check{x})$ consists of $r'$ points. Thus, we can regard $r'\in \mathbb{N}$ as the multiplicity of $s(\check{x})=\frac{1}{r}A\check{x}+\frac{1}{r}p$.
\end{rem}
We then consider the trivial complex line bundle 
\begin{equation*}
\mathcal{L}_{(r,A,p,q)}\rightarrow L_{(r,A,p)}
\end{equation*}
with the flat connection
\begin{equation*}
\nabla_{\mathcal{L}_{(r,A,p,q)}}:=d-\frac{\mathbf{i}}{2\pi }\frac{1}{r}q^t d\check{x},
\end{equation*}
where $q\in \mathbb{R}^n$ is the unitary holonomy of $\mathcal{L}_{(r,A,p,q)}$ along $L_{(r,A,p)}\approx T^n$. We discuss the property (\ref{f2}) for this pair $(L_{(r,A,p)}, \mathcal{L}_{(r,A,p,q)})$ :
\begin{equation*}
\Omega _{\mathcal{L}_{(r,A,p,q)}}=\left. d\check{x}^t B d\check{y} \right|_{L_{(r,A,p)}}.
\end{equation*}
Here, $\Omega _{\mathcal{L}_{(r,A,p,q)}}$ is the curvature form of the flat connection $\nabla_{\mathcal{L}_{(r,A,p,q)}}$, i.e., $\Omega _{\mathcal{L}_{(r,A,p,q)}}=0$. Hence, we see 
\begin{equation*}
\left. d\check{x}^t B d\check{y}\right|_{L_{(r,A,p)}}=\frac{1}{r}d\check{x}^t BA d\check{x}=0,
\end{equation*}
so one has $BA=(BA)^t$. Note that $\omega A=(\omega A)^t$ and $BA=(BA)^t$ hold if and only if $AT=(AT)^t$ holds. By summarizing the above discussions, we obtain the following proposition. In particular, the condition $AT=(AT)^t$ in the following proposition is also the condition such that a complex vector bundle $E_{(r,A,\mu,\mathcal{U})}\rightarrow T^{2n}_{J=T}$ becomes a holomorphic vector bundle (see Proposition \ref{pro2.1}).
\begin{proposition} \label{propfukob}
For a given quadruple $(r,A,p,q)\in \mathbb{N}\times M(n;\mathbb{Z})\times \mathbb{R}^n\times \mathbb{R}^n$, $(L_{(r,A,p)}, \mathcal{L}_{(r,A,p,q)})$ gives an object of the Fukaya category on $\check{T}^{2n}_{J=T}$ if and only if $AT=(AT)^t$ holds.
\end{proposition}
\begin{definition}
We denote the full subcategory of the Fukaya category on $\check{T}^{2n}_{J=T}$ consisting of objects $(L_{(r,A,p)}, \mathcal{L}_{(r,A,p,q)})$ which satisfy the condition $AT=(AT)^t$ by $Fuk_{\rm aff}(\check{T}^{2n}_{J=T})$.
\end{definition}

We define a DG-category 
\begin{equation*}
DG_{T^{2n}_{J=T}}
\end{equation*}
consisting of holomorphic vector bundles $(E_{(r,A,\mu ,\mathcal{U})},\nabla_{(r,A,\mu ,\mathcal{U})})$. This definition is an extension of the case of elliptic curves $(T^2_{J=T},\check{T}^2_{J=T})$ to the higher dimensional case in the paper \cite{kazushi} (see section 3). The objects of $DG_{T^{2n}_{J=T}}$ are holomorphic vector bundles $E_{(r,A,\mu ,\mathcal{U})}$ with $U(r')$-connections $\nabla_{(r,A,\mu ,\mathcal{U})}$. Of course, we assume $AT=(AT)^t$. Sometimes we simply denote $(E_{(r,A,\mu ,\mathcal{U})},\nabla_{(r,A,\mu ,\mathcal{U})})$ by $E_{(r,A,\mu ,\mathcal{U})}$. For any two objects 
\begin{equation*}
E_{(r,A,\mu ,\mathcal{U})}=(E_{(r,A,\mu ,\mathcal{U})},\nabla_{(r,A,\mu ,\mathcal{U})}), \ E_{(s,B,\nu ,\mathcal{V})}=(E_{(s,B,\nu ,\mathcal{V})},\nabla_{(s,B,\nu ,\mathcal{V})}), 
\end{equation*}
the space of morphisms is defined by
\begin{equation*}
\mathrm{Hom}_{DG_{T^{2n}_{J=T}}}(E_{(r,A,\mu ,\mathcal{U})},E_{(s,B,\nu ,\mathcal{V})}):=\Gamma (E_{(r,A,\mu ,\mathcal{U})},E_{(s,B,\nu ,\mathcal{V})})\bigotimes _{C^{\infty }(T^{2n}_{J=T})}\Omega ^{0,*}(T^{2n}_{J=T}),
\end{equation*}
where $\Omega ^{0,*}(T^{2n}_{J=T})$ is the space of anti-holomorphic differential forms, and 
\begin{equation*}
\Gamma (E_{(r,A,\mu ,\mathcal{U})},E_{(s,B,\nu ,\mathcal{V})})
\end{equation*}
is the space of homomorphisms from $E_{(r,A,\mu ,\mathcal{U})}$ to $E_{(s,B,\nu ,\mathcal{V})}$. The space of morphisms $\mathrm{Hom}_{DG_{T^{2n}_{J=T}}}(E_{(r,A,\mu ,\mathcal{U})},E_{(s,B,\nu ,\mathcal{V})})$ is a $\mathbb{Z}$-graded vector space, where the grading is defined as the degree of the anti-holomorphic differential forms. The degree $r$ part is denoted $\mathrm{Hom}^r_{DG_{T^{2n}_{J=T}}}(E_{(r,A,\mu ,\mathcal{U})},E_{(s,B,\nu ,\mathcal{V})})$. We decompose $\nabla_{(r,A,\mu ,\mathcal{U})}$ into its holomorphic part and anti-holomorphic part $\nabla_{(r,A,\mu ,\mathcal{U})}=\nabla ^{(1,0)}_{(r,A,\mu ,\mathcal{U})}+\nabla ^{(0,1)}_{(r,A,\mu ,\mathcal{U})}$, and define a linear map
\begin{equation*}
\mathrm{Hom}^r_{DG_{T^{2n}_{J=T}}}(E_{(r,A,\mu ,\mathcal{U})},E_{(s,B,\nu ,\mathcal{V})})\rightarrow \mathrm{Hom}^{r+1}_{DG_{T^{2n}_{J=T}}}(E_{(r,A,\mu ,\mathcal{U})},E_{(s,B,\nu ,\mathcal{V})})
\end{equation*}
by
\begin{equation*}
\psi \mapsto ( 2\nabla^{(0,1)}_{(s,B,\nu ,\mathcal{V})} )(\psi )-(-1)^r\psi  (2\nabla ^{(0,1)}_{(r,A,\mu ,\mathcal{U})} ).
\end{equation*}
We can check that this linear map is a differential. Furthermore, the product structure is defined by the composition of homomorphisms of vector bundles together with the wedge product for the anti-holomorphic differential forms. Then, these differential and product structure satisfy the Leibniz rule. Thus, $DG_{T^{2n}_{J=T}}$ forms a DG-category.
\begin{rem}
In general, for any $A_{\infty}$-category $\mathscr{C}$, we can construct a triangulated category $Tr(\mathscr{C})$ by using the Bondal-Kapranov-Kontsevich construction \cite{bondal}, \cite{Kon}. We expect that the DG-category $DG_{T^{2n}_{J=T}}$ generates the bounded derived category of coherent sheaves $D^b(Coh(T^{2n}_{J=T}))$ on $T^{2n}_{J=T}$ in the sense of the Bondal-Kapranov-Kontsevich construction, i.e.,
\begin{equation*}
Tr(DG_{T^{2n}_{J=T}})\cong D^b(Coh(T^{2n}_{J=T})).
\end{equation*}
At least, it is known that it split generates $D^b(Coh(T^{2n}_{J=T}))$ when $T^{2n}_{J=T}$ is an abelian variety (cf. \cite{orlov}, \cite{abouzaid}).
\end{rem}
On the correspondence between two $A_{\infty}$(DG)-categories $DG_{T^{2n}_{J=T}}$ and $Fuk_{\rm aff}(\check{T}^{2n}_{J=T})$, it is known that the following theorem holds (\cite[Theorem 5.1]{kazushi3}). Note that two parameters $\theta$, $\xi\in \mathbb{R}^n$ in the following theorem are defined as follows. For elements $V_j$, $U_k\in \mathcal{U}$ $(j,k=1,\cdots, n)$, Let us define $\xi_j$, $\theta_k\in \mathbb{R}$ by
\begin{equation*}
e^{\mathbf{i}\xi_j}=\mathrm{det}V_j, \ e^{\mathbf{i}\theta_k}=\mathrm{det}U_k.
\end{equation*}
Then, we set
\begin{equation*}
\xi:=(\xi_1,\cdots, \xi_n)^t, \ \theta:=(\theta_1,\cdots, \theta_n)^t.
\end{equation*}
\begin{theo}\label{bijection1}
A map $\mathrm{Ob}(DG_{T^{2n}_{J=T}}) \rightarrow \mathrm{Ob}(Fuk_{\rm aff}(\check{T}^{2n}_{J=T}))$ is defined by
\begin{equation*}
E_{(r,A,\mu, \mathcal{U})} \mapsto (L_{(r,A,p-\frac{r}{r'}\theta)}, \mathcal{L}_{(r,A,p-\frac{r}{r'}\theta,q+\frac{r}{r'}\xi)}),
\end{equation*}
and it induces a bijection between $\mathrm{Ob}^{isom}(DG_{T^{2n}_{J=T}})$ and $\mathrm{Ob}^{isom}(Fuk_{\rm aff}(\check{T}^{2n}_{J=T}))$, where $\mathrm{Ob}^{isom}(DG_{T^{2n}_{J=T}})$ and $\mathrm{Ob}^{isom}(Fuk_{\rm aff}(\check{T}^{2n}_{J=T}))$ denote the set of the isomorphism classes of objects of $DG_{T^{2n}_{J=T}}$ and the set of the isomorphism classes of objects of $Fuk_{\rm aff}(\check{T}^{2n}_{J=T})$, respectively\footnote{We consider affine Lagrangian submanifolds only in this paper, so two objects $(L_{(r,A,p)},\mathcal{L}_{(r,A,p,q)})$, $(L_{(s,B,u)},\mathcal{L}_{(s,B,u,v)})\in Fuk_{\rm aff}(\check{T}^{2n}_{J=T})$ are isomorphic to each other if and only if $L_{(r,A,p)}=L_{(s,B,u)}$ and $\mathcal{L}_{(r,A,p,q)}\cong \mathcal{L}_{(s,B,u,v)}$.}.
\end{theo}

\section{The construction of an isomorphism $E_{(r,A,\mu ,\mathcal{U})}\cong \mathcal{E}_{(r,A,\mu ,\mathcal{U} )}$}
In this section, we first recall the definition of projectively flat bundles and some properties of them. Next, we construct a one-to-one correspondence between holomorphic vector bundles $E_{(r,A,\mu ,\mathcal{U})}$ and a certain kind of projectively flat bundles. In general, factors of automorphy of projectively flat bundles on complex tori are classified concretely, so we interpret holomorphic vector bundles $E_{(r,A,\mu ,\mathcal{U})}$ in the language of those factors of automorphy. This result is given in Theorem \ref{theo3.6}.

We recall the definition of factors of automorphy for holomorphic vector bundles following \cite{koba}. Let $M$ be a complex manifold such that its universal covering space $\tilde{M} $ is a topologically trivial (contractible) Stein manifold ($\mathbb{C}^n$ is an example of a Stein manifold). Let $p : \tilde{M} \rightarrow M$ be the covering projection and $\Gamma $ the covering transformation group acting on $\tilde{M} $ so that $M=\tilde{M} /\Gamma $. Let $E$ be a holomorphic vector bundle of rank $r$ over $M$. Then its pull-back $\tilde{E}=p^*E$ is a holomorphic vector bundle of the same rank over $\tilde{M} $. Since $\tilde{M} $ is topologically trivial, $\tilde{E} $ is topologically a product bundle. Since $\tilde{M} $ is Stein, by Oka's principle, $\tilde{E} $ is holomorphically a product bundle, i.e., $\tilde{E}=\tilde{M} \times \mathbb{C}^r$. Having fixed this isomorphism, we define a holomorphic map $j : \Gamma \times \tilde{M} \rightarrow GL(r;\mathbb{C})$ by the commutative diagram
\begin{equation*}
\xymatrix{ \tilde{E}_{\gamma (x)}\cong \mathbb{C}^r \ar[dr]& &\tilde{E}_x \cong \mathbb{C}^r \ar[dl] \ar[ll]_{j(\gamma ,x)}\\
& E_{p(x)} & ,}
\end{equation*}
where $x\in \tilde{M} $, $\gamma \in \Gamma $. Then, for $x\in \tilde{M} $, $\gamma ,\gamma '\in \Gamma $, the relation 
\begin{equation*}
j(\gamma +\gamma ',x)=j(\gamma ',x+\gamma )\circ j(\gamma ,x)
\end{equation*}
holds. The map $j : \Gamma \times \tilde{M} \rightarrow GL(r;\mathbb{C})$ is called the factor of automorphy for the holomorphic vector bundle $E$.

Now, we recall the definition and some properties of projectively flat bundles.
\begin{definition}[Projectively flat bundles, \cite{Hano}, \cite{matsu}, \cite{koba}, \cite{yang}]
Let $E$ be a holomorphic vector bundle of rank $r$ over a compact K\"{a}hler manifold $M$ and $P(E)$ its associated principal $GL(r;\mathbb{C})$-bundle. Then $\hat{P}(E)=P(E)/\mathbb{C}^{\times} I_r$ is a principal $PGL(r;\mathbb{C})$-bundle. We say that $E$ is projectively flat when $\hat{P}(E)$ is provided with a flat structure.
\end{definition}
For a complex vector bundle $E$ of rank $r$ with a connection $D$ over a compact K\"{a}hler manifold $M$, it is known that the following proposition holds.
\begin{proposition}[\cite{matsu}, \cite{koba}, \cite{yang}]\label{pro3.2}
Let $R$ be a curvature of $(E,D)$. Then, $E$ is projectively flat if and only if $R$ takes values in scalar multiples of the identity endmorphism $I_E$ of $E$, i.e., if and only if there exists a complex 2-form $\alpha $ on $M$ such that $R=\alpha \cdot I_E$.
\end{proposition}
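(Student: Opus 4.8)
The plan is to reduce the equivalence to the linear-algebra projection $M(r;\mathbb{C}) \to M(r;\mathbb{C})/\mathbb{C}I_r$ of Lie algebras induced by the group homomorphism $GL(r;\mathbb{C}) \to PGL(r;\mathbb{C}) = GL(r;\mathbb{C})/\mathbb{C}^*I_r$, whose kernel is exactly the scalar matrices $\mathbb{C}I_r$. First I would pass from $(E,D)$ to the associated principal $GL(r;\mathbb{C})$-bundle $P$ (the frame bundle): the connection $D$ determines a principal connection on $P$ whose curvature, viewed as a $2$-form with values in the adjoint bundle $\mathrm{ad}(P) = \mathrm{End}(E)$, is precisely $R$. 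Under the projection $P \to \hat{P} = P/\mathbb{C}^*I_r$ this connection pushes forward to a principal connection on $\hat{P}$, and by naturality of curvature its curvature, regarded as a $2$-form with values in $\mathrm{ad}(\hat{P}) = \mathrm{End}_0(E)$ (trace-free endomorphisms), is the image of $R$ under the fiberwise projection. Concretely, decomposing $R = R_0 + \frac{1}{r}(\mathrm{tr}\,R)\,I_E$ into its trace-free part $R_0$ and its scalar part, the induced curvature on $\hat{P}$ is identified with $R_0$.

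With this in hand both implications follow from the fact that $\hat{P}$ carries a flat structure exactly when the induced connection on $\hat{P}$ can be taken to have vanishing curvature. For the direction $R = \alpha\cdot I_E \Rightarrow$ projectively flat, a scalar curvature means $R_0 = 0$, so the connection induced by $D$ on $\hat{P}$ is already flat and $E$ is projectively flat by definition. For the converse I would use that $R_0$ is an invariant of the induced projective connection rather than of $D$ itself: if $D'$ is any connection on $E$ inducing the same connection on $\hat{P}$, then $D' - D$ projects to zero in $\mathrm{End}_0(E)$ and is therefore of the form $\theta\cdot I_E$ for a scalar $1$-form $\theta$, whence the curvatures satisfy $R' - R = d\theta\cdot I_E$ because the scalar part is central; thus the trace-free parts agree, $R'_0 = R_0$. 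Consequently $R_0$ depends only on the induced connection on $\hat{P}$ and vanishes precisely when that connection is flat, so that if $E$ is projectively flat one obtains $R_0 = 0$, hence $R = \frac{1}{r}(\mathrm{tr}\,R)\,I_E$, and $\alpha := \frac{1}{r}\mathrm{tr}\,R$ is the desired complex $2$-form.

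The step demanding the most care is the converse, namely tying the abstract flat structure furnished by the definition back to a connection on $E$: the flat structure on $\hat{P}$ is a priori only a $PGL(r;\mathbb{C})$-datum, and one must produce a connection on $E$ inducing it before its curvature can be compared with $R$. I would handle this by lifting the locally constant $PGL(r;\mathbb{C})$-transition functions of the flat structure to $GL(r;\mathbb{C})$-valued functions (possible locally, the scalar ambiguity being exactly what is absorbed into $\alpha$) and assembling a connection on $E$ whose induced connection on $\hat{P}$ is the given flat one; the invariance of $R_0$ established above then guarantees that the conclusion $R = \alpha\cdot I_E$ is independent of these choices. The compact K\"ahler hypothesis on $M$ plays no essential role in this pointwise characterization and is only inherited from the standing hypotheses in the definition.
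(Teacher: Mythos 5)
The paper does not actually prove this proposition: it is quoted verbatim from Matsushima, Kobayashi and Yang, so there is no internal proof to compare against. Your argument is the standard one from those references, and its core is sound: $D$ induces a principal connection on $\hat{P}=P/\mathbb{C}^*I_r$, the adjoint bundle of $\hat{P}$ is $\mathrm{End}(E)/\mathbb{C}I_E\cong\mathrm{End}_0(E)$, and by naturality the curvature of the induced connection is the trace-free part $R_0=R-\frac{1}{r}(\mathrm{tr}\,R)\,I_E$; hence that induced connection is flat if and only if $R=\alpha\cdot I_E$ with $\alpha=\frac{1}{r}\mathrm{tr}\,R$. (Note the paper's definition has a typo: $P$ should be the principal $GL(r;\mathbb{C})$-bundle, as you correctly take it to be.)

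The one step that does not close is exactly the one you flag as delicate. ``Projectively flat'' as defined only asks that $\hat{P}$ admit \emph{some} flat structure, while your invariance lemma shows that $R_0$ depends only on the connection that $D$ \emph{itself} induces on $\hat{P}$; it says nothing if the flat structure corresponds to a different connection on $\hat{P}$. Your proposed repair --- lifting the locally constant $PGL(r;\mathbb{C})$ transition data to a connection $D''$ on $E$ --- yields a $D''$ inducing the flat connection on $\hat{P}$, but $D''$ and $D$ need not induce the \emph{same} connection on $\hat{P}$, so the invariance of $R_0$ does not transfer the vanishing back to $D$. Indeed, read literally the statement is false: a trivial rank-two bundle is projectively flat in the ``exists a flat structure'' sense, yet carries connections with non-scalar curvature. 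The resolution is that in the cited sources the proposition is really about the connection: $D$ is called projectively flat when the connection it induces on $\hat{P}$ is flat, and with that intended reading your main computation already gives both implications. Your closing remark that compactness and the K\"ahler condition play no role in this pointwise statement is correct.
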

There are many studies of projectively flat bundles on complex tori, i.e., the cases $M=\mathbb{C}^n/\Gamma $, where $\Gamma $ is a nondegenerate lattice of rank $2n$ in $\mathbb{C}^n$ (\cite{Hano}, \cite{matsu}, \cite{koba}, \cite{yang} etc.). Let us denote the coordinates of $\mathbb{C}^n$ by $z=(z_1,\cdots,z_n)^t$. Hereafter, we focus on projectively flat bundles which admit Hermitian structures\footnote{In fact, since we do not mention Hermitian structures explicitly in our main discussions, readers do not have to consider them so much in section 3.} over a complex torus $\mathbb{C}^n/\Gamma$. On the detail of the results which are described below, for example, see \cite{Hano}, \cite{matsu}, \cite{koba}, \cite{yang}. Now, we recall the following theorem (see \cite[Theorem 4.7.54]{koba}) which plays an important role in our main discussions in this section.
\begin{theo}\label{projflat}
Let $E$ be a holomorphic vector bundle of rank $r$ over a complex torus $\mathbb{C}^n/\Gamma$. If $E$ admits a projectively flat Hermitian structure $h$, then its factor of automorphy $j$ can be written as follows $:$
\begin{equation*}
j(\gamma ,z)=U(\gamma )\mathrm{exp}\left\{ \frac{1}{r}\mathcal{R}(z,\gamma )+\frac{1}{2r}\mathcal{R}(\gamma ,\gamma ) \right\} \ \ (\gamma, z)\in \Gamma \times \mathbb{C}^n,
\end{equation*}
where \\\\
$(\mathrm{i})$ $\mathcal{R}$ is a Hermitian form on $\mathbb{C}^n$ and its imaginary part satisfies
\begin{equation*}
\mathrm{Im}\mathcal{R}(\gamma, \gamma')\in \pi \mathbb{Z} \  \ for \  \ \gamma, \gamma'\in \Gamma,
\end{equation*}
$(\mathrm{ii})$ $U : \Gamma \rightarrow U(r)$ is a semi-representation in the sense that it satisfies
\begin{equation*}
U(\gamma +\gamma ')=U(\gamma )U(\gamma ')e^{\frac{\mathbf{i}}{r}\mathrm{Im}\mathcal{R}(\gamma ' ,\gamma )} \ \ for \ \ \gamma, \gamma'\in \Gamma.
\end{equation*}
Conversely, given a Hermitian form $\mathcal{R}$ on $\mathbb{C}^n$ with property $(\mathrm{i})$ and a semi-representation $U : \Gamma \rightarrow U(r)$, we can define a factor of automorphy $j : \Gamma \times \mathbb{C}^n \rightarrow CU(r)$ as above, where
\begin{equation*}
CU(r):=\left\{ cU \ | \ c\in \mathbb{C}^{\times} \ and \ U\in U(r) \right\}.
\end{equation*}
The corresponding vector bundle $E$ over $\mathbb{C}^n/\Gamma$ admits a projectively flat Hermitian structure.
\end{theo}
In Theorem \ref{projflat}, of course, by using a Hermitian matrix $R$, we can denote
\begin{equation*}
\mathcal{R}(z,w)=z^t R \bar{w},
\end{equation*}
where $z=(z_1,\cdots, z_n)^t$ and $w=(w_1,\cdots, w_n)^t$. Then, under the situation of Theorem \ref{projflat}, the connection 1-form $\omega$ of the Hermitian connection of $(E, h)$ is expressed locally as 
\begin{equation*}
\omega =-\frac{1}{r}\mathcal{R}(dz,z)\cdot I_r+dz^t b\cdot I_r,
\end{equation*}
where $dz:=(dz_1,\cdots,dz_n)^t$ and $b:=(b_1,\cdots,b_n)^t\in \mathbb{C}^n$ is a constant vector. Furthermore, the curvature form $\Omega$ of the Hermitian connection of $(E, h)$ is expressed locally as
\begin{equation*}
\Omega=\frac{1}{r}dz^t R d\bar{z}\cdot I_r.
\end{equation*}

We consider the case $T^{2n}_{J=T}=\mathbb{C}^n/2\pi (\mathbb{Z}^n \oplus T\mathbb{Z}^n)=\mathbb{C}^n/L$, and discuss the relations between holomorphic vector bundles $E_{(r,A,\mu ,\mathcal{U})}$ and projectively flat bundles. Note that the curvature form $\Omega _{(r,A,\mu ,\mathcal{U})}$ of a holomorphic vector bundle $E_{(r,A,\mu ,\mathcal{U})}$ is expressed locally as 
\begin{equation*}
\Omega _{(r,A,\mu ,\mathcal{U})}=\frac{\mathbf{i}}{2\pi r'}\frac{r'}{r}dz^t\{ (T-\bar{T} )^{-1} \}^t A d\bar{z} \cdot I_{r'}.
\end{equation*}
Now, we define
\begin{equation*}
R:=\frac{\mathbf{i}}{2\pi }\frac{r'}{r}\{ (T-\bar{T} )^{-1} \}^t A,
\end{equation*}
namely,
\begin{equation*}
\Omega _{(r,A,\mu ,\mathcal{U})}=\frac{1}{r'}dz^t R d\bar{z} \cdot I_{r'}.
\end{equation*}
Then, the following lemma holds.

\begin{lemma}
The matrix $R$ is a real symmetric matrix of order $n$.
\end{lemma}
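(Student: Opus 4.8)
The plan is to verify separately that $R$ is real, i.e.\ $\bar{R}=R$, and that $R$ is symmetric, i.e.\ $R^t=R$; together these give the claim. Note first that $R$ is well defined, since $\mathrm{Im}\,T$ being positive definite forces $T-\bar{T}=2\mathbf{i}\,\mathrm{Im}\,T$ to be invertible. Throughout I would abbreviate $S:=T-\bar{T}$ and record the single structural fact that drives everything: $\bar{S}=\bar{T}-T=-S$, so that $S$, and hence $S^{-1}$, is purely imaginary, while $A$ is a real matrix because $A\in M(n;\mathbb{Z})$.

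For the reality, I would simply conjugate the defining expression $R=\frac{\mathbf{i}}{2\pi}(S^{-1})^t A$. Using $\bar{A}=A$ and $\overline{S^{-1}}=(\bar{S})^{-1}=-S^{-1}$, the conjugation of the prefactor $\mathbf{i}$ and the conjugation of $S^{-1}$ contribute two sign changes that cancel, yielding $\bar{R}=\frac{\mathbf{i}}{2\pi}(S^{-1})^t A=R$. This step is immediate and uses only the integrality of $A$ and the purely imaginary nature of $S$.

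The symmetry is the substantive step, and I expect it to be the main obstacle. First I would compute $R^t=\frac{\mathbf{i}}{2\pi}A^t S^{-1}$ and observe, by multiplying the sought identity $(S^{-1})^t A=A^t S^{-1}$ on the left by $S^t$ and on the right by $S$, that $R^t=R$ is equivalent to the matrix $AS$ being symmetric. It then remains to show that $AS=AT-A\bar{T}$ is symmetric. Here I would invoke the holomorphy hypothesis, which is in force for every object of $DG_{\check{T}^{2n}}$, namely $AT=(AT)^t$ (Proposition \ref{pro2.1}). To handle the second term, conjugating the identity $AT=(AT)^t=T^t A^t$ and using $\bar{A}=A$ gives $A\bar{T}=\bar{T}^t A^t=(A\bar{T})^t$, so $A\bar{T}$ is symmetric as well. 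Thus $AS$ is a difference of two symmetric matrices, hence symmetric, and the desired symmetry $R^t=R$ follows. The only delicate point is recognizing that the holomorphy condition $AT=(AT)^t$ propagates under complex conjugation to the conjugate matrix $\bar{T}$; once that is seen, both the reality and the symmetry reduce to elementary manipulations.
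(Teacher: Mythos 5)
Your proof is correct and follows essentially the same route as the paper: symmetry is reduced, via the holomorphy condition $AT=(AT)^t$ and its complex conjugate $A\bar T=(A\bar T)^t$, to the symmetry of expressions built from $AT$ and $A\bar T$, and reality follows from $A$ being real and $T-\bar T$ being purely imaginary (the paper phrases this as $R=\frac{1}{4\pi}(Y^{-1})^tA$). The only cosmetic difference is that you conjugate the desired identity by $S=T-\bar T$ to reduce to the symmetry of $AS$, whereas the paper inserts $(T-\bar T)(T-\bar T)^{-1}$ and observes that $\{(T-\bar T)^{-1}\}^tAT(T-\bar T)^{-1}$ and $\{(T-\bar T)^{-1}\}^tA\bar T(T-\bar T)^{-1}$ are each symmetric.
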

\begin{proof}
By a direct calculation,
\begin{align*}
R&=\frac{\mathbf{i}}{2\pi }\frac{r'}{r}\{ (T-\bar{T} )^{-1} \}^t A (T-\bar{T} ) (T-\bar{T} )^{-1}\\
  &=\frac{\mathbf{i}}{2\pi }\frac{r'}{r}\{ (T-\bar{T} )^{-1} \}^t AT(T-\bar{T} )^{-1} -\frac{\mathbf{i}}{2\pi }\frac{r'}{r}\{ (T-\bar{T} )^{-1} \}^t A\bar{T} (T-\bar{T} )^{-1},
\end{align*}
and since $AT=(AT)^t$ holds, it is clear that the two matrices
\begin{equation*}
\frac{\mathbf{i}}{2\pi }\frac{r'}{r}\{ (T-\bar{T} )^{-1} \}^t AT(T-\bar{T} )^{-1},\ \frac{\mathbf{i}}{2\pi }\frac{r'}{r}\{ (T-\bar{T} )^{-1} \}^t A\bar{T} (T-\bar{T} )^{-1}
\end{equation*}
are symmetric. Hence, $R$ is a symmetric matrix. Furthermore, when we decompose $T=X+\mathbf{i}Y$ with $X:=\mathrm{Re}T$, $Y:=\mathrm{Im}T$, one has
\begin{equation*}
R=\frac{1}{4\pi }\frac{r'}{r}(Y^{-1})^t A.
\end{equation*}
This relation indicates $R\in M(n;\mathbb{R})$.
\end{proof}
\begin{rem}
Although the matrix $R$ is defined by using the matrix $\frac{r'}{r}A$, each component of the matrix $\frac{r'}{r}A$ is an integer. Actually, this matrix $\frac{r'}{r}A\in M(n;\mathbb{Z})$ corresponds to the 1-st Chern class of $E_{(r,A,\mu,\mathcal{U})}$.
\end{rem}
By using this real symmetric matrix $R=(R_{ij})$ of order $n$, we define a Hermitian bilinear form $\mathcal{R} : \mathbb{C}^n\times \mathbb{C}^n\rightarrow \mathbb{C}$ by
\begin{equation*}
\mathcal{R}(z,w):=\sum_{i,j=1}^n R_{ij} z_i \bar{w}_j,
\end{equation*}
where $z=(z_1,\cdots,z_n)^t$, $w=(w_1,\cdots,w_n)^t$. Then, the following propositions hold.
\begin{proposition}\label{pro3.4}
For $\gamma _1,\cdots,\gamma _n$ and $\gamma '_1,\cdots,\gamma '_n$, $\mathrm{Im}\mathcal{R}(\gamma _j,\gamma _k)=0$, $\mathrm{Im}\mathcal{R}(\gamma '_j,\gamma '_k)=0$, where $j,k=1,\cdots,n$.
\end{proposition}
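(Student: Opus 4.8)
The plan is to compute $\mathrm{Im}\,\mathcal{R}(\gamma_j,\gamma_k)$ and $\mathrm{Im}\,\mathcal{R}(\gamma'_j,\gamma'_k)$ directly from the defining formula $\mathcal{R}(z,w)=\sum_{i,j} R_{ij}z_i\bar w_j$, using the two crucial facts established just above: that $R$ is a \emph{real symmetric} matrix of order $n$ (the previous lemma), and the explicit shape of the lattice generators $\gamma_j,\gamma'_j$. First I would recall that $\gamma_k=(0,\dots,2\pi,\dots,0)^t$ has real entries, so the components $(\gamma_k)_i=2\pi\delta_{ik}$ are real. For the first assertion, I would write $\mathcal{R}(\gamma_j,\gamma_k)=\sum_{p,q}R_{pq}(\gamma_j)_p\overline{(\gamma_k)_q}=(2\pi)^2 R_{jk}$. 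Since $R\in M(n;\mathbb{R})$, this quantity is a real number, whence $\mathrm{Im}\,\mathcal{R}(\gamma_j,\gamma_k)=0$ immediately.

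For the second assertion the computation is the genuinely substantive part, because $\gamma'_k=(2\pi t_{1k},\dots,2\pi t_{nk})^t$ has \emph{complex} entries, so $\mathcal{R}(\gamma'_j,\gamma'_k)$ need not be real term by term. Writing $(\gamma'_j)_p=2\pi t_{pj}$ and $(\gamma'_k)_q=2\pi t_{qk}$, I would expand
\begin{equation*}
\mathcal{R}(\gamma'_j,\gamma'_k)=(2\pi)^2\sum_{p,q=1}^n R_{pq}\, t_{pj}\,\overline{t_{qk}}=(2\pi)^2\,(T^t R \bar T)_{kj}^{\phantom{t}}\cdot\text{(index bookkeeping)}.
\end{equation*}
In matrix form this is, up to the constant $(2\pi)^2$, a fixed entry of $T^t R\bar T$ (equivalently $\bar T^{\,t} R\,T$ after the appropriate transpose), and the imaginary part in question is the $(j,k)$ versus $(k,j)$ antisymmetric piece. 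The key step is to show this matrix $T^t R\bar T$ is Hermitian, so that its imaginary part is antisymmetric and, more to the point, that the relevant combination producing $\mathrm{Im}\,\mathcal{R}(\gamma'_j,\gamma'_k)$ vanishes. Here I would substitute the closed expression $R=\tfrac{1}{4\pi}(Y^{-1})^t A$ from the lemma, and invoke the symmetry hypothesis $AT=(AT)^t$ together with $Y=\mathrm{Im}\,T$, $Y=Y^t$.

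I expect the main obstacle to be precisely this algebraic simplification of $T^t R\bar T$: one must check that its imaginary part, once $R$ and the symmetry of $AT$ are inserted, is genuinely symmetric in $(j,k)$, so that $\mathrm{Im}\,\mathcal{R}(\gamma'_j,\gamma'_k)=\mathrm{Im}\,\mathcal{R}(\gamma'_k,\gamma'_j)$; combined with the Hermitian property $\mathcal{R}(w,z)=\overline{\mathcal{R}(z,w)}$, which forces $\mathrm{Im}\,\mathcal{R}(\gamma'_j,\gamma'_k)=-\mathrm{Im}\,\mathcal{R}(\gamma'_k,\gamma'_j)$, this yields vanishing. In other words, the strategy is to exhibit $\mathrm{Im}\,\mathcal{R}(\gamma'_\bullet,\gamma'_\bullet)$ as a matrix that is forced to be both symmetric (from $AT=(AT)^t$) and antisymmetric (from Hermiticity of $\mathcal{R}$), and conclude it is zero. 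The first assertion is essentially immediate from $R$ being real; the care lies entirely in tracking the conjugates and transposes in the $\gamma'$ computation without losing the cancellation that the hypothesis $AT=(AT)^t$ provides.
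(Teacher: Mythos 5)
Your proposal is correct and follows essentially the same route as the paper: both reduce the second assertion to the matrix identity $\mathcal{R}(\gamma'_j,\gamma'_k)=4\pi^2(T^tR\bar T)_{jk}$, substitute $R=\tfrac{1}{4\pi}(Y^{-1})^tA$ with $T=X+\mathbf{i}Y$, and use $AT=(AT)^t$ (i.e.\ $AX=(AX)^t$ and $AY=(AY)^t$) to kill the imaginary part. The only cosmetic difference is that you package the final cancellation as ``$\mathrm{Im}(T^tR\bar T)$ is both antisymmetric (Hermiticity) and symmetric (from $AT=(AT)^t$), hence zero,'' whereas the paper just computes $\mathrm{Im}(4\pi^2T^tR\bar T)=\pi(AX-X^t(Y^{-1})^tAY)=\pi(AX-AX)=O$ directly; the underlying computation is identical.
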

\begin{proof}
By definition, $\mathcal{R}(\gamma _j,\gamma _k)=4\pi ^2R_{jk}$, where $R_{jk}\in \mathbb{R}$, so $\mathrm{Im}\mathcal{R}(\gamma _j,\gamma _k)=0$. On the other hand, we see $\mathcal{R}(\gamma '_j,\gamma '_k)=4\pi ^2(T^t R \bar{T} )_{jk}$, so for $T=X+\mathbf{i}Y$, it turns out to be
\begin{align*}
4\pi ^2 T^t R \bar{T} &=4\pi ^2 (X^t+\mathbf{i}Y^t)\cdot \frac{1}{4\pi }\frac{r'}{r}(Y^{-1})^t A\cdot (X-\mathbf{i}Y)\\
                              &=\pi \frac{r'}{r}\{ X^t (Y^{-1})^t AX+AY+\mathbf{i}(AX-X^t (Y^{-1})^t AY) \}.
\end{align*}
Thus, 
\begin{align*}
\mathrm{Im}\mathcal{R}(\gamma '_j,\gamma '_k)&=\left( \pi \frac{r'}{r}(AX-X^t (Y^{-1})^t AY) \right)_{jk}\\
                                                                  &=\left( \pi \frac{r'}{r}(AX-AX) \right)_{jk}\\
                                                                  &=O_{jk}.
\end{align*}
Here, the second equality follows from $AT=(AT)^t$.
\end{proof}
\begin{proposition}\label{pro3.5}
For $\gamma _1,\cdots,\gamma _n$ and $\gamma '_1,\cdots,\gamma '_n$, $\mathrm{Im}\mathcal{R}(\gamma _j,\gamma' _k)=-\pi \frac{r'}{r}a_{kj}$, $\mathrm{Im}\mathcal{R}(\gamma '_k,\gamma _j)=\pi \frac{r'}{r}a_{kj}$, where $j,k=1,\cdots,n$.
\end{proposition}
\begin{proof}
First, we prove $\mathrm{Im}R\bar{T} =-\frac{1}{4\pi }\frac{r'}{r}A^t$. For $T=X+\mathbf{i}Y$, 
\begin{equation*}
R\bar{T} =\frac{1}{4\pi } \frac{r'}{r}(Y^{-1})^t AX-\frac{\mathbf{i}}{4\pi }\frac{r'}{r}(Y^{-1})^t AY,
\end{equation*}
so we see
\begin{equation*}
\mathrm{Im}R\bar{T} =-\frac{1}{4\pi }\frac{r'}{r}(Y^{-1})^t AY=-\frac{1}{4\pi }\frac{r'}{r}A^t.
\end{equation*}
Here, we used $AT=(AT)^t$. Similarly, we can also prove 
\begin{equation*}
\mathrm{Im}\bar{R} T=\frac{1}{4\pi }\frac{r'}{r}A^t.
\end{equation*}
On the other hand, the relations 
\begin{equation*}
\mathcal{R}(\gamma _j,\gamma '_k)=(4\pi ^2R\bar{T} )_{jk},\ \mathcal{R}(\gamma '_k,\gamma _j)=(4\pi ^2\bar{R} T)_{jk}
\end{equation*}
hold. Thus, by using $\mathrm{Im}R\bar{T} =-\frac{1}{4\pi }\frac{r'}{r}A^t$ and $\mathrm{Im}\bar{R} T=\frac{1}{4\pi }\frac{r'}{r}A^t$, we obtain 
\begin{equation*}
\mathrm{Im}\mathcal{R}(\gamma _j,\gamma' _k)=-\pi \frac{r'}{r}a_{kj},\ \mathrm{Im}\mathcal{R}(\gamma '_k,\gamma _j)=\pi \frac{r'}{r}a_{kj}.
\end{equation*}
\end{proof}
Now, we consider a projectively flat bundle $\mathcal{E}_{(r,A,\mu ,\mathcal{U})}\rightarrow T^{2n}_{J=T}$ of rank $r'$ whose factor of automorphy $j : L\times \mathbb{C}^n\rightarrow GL(r';\mathbb{C})$ and connection $\tilde{\nabla} _{(r,A,\mu ,\mathcal{U})}=d+\tilde{\omega } _{(r,A,\mu ,\mathcal{U})}$ are expressed locally as follows :
\begin{align*}
&j(\gamma ,z)=U(\gamma )\mathrm{exp} \left\{ \frac{1}{r'}\mathcal{R}(z,\gamma )+\frac{1}{2r'}\mathcal{R}(\gamma ,\gamma ) \right\},\\
&\tilde{\omega } _{(r,A,\mu ,\mathcal{U})}=-\frac{1}{r'}dz^t R \bar{z} \cdot I_{r'}+\frac{\mathbf{i}}{2\pi r}\bar{\mu }^t(T-\bar{T})^{-1}dz\cdot I_{r'}-\frac{\mathbf{i}}{2\pi r} \mu ^t (T-\bar{T} )^{-1}dz\cdot I_{r'}.
\end{align*}
Here, $U(\gamma _j),U(\gamma '_k)\in U(r')$ ($j,k=1,\cdots,n$) satisfy the relations 
\begin{align}
&U(\gamma _j)U(\gamma _k)=U(\gamma _k)U(\gamma _j), \label{cc1} \\ 
&U(\gamma '_j)U(\gamma '_k)=U(\gamma '_k)U(\gamma '_j), \label{cc2} \\
&\zeta ^{-a_{kj}}U(\gamma '_k)U(\gamma _j)=U(\gamma _j)U(\gamma '_k). \label{cc3}
\end{align}
Note that these relations are equivalent to the cocycle condition of $E_{(r,A,\mu ,\mathcal{U})}$. Therefore, we can denote
\begin{equation*}
\mathcal{U}=\Bigl \{ U(\gamma _j), U(\gamma '_k)\in U(r') \ | \ (\ref{cc1}), \ (\ref{cc2}), \ (\ref{cc3}), \ j,k=1,\cdots, n \Bigr \}.
\end{equation*}
The purpose of this section is to interpret holomorphic vector bundles $E_{(r,A,\mu,\mathcal{U})}$ in the language of factors of automorphy, namely, to prove $E_{(r,A,\mu ,\mathcal{U})}\cong \mathcal{E}_{(r,A,\mu ,\mathcal{U})}$ (Theorem \ref{theo3.6}). It is clear that the curvature form $\tilde{\Omega }_{(r,A,\mu ,\mathcal{U})}$ of $\mathcal{E}_{(r,A,\mu ,\mathcal{U})}$ is expressed locally as
\begin{equation*}
\tilde{\Omega } _{(r,A,\mu ,\mathcal{U})}=\frac{1}{r'}dz^t R d\bar{z} \cdot I_{r'}.
\end{equation*}
Hence, we fix $r$, $A$, $\mu $ (note that $r'$ is uniquely defined by using $r$ and $A$), and by comparing the definition of $E_{(r,A,\mu ,\mathcal{U})}$ with the definition of $\mathcal{E}_{(r,A,\mu ,\mathcal{U})}$, we see that the cardinality of the set $\{E_{(r,A,\mu ,\mathcal{U})}\}$ is equal to the cardinality of the set $\{\mathcal{E}_{(r,A,\mu ,\mathcal{U})}\}$. Thus, we expect that there exists an isomorphism $\Psi : E_{(r,A,\mu ,\mathcal{U})} \stackrel{\sim}{\rightarrow} \mathcal{E}_{(r,A,\mu ,\mathcal{U})}$ which gives a correspondence between $\{ E_{(r,A,\mu ,\mathcal{U})} \}$ and $\{ \mathcal{E}_{(r,A,\mu ,\mathcal{U})} \}$. Actually, the following theorem holds, and this is the main theorem in this section.
\begin{theo} \label{theo3.6}
One has $E_{(r,A,\mu ,\mathcal{U})}\cong \mathcal{E}_{(r,A,\mu ,\mathcal{U})}$, where an isomorphism $\Psi : E_{(r,A,\mu ,\mathcal{U})} \stackrel{\sim }{\rightarrow } \mathcal{E}_{(r,A,\mu ,\mathcal{U})}$ is expressed locally as 
\begin{align*}
\Psi (z,\bar{z} )=&\mathrm{exp} \biggl\{ \frac{\mathbf{i}}{4\pi r'}z^t \mathcal{A} z+\frac{\mathbf{i}}{4\pi r'}\bar{z} ^t \bar{\mathcal{A}} \bar{z} -\frac{\mathbf{i}}{2\pi r'}z^t \mathcal{A}\bar{z} -\frac{\mathbf{i}}{2\pi r}z^t \{ (T-\bar{T})^{-1} \}^t \bar{\mu } \\
&+\frac{\mathbf{i}}{2\pi r}\bar{z} ^t \{ (T-\bar{T} )^{-1} \}^t \mu \biggr\}\cdot I_{r'},
\end{align*}
\hspace{4.4mm} $\mathcal{A}:=\frac{r'}{r}\{ (T-\bar{T} )^{-1} \}^t \bar{T} ^t A^t (T-\bar{T} )^{-1}$.
\end{theo}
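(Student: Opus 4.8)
The plan is to realize the desired isomorphism as a gauge transformation on the universal covering space $\mathbb{C}^n$ and to reduce the statement to two compatibility checks: compatibility with the gluing data of the two bundles, and compatibility with their connections. Since the proposed $\Psi$ is a nonvanishing scalar function times $I_r$, it is automatically invertible and commutes with the (scalar) connection forms and with every matrix in $\mathcal{U}$, which simplifies all the conjugations below. Note that $\Psi$ is genuinely non-holomorphic: it involves $\bar{z}^t\mathcal{A}\bar{z}$ and $z^t\mathcal{A}\bar{z}$, and this is exactly what is needed, because the transition functions of $E_{(r,A,\mu ,\mathcal{U})}$ contain the non-holomorphic factor $e^{\frac{\mathbf{i}}{r}a_j y}$ (with $y=(T-\bar{T})^{-1}(z-\bar{z})$), whereas the factor of automorphy $j(\gamma ,z)=U(\gamma )\mathrm{exp}\{\frac{1}{r}\mathcal{R}(z,\gamma )+\frac{1}{2r}\mathcal{R}(\gamma ,\gamma )\}$ of $\mathcal{E}_{(r,A,\mu ,\mathcal{U})}$ is holomorphic in $z$; the quadratic exponent of $\Psi$ is precisely the frame change converting one into the other.

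First I would read off the monodromy of $E_{(r,A,\mu ,\mathcal{U})}$ along the lattice generators from its transition functions. Composing the nontrivial seam transitions, going once around in the $x_j$-direction (applying $\gamma _j=2\pi e_j$) multiplies a section by $e^{\frac{\mathbf{i}}{r}a_j y}V_j$, while going once around in the $y_k$-direction (applying $\gamma '_k=2\pi Te_k$) multiplies it by $U_k$; denote the resulting factor of automorphy by $j_E$, and that of $\mathcal{E}_{(r,A,\mu ,\mathcal{U})}$ by $j_{\mathcal{E}}$. An isomorphism $\Psi :E_{(r,A,\mu ,\mathcal{U})}\to \mathcal{E}_{(r,A,\mu ,\mathcal{U})}$ respecting the holomorphic structures is then the same as a map $\Psi :\mathbb{C}^n\to GL(r;\mathbb{C})$ satisfying the intertwining relation
\begin{equation*}
\Psi (z+\gamma )\,j_E(\gamma ,z)=j_{\mathcal{E}}(\gamma ,z)\,\Psi (z),\qquad \gamma \in L,
\end{equation*}
which, since $L$ is generated by $\gamma _1,\dots ,\gamma _n,\gamma '_1,\dots ,\gamma '_n$, need only be checked on these $2n$ generators.

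Next I would substitute the proposed $\Psi $ into the intertwining relation for each generator. For $\gamma _j$ one has $\mathcal{R}(z,\gamma _j)=2\pi z^t Re_j$ and $\mathcal{R}(\gamma _j,\gamma _j)=4\pi ^2R_{jj}$, while for $\gamma '_k$ one has $\bar{\gamma }'_k=2\pi \bar{T} e_k$, so $\mathcal{R}(z,\gamma '_k)=2\pi z^t R\bar{T} e_k$ and $\mathcal{R}(\gamma '_k,\gamma '_k)=4\pi ^2(T^t R\bar{T} )_{kk}$. The quadratic exponent of $\Psi $ is designed so that, after the shift $z\mapsto z+\gamma $, the $z$- and $\bar{z}$-dependent contributions on the two sides cancel; using $R=\frac{1}{4\pi }(Y^{-1})^t A$, the symmetry of $\mathcal{A}$, and $AT=(AT)^t$, what remains on each generator is a constant scalar $c_j(r,A,\mu ,T)$, resp. $c'_k(r,A,\mu ,T)$, times the original matrix, which forces $U(\gamma _j)=c_j V_j$ and $U(\gamma '_k)=c'_k U_k$. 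Since $c_j$ and $c'_k$ are scalars they cancel out of every commutator, so $\{ U(\gamma _j),U(\gamma '_k)\}$ obeys exactly the relations obeyed by $\{ V_j,U_k\}$; this is where Proposition \ref{pro3.4} and Proposition \ref{pro3.5} enter, identifying those relations (via the values of $\mathrm{Im}\mathcal{R}$ on the lattice generators) with the cocycle conditions defining $\mathcal{U}$ and confirming that both bundles are parametrized by the same $\mathcal{U}$. Separately, I would verify the connection compatibility $\tilde{\omega }_{(r,A,\mu ,\mathcal{U})}=\omega _{(r,A,\mu ,\mathcal{U})}+\Psi ^{-1}d\Psi $ by differentiating the scalar exponent of $\Psi $ and comparing the two sides after converting between the $(x,y)$- and $(z,\bar{z})$-expressions; this upgrades the bundle isomorphism to an isomorphism in $DG_{\check{T}^{2n}}$.

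I expect the main obstacle to be the generator $\gamma '_k$: because $\bar{\gamma }'_k=2\pi \bar{T} e_k$ is not a real vector, the factor of automorphy $j_{\mathcal{E}}(\gamma '_k,z)$ brings in $\bar{T}$-dependent and mixed $z\bar{z}$-terms that must be absorbed entirely by the quadratic form $\mathcal{A}=\{ (T-\bar{T} )^{-1}\}^t \bar{T}^t A^t (T-\bar{T} )^{-1}$; keeping track of these terms and showing, via $AT=(AT)^t$, that they collapse to a single scalar is the delicate bookkeeping on which the whole construction hinges. Once both compatibility checks are in place, invertibility is immediate, since $\Psi $ is an exponential times $I_r$, and the proof is complete.
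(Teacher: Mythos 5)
Your proposal is correct and follows essentially the same route as the paper: the paper likewise treats $\Psi$ as a scalar gauge transformation on the covering space, fixes it by the connection-compatibility equation $\tilde{\nabla}_{(r,A,\mu ,\mathcal{U})}\Psi -\Psi \nabla_{(r,A,\mu ,\mathcal{U})}=O$ (using $\frac{\mathbf{i}}{2\pi }(\bar{\mathcal{A}}-\mathcal{A})=R$), and then verifies the intertwining of the transition data on the generators $\gamma _j,\gamma '_k$, reading off $U(\gamma _j)=c_jV_j$ and $U(\gamma '_k)=c'_kU_k$ exactly as you describe. The ``delicate bookkeeping'' you anticipate for $\gamma '_k$ is handled in the paper via the identities $\mathcal{A}T=\overline{\mathcal{A}T}$ and $\mathcal{A}(T-\bar{T})=-2\pi \mathbf{i}R\bar{T}$, both consequences of $AT=(AT)^t$.
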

\begin{proof}
Note that $\mathcal{A}$ is a symmetric matrix because $AT=(AT)^t$. We construct an isomorphism $\Psi : E_{(r,A,\mu ,\mathcal{U})}\rightarrow \mathcal{E}_{(r,A,\mu ,\mathcal{U})}$ explicitly such that its local expression is 
\begin{equation*}
\Psi (z,\bar{z} )=\psi (z,\bar{z} )\cdot I_{r'},
\end{equation*}
where $\psi (z,\bar{z} )$ is a function defined locally. By solving the differential equation
\begin{equation*}
\tilde{\nabla} _{(r,A,\mu ,\mathcal{U})}\Psi (z,\bar{z} )=\Psi (z,\bar{z} )\nabla_{(r,A,\mu ,\mathcal{U})}, 
\end{equation*}
we obtain the solution
\begin{align*}
\psi (z,\bar{z} )=&c\cdot \mathrm{exp} \biggl\{ \frac{\mathbf{i}}{4\pi r'}z^t \mathcal{A} z+\frac{\mathbf{i}}{4\pi r'}\bar{z} ^t \bar{\mathcal{A}} \bar{z} -\frac{\mathbf{i}}{2\pi r'}z^t \mathcal{A}\bar{z} -\frac{\mathbf{i}}{2\pi r}z^t\{ (T-\bar{T})^{-1} \}^t \bar{\mu } \\
&+\frac{\mathbf{i}}{2\pi r}\bar{z} ^t \{ (T-\bar{T} )^{-1} \}^t \mu \biggr\},
\end{align*}
where $c$ is an arbitrary constant, so by setting $c=1$, we have
\begin{align*}
\Psi (z,\bar{z} )=&\mathrm{exp} \biggl\{ \frac{\mathbf{i}}{4\pi r'}z^t \mathcal{A} z+\frac{\mathbf{i}}{4\pi r'}\bar{z} ^t \bar{\mathcal{A}} \bar{z} -\frac{\mathbf{i}}{2\pi r'}z^t \mathcal{A}\bar{z} -\frac{\mathbf{i}}{2\pi r}z^t \{ (T-\bar{T})^{-1} \}^t \bar{\mu } \\
&+\frac{\mathbf{i}}{2\pi r}\bar{z} ^t \{ (T-\bar{T} )^{-1} \}^t \mu \biggr\}\cdot I_{r'}.
\end{align*}
By using this $\Psi : E_{(r,A,\mu ,\mathcal{U})}\rightarrow \mathcal{E}_{(r,A,\mu ,\mathcal{U})}$, we transform the transition functions of $E_{(r,A,\mu ,\mathcal{U})}$. We can verify that the relation
\begin{equation}
\frac{\mathbf{i}}{2\pi }(\bar{\mathcal{A}} -\mathcal{A})=R \label{t2}
\end{equation}
holds as follows. By a direct calculation, we see
\begin{align*}
&\frac{\mathbf{i}}{2\pi }\bar{\mathcal{A}} -R\\
&=\frac{\mathbf{i}}{2\pi }\frac{r'}{r}\{ (T-\bar{T} )^{-1} \}^t T^t A^t (T-\bar{T} )^{-1}-\frac{\mathbf{i}}{2\pi } \frac{r'}{r}\{ (T-\bar{T} )^{-1} \}^t (T-\bar{T} )^t A^t (T-\bar{T} )^{-1} \\
&=\frac{\mathbf{i}}{2\pi }\frac{r'}{r}\{ (T-\bar{T} )^{-1} \}^t \bar{T}^t A^t (T-\bar{T} )^{-1}\\     
&=\frac{\mathbf{i}}{2\pi }\mathcal{A},                                              
\end{align*}
so one has 
\begin{equation*}
\frac{\mathbf{i}}{2\pi }(\bar{\mathcal{A}} -\mathcal{A})=R. 
\end{equation*}
Since we can regard
\begin{equation*}
e^{\frac{\mathbf{i}}{r}a_j y}=e^{\frac{\mathbf{i}}{r}a_j (T-\bar{T} )^{-1} z-\frac{\mathbf{i}}{r}a_j (T-\bar{T} )^{-1} \bar{z}},  
\end{equation*}
we calculate the formula
\begin{equation}
\Bigl( \Psi (z+\gamma _j ,\bar{z}+\gamma _j ) \Bigr) \left( e^{\frac{\mathbf{i}}{r}a_j (T-\bar{T} )^{-1} z-\frac{\mathbf{i}}{r}a_j (T-\bar{T} )^{-1} \bar{z}}V_j \right) \Bigl( \Psi ^{-1}(z,\bar{z} ) \Bigr), \label{t3}
\end{equation}
where $j=1,\cdots,n$. We set 
\begin{equation*}
(\bar{\mathcal{A}} -\mathcal{A})_j :=(\bar{\mathcal{A}} _{1j}-\mathcal{A}_{1j},\cdots,\bar{\mathcal{A}} _{nj}-\mathcal{A}_{nj}),\ R_j:=(R_{1j},\cdots, R_{nj}).
\end{equation*}
By using the identity (\ref{t2}), the formula (\ref{t3}) turns out to be
\begin{align*}
&\mathrm{exp} \bigg\{ \frac{\mathbf{i}}{r'}(\bar{\mathcal{A}} -\mathcal{A})_j \bar{z} +\frac{\pi \mathbf{i}}{r'}(\bar{\mathcal{A}}-\mathcal{A})_{jj} -\frac{\mathbf{i}}{r}(\{ (T-\bar{T})^{-1} \}^t \bar{\mu } )_j +\frac{\mathbf{i}}{r}(\{ (T-\bar{T} )^{-1} \}^t \mu )_j \\
&+\frac{\mathbf{i}}{r}a_j (T-\bar{T})^{-1} z-\frac{\mathbf{i}}{r}a_j (T-\bar{T} )^{-1}\bar{z} \bigg\} V_j \\
&=\mathrm{exp} \left\{ \frac{2\pi }{r'}R_j z+\frac{2\pi ^2}{r'}R_{jj}+\frac{\mathbf{i}}{r}q_j \right\} V_j \\
&=\mathrm{exp} \left\{ \frac{\mathbf{i}}{r}q_j \right\}V_j \ \mathrm{exp} \left\{\frac{1}{r'}\mathcal{R}(z,\gamma _j)+\frac{1}{2r'}\mathcal{R}(\gamma _j ,\gamma _j) \right\}.
\end{align*}
In particular, $\mathrm{exp} \left\{ \frac{\mathbf{i}}{r}q_j \right\}$ is a purely imaginary number, and this fact indicates
\begin{equation*}
U(\gamma_j):=\mathrm{exp} \left\{ \frac{\mathbf{i}}{r}q_j \right\}V_j \in U(r').
\end{equation*}
Similarly, we also calculate the formula
\begin{equation}
\Bigl(\Psi (z+\gamma '_k,\bar{z} +\bar{\gamma '_k} )\Bigr) \Bigl( U_k \Bigr) \Bigl( \Psi ^{-1}(z,\bar{z} ) \Bigr), \label{t4}
\end{equation}
where $k=1,\cdots,n$. In order to calculate the formula (\ref{t4}), we prove the relations
\begin{gather}
\mathcal{A}T=\overline{\mathcal{A}T}, \label{t5} \\
\mathcal{A}(T-\bar{T} )=-2\pi \mathbf{i}R\bar{T}. \label{t6}
\end{gather}
We can show the identity (\ref{t5}) as follows. For $T=X+\mathbf{i}Y$, 
\begin{align*}
\mathcal{A}T&=\frac{r'}{r}\{ (T-\bar{T} )^{-1} \}^t \bar{T}^t A^t (T-\bar{T} )^{-1} T\\
                  &=-\frac{1}{4}\frac{r'}{r}(Y^{-1})^t X^t A^t Y^{-1} X-\frac{1}{4}\frac{r'}{r}A^t -\frac{\mathbf{i}}{4}\frac{r'}{r}((Y^{-1})^t X^t A^t -A^t Y^{-1} X),
\end{align*}
and since $AT=(AT)^t$ holds,
\begin{equation*}
\mathrm{Im}\mathcal{A}T=-\frac{1}{4}\frac{r'}{r}((Y^{-1})^t X^t A^t -A^t Y^{-1}X)=O.
\end{equation*}
This implies $\mathcal{A}T=\overline{\mathcal{A}T}$. Furthermore, by a direct calculation,
\begin{align*}
\mathcal{A}(T-\bar{T} )&=\frac{r'}{r}\{ (T-\bar{T} )^{-1} \}^t \bar{T}^t A^t (T-\bar{T} )^{-1}(T-\bar{T} )\\
                                &=\frac{r'}{r}\{ (T-\bar{T} )^{-1} \}^t A\bar{T} \\
                                &=-2\pi \mathbf{i}R\bar{T},
\end{align*}
so we obtain the identity (\ref{t6}). Now, we calculate the formula (\ref{t4}). We set 
\begin{gather*} 
(\mathcal{A}(T-\bar{T} ))_k :=((\mathcal{A}(T-\bar{T} ))_{1k},\cdots, (\mathcal{A}(T-\bar{T} ))_{nk} ),\\
(R\bar{T})_k :=((R\bar{T} )_{1k},\cdots, (R\bar{T} )_{nk}).
\end{gather*}
By using the identities (\ref{t5}), (\ref{t6}), the formula (\ref{t4}) turns out to be
\begin{align*}
&\mathrm{exp} \bigg\{ \frac{\mathbf{i}}{r'}(\mathcal{A}(T-\bar{T} ))_k z+\frac{\pi \mathbf{i}}{r'}((\mathcal{A}T)^t T)_{kk}+\frac{\pi \mathbf{i}}{r'}((\overline{\mathcal{A}T})^t \bar{T})_{kk}-\frac{2\pi \mathbf{i}}{r'}((\mathcal{A}T)^t \bar{T} )_{kk}\\
&-\frac{\mathbf{i}}{r}( \bar{\mu }^t (T-\bar{T})^{-1} T)_k +\frac{\mathbf{i}}{r}(\mu ^t (T-\bar{T} )^{-1} \bar{T} )_k \bigg\} U_k \\
&=\mathrm{exp} \left\{ \frac{\mathbf{i}}{r'}(\mathcal{A}(T-\bar{T} ))_k z+\frac{\pi \mathbf{i}}{r'}(T^t \mathcal{A}(T-\bar{T} ))_{kk} -\frac{\mathbf{i}}{r}p_k \right\} U_k \\
&=\mathrm{exp} \left\{ \frac{2\pi }{r'}(R\bar{T} )_k z+\frac{2\pi ^2}{r'}(T^t R\bar{T} )_{kk} -\frac{\mathbf{i}}{r}p_k \right\} U_k \\
&=\mathrm{exp} \left\{ \frac{2\pi }{r'}(R\bar{T} )_k z+\frac{2\pi ^2}{r'}(\bar{T}^t RT )_{kk} -\frac{\mathbf{i}}{r}p_k \right\} U_k \\
&=\mathrm{exp} \left\{ -\frac{\mathbf{i}}{r}p_k \right\}U_k \ \mathrm{exp} \left\{ \frac{1}{r'}\mathcal{R}(z,\gamma '_k)+\frac{1}{2r'}\mathcal{R}(\gamma '_k,\gamma '_k) \right\}.
\end{align*}
In particular, $\mathrm{exp} \left\{ -\frac{\mathbf{i}}{r}p_k \right\}$ is a purely imaginary number, and this fact indicates
\begin{equation*}
U(\gamma_k'):=\mathrm{exp} \left\{ -\frac{\mathbf{i}}{r}p_k \right\}U_k \in U(r').
\end{equation*}
Here, we remark that the matrices $U(\gamma_j)$, $U(\gamma_k')$ ($j$, $k=1,\cdots, n$) satisfy the relations (\ref{cc1}), (\ref{cc2}) and (\ref{cc3}) if and only if the matrices $V_j$, $U_k$ ($j$, $k=1,\cdots, n$) satisfy the cocycle condition
\begin{equation*}
V_jV_k=V_kV_j, \ U_jU_k=U_kU_j, \ \zeta^{-a_{kj}}U_kV_j=V_jU_k
\end{equation*}
of $E_{(r,A,\mu,\mathcal{U})}$. This completes the proof.
\end{proof}

\section{Exact triangles consisting of projectively flat bundles on $T^{2n}_{J=T}$}
The purpose of this section is to prove Theorem \ref{theo4.1} which plays an important role in section 5. In Theorem \ref{theo4.1}, we focus on exact triangles consisting of three simple projectively flat bundles $E_{(r,A,\mu,\mathcal{U})}$, $E_{(s,B,\nu,\mathcal{V})}$, $E_{(t,C,\eta,\mathcal{W})}\rightarrow T^{2n}_{J=T}$ and their shifts. We also give a geometric interpretation for such exact triangles by focusing on the dimension of intersections of the corresponding affine Lagrangian submanifolds in the last of this section.

Let us consider an exact triangle
\begin{align}
\begin{CD}
\cdots &@>>> E_{(r,A,\mu,\mathcal{U})} @>>> C(\psi) @>>> E_{(s,B,\nu,\mathcal{V})} \\
&@>\psi>> E_{(r,A,\mu,\mathcal{U})}[1] @>>> \cdots \label{triangle4.1}
\end{CD}
\end{align}
in $Tr(DG_{T^{2n}_{J=T}})$, where $\psi \in \mathrm{Ext}^1(E_{(s,B,\nu,\mathcal{V})}, E_{(r,A,\mu,\mathcal{U})})$ is a non-trivial morphism. We set
\begin{equation*}
\alpha :=\frac{1}{r}A-\frac{1}{s}B.
\end{equation*}
Since the non-triviality of $\psi \in \mathrm{Ext}^1(E_{(s,B,\nu,\mathcal{V})}, E_{(r,A,\mu,\mathcal{U})})$ implies the existence of an isomorphism $E_{(r,A,\mu,\mathcal{U})}\cong E_{(s,B,\nu,\mathcal{V})}$, i.e., $C(\psi)\cong 0$ in the case $\alpha =O$, we consider the case $\alpha \not =O$ only throughout this paper. Here, we give the following theorem. 
\begin{theo} \label{theo4.1}
In the exact triangle $(\ref{triangle4.1})$, we assume that there exists a holomorphic vector bundle $E_{(t,C,\eta, \mathcal{W})}$ such that $C(\psi)\cong E_{(t,C,\eta,\mathcal{W})}$. Then, we have
\begin{equation*}
\mathrm{rank}\hspace{0.5mm}\alpha =1.
\end{equation*}
\end{theo}
\begin{proof}
We define the 2-forms $\Omega '_{(r,A,\mu ,\mathcal{U})}$, $\Omega '_{(s,B,\nu ,\mathcal{V})}$, $\Omega '_{(t,C,\eta ,\mathcal{W})}$ by
\begin{align*}
&\Omega '_{(r,A,\mu ,\mathcal{U})}:=\frac{1}{4\pi ^2 r}dx^t A^t dy,\\
&\Omega '_{(s,B,\nu ,\mathcal{V})}:=\frac{1}{4\pi ^2 s}dx^t B^t dy,\\
&\Omega '_{(t,C,\eta ,\mathcal{W})}:=\frac{1}{4\pi ^2 t}dx^t C^t dy,
\end{align*}
respectively, namely,
\begin{align*}
&-\frac{1}{2\pi \mathbf{i}}\Omega _{(r,A,\mu ,\mathcal{U})}=\Omega '_{(r,A,\mu ,\mathcal{U})}\cdot I_{r'},\\
&-\frac{1}{2\pi \mathbf{i}}\Omega _{(s,B,\nu ,\mathcal{V})}=\Omega '_{(s,B,\nu ,\mathcal{V})}\cdot I_{s'},\\
&-\frac{1}{2\pi \mathbf{i}}\Omega _{(t,C,\eta ,\mathcal{W})}=\Omega '_{(t,C,\eta ,\mathcal{W})}\cdot I_{t'}.
\end{align*}
Since we assume $C(\psi )\cong E_{(t,C,\eta ,\mathcal{W})}$, one has $ch_i (C(\psi ))=ch_i (E_{(t,C,\eta ,\mathcal{W})})$, where $i=1,\cdots,n$ and $ch_i (E)$ denotes the $i$-th Chern character of a vector bundle $E$. In particular, 
\begin{equation*}
ch_i (C(\psi ))=ch_i (E_{(r,A,\mu ,\mathcal{U})})+ch_i (E_{(s,B,\nu ,\mathcal{V})}),
\end{equation*}
so $ch_i (C(\psi ))=ch_i (E_{(t,C,\eta ,\mathcal{W})})$ is equivalent to
\begin{equation}
ch_i (E_{(r,A,\mu ,\mathcal{U})})+ch_i (E_{(s,B,\nu ,\mathcal{V})})=ch_i (E_{(t,C,\eta ,\mathcal{W})}). \label{ch}
\end{equation}
Now we calculate $ch_i (C(\psi ))$, $ch_i (E_{(t,C,\eta ,\mathcal{W})})$ and consider the equality (\ref{ch}). It is clear that the equality (\ref{ch}) in the cases $i=0,1$ are equivalent to 
\begin{align}
&r'+s'=t', \label{c0} \\
&r'\Omega '_{(r,A,\mu ,\mathcal{U})}+s'\Omega '_{(s,B,\nu ,\mathcal{V})}=t'\Omega '_{(t,C,\eta ,\mathcal{W})}, \label{c1}
\end{align}
respectively. We consider the equality (\ref{ch}) in the case $i=2$. By a direct calculation, the equality (\ref{ch}) turns out to be
\begin{equation}
\frac{r'}{2}(\Omega '_{(r,A,\mu ,\mathcal{U})})^2 +\frac{s'}{2}(\Omega '_{(s,B,\nu ,\mathcal{V})})^2 =\frac{t'}{2}(\Omega '_{(t,C,\eta ,\mathcal{W})})^2, \label{c2}
\end{equation}
and we obtain the relation 
\begin{equation}
(r't'-r'^2)(\Omega '_{(r,A,\mu ,\mathcal{U})})^2 +(s't'-s'^2)(\Omega '_{(s,B,\nu ,\mathcal{V})})^2 =2r's'\Omega '_{(r,A,\mu ,\mathcal{U})}\wedge \Omega '_{(s,B,\nu ,\mathcal{V})} \label{c2'}
\end{equation}
by substituting the equality (\ref{c1}) into the equality (\ref{c2}). Furthermore, by substituting the equality (\ref{c0}) into the equality (\ref{c2'}), the equality (\ref{c2'}) turns out to be
\begin{equation*}
r's'(\Omega '_{(r,A,\mu ,\mathcal{U})})^2 +r's'(\Omega '_{(s,B,\nu ,\mathcal{V})})^2 =2r's'\Omega '_{(r,A,\mu ,\mathcal{U})}\wedge \Omega '_{(s,B,\nu ,\mathcal{V})},
\end{equation*}
and this relation is equivalent to
\begin{equation}
(\Omega '_{(r,A,\mu ,\mathcal{U})}-\Omega '_{(s,B,\nu ,\mathcal{V})})^2 =0. \label{c2''}
\end{equation}
In general, for $i\geq 3$, we obtain the equality 
\begin{align}
&\left( r'\sum_{k=1}^{i-1} \binom{i-1}{k} r'^{i-1-k}s'^k \right) \Biggl( \Omega '_{(r,A,\mu ,\mathcal{U})} \Biggr) ^i \notag \\
&+\left( s'\sum_{k=0}^{i-2} \binom{i-1}{k} r'^{i-1-k}s'^k \right) \Biggl( \Omega '_{(s,B,\nu ,\mathcal{V})} \Biggr) ^i \notag \\
&-\sum_{k=1}^{i-1} \binom{i}{k} \Biggl( r'\Omega '_{(r,A,\mu ,\mathcal{U})} \Biggr)^{i-k} \Biggl( s'\Omega '_{(s,B,\nu ,\mathcal{V})} \Biggr) ^k =0 \label{ci}
\end{align}
by expanding the equality (\ref{ch}). Note that the left hand side of the equality (\ref{ci}) can be factored as
\begin{align*}
&(\Omega '_{(r,A,\mu ,\mathcal{U})}-\Omega '_{(s,B,\nu ,\mathcal{V})})^2 \\
&\times \sum_{l=0}^{i-2} \Bigg\{ \Biggl( \sum_{k=1}^l (i-l-1) \binom{i-1}{k-1} r'^{i-k}s'^k +(l+1)\sum_{k=l+1}^{i-1} \binom{i-1}{k} r'^{i-k} s'^k \Biggr) \\
&\times ( \Omega '_{(r,A,\mu ,\mathcal{U})} ) ^{i-l-2} ( \Omega '_{(s,B,\nu ,\mathcal{V})} ) ^l \Bigg\}.
\end{align*}
Hence, when the equality (\ref{c2''}) holds, the equality (\ref{ci}) holds automatically. Moreover, by the definition of $\Omega '_{(r,A,\mu ,\mathcal{U})}$ and $\Omega '_{(s,B,\nu ,\mathcal{V})}$,
\begin{equation*}
\Omega '_{(r,A,\mu ,\mathcal{U})}-\Omega '_{(s,B,\nu ,\mathcal{V})}=\frac{1}{4\pi ^2} dx^t \left( \frac{1}{r}A^t -\frac{1}{s}B^t \right) dy=\frac{1}{4\pi ^2} dx^t \alpha ^t dy,
\end{equation*}
so by a direct calculation, one has 
\begin{align*}
&(\Omega '_{(r,A,\mu ,\mathcal{U})}-\Omega '_{(s,B,\nu ,\mathcal{V})})^2 \\
&=\frac{1}{8\pi ^4}\sum_{1\leq i<j\leq n,1\leq k<l\leq n}(\alpha _{ik}\alpha _{jl}-\alpha _{il}\alpha _{jk})dx_k \wedge dy_i \wedge dx_l \wedge dy_j.
\end{align*}
Thus, the equality (\ref{c2''}) is equivalent to 
\begin{equation}
\mathrm{det}\left( \begin{array}{ccc} \alpha _{ik} & \alpha _{il} \\ \alpha _{jk} & \alpha _{jl} \end{array} \right) = \alpha _{ik} \alpha _{jl}-\alpha _{il} \alpha _{jk}=0, \label{c2'''}
\end{equation}
where $1\leq i<j\leq n$, $1\leq k<l\leq n$. 

Now, in order to prove the statement of this theorem, we apply elementary row operations to the matrix $\alpha $. Since we assume $\alpha \not= O$, there exists an $\alpha_{ij}\not =0$. First, we multiply the first row of $\alpha $ by $\alpha _{ij}$ :
\begin{equation*}
\alpha \longrightarrow 
\alpha ':=
\left( \begin{array}{@{\,}cccccc@{\,}}
\alpha _{11}\alpha _{ij} & \ldots & \alpha _{1j}\alpha _{ij} & \ldots & \alpha _{1n}\alpha _{ij} \\
\vdots                     & \ddots & \vdots                     & \ddots & \vdots                      \\
\alpha _{i1}                & \ldots & \alpha _{ij}                & \ldots & \alpha _{in}                \\
\vdots                     & \ddots & \vdots                     & \ddots & \vdots                    \\
\alpha _{n1}                & \ldots & \alpha _{nj}               & \ldots & \alpha _{nn} \end{array} \right).
\end{equation*}
Next, we add the $i$-th row of $\alpha '$ multiplied by $-\alpha _{1j}$ to the first row of $\alpha '$ :
\begin{equation*}
\alpha ' \longrightarrow 
\alpha '':=
\left( \begin{array}{@{\,}cccccc@{\,}}
\alpha _{11}\alpha _{ij}-\alpha _{i1}\alpha _{1j} & \ldots & 0 & \ldots & \alpha _{1n}\alpha _{ij}-\alpha _{in}\alpha _{1j} \\
\vdots                     & \ddots & \vdots                     & \ddots & \vdots                     \\
\alpha _{i1}                & \ldots & \alpha _{ij}                & \ldots & \alpha _{in}                \\
\vdots                     & \ddots & \vdots                     & \ddots & \vdots                    \\
\alpha _{n1}                & \ldots & \alpha _{nj}               & \ldots & \alpha _{nn} \end{array} \right).
\end{equation*}
Then, by using the equality (\ref{c2'''}), we see that all components of the first row of $\alpha ''$ are zero, namely,
\begin{equation*}
\alpha ''=
\left( \begin{array}{@{\,}cccccc@{\,}}
0 & \ldots & 0 & \ldots & 0 \\
\vdots                     & \ddots & \vdots                     & \ddots & \vdots                     \\
\alpha _{i1}                & \ldots & \alpha _{ij}                & \ldots & \alpha _{in}                \\
\vdots                     & \ddots & \vdots                     & \ddots & \vdots                     \\
\alpha _{n1}                & \ldots & \alpha _{nj}               & \ldots & \alpha _{nn} \end{array} \right).
\end{equation*}
By applying elementary row operations to $\alpha ''$ similarly as above, $\alpha ''$ is transformed as follows finally :
\begin{equation*}
\left( \begin{array}{@{\,}cccccc@{\,}}
0 & \ldots & 0 & \ldots & 0 \\
\vdots                     & \ddots & \vdots                     & \ddots & \vdots                     \\
\alpha _{i1}                & \ldots & \alpha _{ij}                & \ldots & \alpha _{in}                \\
\vdots                     & \ddots & \vdots                     & \ddots & \vdots                     \\
0 & \ldots & 0 & \ldots & 0 \end{array} \right).
\end{equation*}
Thus, we can conclude that the relation
\begin{equation*}
\mathrm{rank}\hspace{0.5mm}\alpha =1
\end{equation*}
holds.
\end{proof}
We give a geometric interpretation for Theorem \ref{theo4.1} from the viewpoint of the homological mirror symmetry for $(T^{2n}_{J=T}, \check{T}^{2n}_{J=T})$. By using suitable parameters $\check{p}$, $\check{u}\in \mathbb{R}^n$, we can express the affine Lagrangian submanifolds which correspond to holomorphic vector bundles $E_{(r,A,\mu,\mathcal{U})}$, $E_{(s,B,\nu,\mathcal{V})}$ as $L_{(r,A,\check{p})}$, $L_{(s,B,\check{u})}$, respectively (cf. \cite[Theorem 5.1]{kazushi3}, Theorem \ref{bijection1}). Note that the non-triviality of
\begin{equation*}
\mathrm{Ext}^1(E_{(s,B,\nu,\mathcal{V})}, E_{(r,A,\mu,\mathcal{U})})
\end{equation*}
implies
\begin{equation*}
L_{(r,A,\check{p})}\cap L_{(s,B,\check{u})}\not =\emptyset
\end{equation*}
in the description of the homological mirror symmetry (see also the relations (\ref{munu}), (\ref{nueta}), and p.32, p.33 in \cite{D}). Then, the relation $\mathrm{rank}\hspace{0.5mm}\alpha =1$ in Theorem \ref{theo4.1} indicates 
\begin{equation*}
\mathrm{codim}(L_{(r,A,\check{p})}\cap L_{(s,B,\check{u})})=1.
\end{equation*}
For example, let us consider the case $n=1$, i.e., the case of elliptic curves $(T^2_{J=T}, \check{T}^2_{J=T})$. We focus on the exact triangle
\begin{align*}
\begin{CD}
\cdots &@>>> E_{(1,0,\mu,\mathcal{U})} @>>> C(\psi) @>>> E_{(1,1,\nu,\mathcal{V})} \\
&@>\psi>> E_{(1,0,\mu,\mathcal{U})}[1] @>>> \cdots 
\end{CD}
\end{align*}
in $Tr(DG_{T^2_{J=T}})$, where
\begin{equation*}
\mathcal{U}=\mathcal{V}=\Bigl\{ V_1 =U_1 =1 \in U(1) \Bigr\} .
\end{equation*}
Then, by \cite[Theorem 4.10]{kazushi}, $C(\psi )\cong E_{(2,1,\eta ,\mathcal{W})}$ if and only if $\eta \equiv \mu +\nu +\pi +\pi T\ (\mathrm{mod} \ 2\pi (\mathbb{Z}\oplus T\mathbb{Z}))$, where
\begin{equation*}
\mathcal{W}=\left\{ V_1 =\left( \begin{array}{ccc} 0&1\\1&0 \end{array} \right), U_1 =\left( \begin{array}{ccc} 1&0 \\ 0&-1 \end{array} \right) \in U(2) \right\},
\end{equation*}
and we can actually check that $\mathrm{codim}(L_{(1,0,\check{p})}\cap L_{(1,1,\check{u})})=1$ holds in this case. 

\section{An application}
In \cite{D}, as an application of Theorem \ref{theo4.1}, we studied exact triangles consisting of three simple projectively flat bundles $E_{(r,A,\mu,\mathcal{U})}$, $E_{(s,B,\nu,\mathcal{V})}$, $E_{(t,C,\eta,\mathcal{W})}\rightarrow T^{2n}_{J=T}$ ($n\geq 2$) and their shifts with the assumption $\mathrm{rank}\hspace{0.5mm}E_{(r,A,\mu,\mathcal{U})}=1$. In particular, the main result is given in \cite[Theorem 5.6]{D}. The purpose of this section is to extend \cite[Theorem 5.6]{D} to general settings (Theorem \ref{mainth}).

Also, in this section, we sometimes consider the holomorphic vector bundle of the form $E_{(r,aE_{ij},\mu,\mathcal{U})}\rightarrow T^{2n}_{J=T}$ of rank $r'$, where $r\in \mathbb{N}$, $a\in \mathbb{Z}$, and $\mu \in \mathbb{R}^n \oplus T^t \mathbb{R}^n$. When we consider such a holomorphic vector bundle, we can take the set 
\begin{equation*}
\mathcal{U}_{r'}:=\left\{ V_j=V, \ V_l=I_{r'}, \ U_i=U^{-\frac{r'}{r}a}, \ U_k=I_{r'} \in U(r') \ | \ l, k=1,\cdots, n, \ l\not =j, \ k\not =i \right\}
\end{equation*}
as an example of $\mathcal{U}$, where
\begin{equation*}
V:=\left( \begin{array}{cccc} 0\ \ 1&&\\&\ddots\ddots&\\&&1\\1&&0 \end{array} \right), \ U:=\left( \begin{array}{cccc} 1&&&\\&\zeta '&&\\&&\ddots&\\&&&(\zeta ')^{r'-1} \end{array} \right)\in U(r'), \ \zeta ':=e^{\frac{2\pi \mathbf{i}}{r'}}.
\end{equation*}
Throughout this section, we use the notation $\mathcal{U}_{r'}$ in this sense. Note that
\begin{equation*}
\mathcal{U}_1=\Bigl\{ V_1=\cdots =V_n=U_1=\cdots U_n=1 \in U(1) \Bigr\}.
\end{equation*}

\subsection{Previous work}
In this subsection, we recall the discussions in subsection 5.2 in \cite{D}. Roughly speaking, in subsection 5.2 in \cite{D}, we proved that an exact triangle consisting of three simple projectively flat bundles $E_{(1,A,\mu,\mathcal{U})}$, $E_{(s,B,\nu,\mathcal{V})}$, $E_{(t,C,\eta,\mathcal{W})}\rightarrow T^{2n}_{J=T}$ ($n\geq 2$) and their shifts is obtained as the pullback of an exact triangle defined on a suitable one-dimensional complex torus (\cite[Theorem 5.6]{D}).

Let us consider an exact triangle 
\begin{align}
\begin{CD}
\cdots &@>>> E_{(r,A,\mu,\mathcal{U})} @>>> C(\psi) @>>> E_{(s,B,\nu,\mathcal{V})} \\
&@>\psi>> E_{(r,A,\mu,\mathcal{U})}[1] @>>> \cdots \label{tr1}
\end{CD}
\end{align}
in $Tr(DG_{T^{2n}_{J=T}})$. In the definition of the holomorphic vector bundle of the form $E_{(r,A,\mu,\mathcal{U})}$, $r'\in \mathbb{N}$ is the rank of $E_{(r,A,\mu,\mathcal{U})}$, and $\frac{r'}{r}A\in M(n;\mathbb{Z})$ corresponds to the 1-st Chern class of $E_{(r,A,\mu,\mathcal{U})}$. In particular, when we set 
\begin{equation*}
A':=\frac{r'}{r}A, \ p':=\frac{r'}{r}p, \ q':=\frac{r'}{r}q, \mu':=\frac{r'}{r}\mu,
\end{equation*}
we can regard
\begin{equation*}
E_{(r,A,\mu,\mathcal{U})}=E_{(r',A',\mu',\mathcal{U})}.
\end{equation*}
Hereafter, we use the notation $E_{(r',A',\mu',\mathcal{U})}$ instead of $E_{(r,A,\mu,\mathcal{U})}$ in order to specify the rank and the 1-st Chern class of $E_{(r,A,\mu,\mathcal{U})}$ (we will also use the notations $E_{(s',B',\nu',\mathcal{V})}$ and $E_{(t',C',\eta',\mathcal{W})}$ in this sense). As a result, we can rewrite the exact triangle (\ref{tr1}) to the exact triangle
\begin{align}
\begin{CD}
\cdots &@>>> E_{(r',A',\mu',\mathcal{U})} @>>> C(\psi) @>>> E_{(s',B',\nu',\mathcal{V})} \\
&@>\psi>> E_{(r',A',\mu',\mathcal{U})}[1] @>>> \cdots . \label{tr2}
\end{CD}
\end{align}
We assume that $r=1$, i.e., $r'=1$, and $C(\psi)\in DG_{T^{2n}_{J=T}}$, namely, assume that $E_{(r',A',\mu',\mathcal{U}')}$ is a holomorphic line bundle, and there exists a holomorphic vector bundle $E_{(t',C',\eta',\mathcal{W})}$ such that $C(\psi)\cong E_{(t',C',\eta',\mathcal{W})}$. Therefore, the exact triangle (\ref{tr2}) is equivalent to the following :
\begin{align}
\begin{CD}
\cdots &@>>> E_{(1,A',\mu',\mathcal{U})} @>>> E_{(t',C',\eta',\mathcal{W})} @>>> E_{(s',B',\nu',\mathcal{V})} \\
&@>>> E_{(1,A',\mu',\mathcal{U})}[1] @>>> \cdots . \label{tr3}
\end{CD}
\end{align}
Then, by Theorem \ref{theo4.1}, 
\begin{equation*}
\mathrm{rank}\hspace{0.5mm}\alpha=1
\end{equation*}
holds, and under the assumption $\mathrm{rank}\hspace{0.5mm}\alpha=1$, we obtain the following two propositions (see \cite[Proposition 5.1]{D} and \cite[Proposition 5.2]{D}\footnote{Although we consider the transformation of the matrix $s'\alpha $ by two matrices $\mathcal{A}$, $\mathcal{D}\in SL(n;\mathbb{Z})$ in Proposition 5.1 in this paper, the matrix $s\alpha $ is transformed by using two matrices $\mathcal{A}$, $\mathcal{D}\in SL(n;\mathbb{Z})$ in \cite[Proposition 5.1]{D}. However, we can prove Proposition 5.1 in this paper in a similar way as described in the proof of \cite[Proposition 5.1]{D}.}).
\begin{proposition} \label{d5.1}
Assume $\mathrm{rank}\hspace{0.5mm}\alpha=1$. Then, there exist two matrices $\mathcal{A}$, $\mathcal{D}\in SL(n;\mathbb{Z})$ such that 
\begin{equation*}
\mathcal{D}^t(s'\alpha)\mathcal{A}=-NE_{ij},
\end{equation*}
where $N\in \mathbb{N}$ and $E_{ij}$ denotes the matrix unit.
\end{proposition}
\begin{proposition} \label{d5.2}
We assume $\mathrm{rank}\hspace{0.5mm}\alpha=1$, and take a pair $(\mathcal{A}, \mathcal{D})$ of two matrices $\mathcal{A}$, $\mathcal{D}\in SL(n;\mathbb{Z})$ which satisfy the statement of Proposition \ref{d5.1}. Then,
\begin{equation*}
(\mathcal{A}^{-1}T\mathcal{D})_{ji'}=0 \ (1\leq i' \not =i\leq n)
\end{equation*}
and
\begin{equation*}
\mathrm{Im}(\mathcal{A}^{-1}T\mathcal{D})_{ji}\not =0
\end{equation*}
hold.
\end{proposition}
For simplicity, we set $T':=\mathcal{A}^{-1}T\mathcal{D}$. We can consider the $n$-dimensional complex torus $T^{2n}_{J=T'}$ by using this matrix $T'$. Let us denote the local complex coordinates of $T^{2n}_{J=T'}$ by $Z=X+T'Y=X+\mathcal{A}^{-1}T\mathcal{D}Y$, where
\begin{equation*}
Z:=(Z_1,\cdots, Z_n)^t, \ X:=(X_1,\cdots, X_n)^t, \ Y:=(Y_1,\cdots, Y_n)^t.
\end{equation*}
Then, two $n$-dimensional complex tori $T^{2n}_{J=T}$, $T^{2n}_{J=T'}$ are biholomorphic to each other, and the biholomorphic map $\varphi : T^{2n}_{J=T'} \stackrel{\sim}{\rightarrow} T^{2n}_{J=T}$ is actually given by
\begin{equation}
\varphi(Z)=\mathcal{A}Z. \label{phi}
\end{equation}
The biholomorphicity of the map $\varphi$ implies the holomorphicity of the pullback bundle $\varphi^*E_{(1,A',\mu',\mathcal{U})}\rightarrow T^{2n}_{J=T'}$, and we can regard $\varphi^*E_{(1,A',\mu',\mathcal{U})}=E_{(1,\tilde{A}',\tilde{\mu}',\mathcal{U}')}$, where
\begin{equation*}
\tilde{A}'=(\tilde{a}'_{kl}):=\mathcal{D}^t A'\mathcal{A}, \ \tilde{\mu}':=\mathcal{D}^t\mu',
\end{equation*}
and $\mathcal{U}'$ is defined by using the data $(\mathcal{U}, \mathcal{A}, \mathcal{D})$. We will also use the notations $E_{(s',\tilde{B}',\tilde{\nu}',\mathcal{V}')}$ and $E_{(t',\tilde{C}',\tilde{\eta}',\mathcal{W}')}$ in this sense. Hence, by the biholomorphic map $\varphi$, the exact triangle (\ref{tr3}) is transformed to the exact triangle
\begin{align}
\begin{CD}
\cdots &@>>> E_{(1,\tilde{A}',\tilde{\mu}',\mathcal{U}')} @>>> E_{(t',\tilde{C}',\tilde{\eta}',\mathcal{W}')} @>>> E_{(s',\tilde{B}',\tilde{\nu}',\mathcal{V}')} \\
&@>>> E_{(1,\tilde{A}',\tilde{\mu}',\mathcal{U}')}[1] @>>> \cdots \label{tr4}
\end{CD}
\end{align}
in $Tr(DG_{T^{2n}_{J=T'}})$. Here, we apply the triangulated functor $Tr(DG_{T^{2n}_{J=T'}})\stackrel{\sim}{\rightarrow} Tr(DG_{T^{2n}_{J=T'}})$ which is induced by the operator $\otimes E_{(1,-\tilde{A}',0,\mathcal{U}_1)}$ to the exact triangle (\ref{tr4}), so it is mapped to the following :
\begin{align}
\begin{CD}
\cdots &@>>> E_{(1,O,\tilde{\mu}',\mathcal{U}')} @>>> E_{(t',NE_{ij},\tilde{\eta}',\mathcal{W}')} @>>> E_{(s',NE_{ij},\tilde{\nu}',\mathcal{V}')} \\
&@>>> E_{(1,O,\tilde{\mu}',\mathcal{U}')}[1] @>>> \cdots . \label{tr5}
\end{CD}
\end{align}
Note that $gcd(s',N)=gcd(t',N)=1$ holds in the exact triangle (\ref{tr5}), and this fact closely related to the simplicity of the holomorphic vector bundles $E_{(s',NE_{ij},\tilde{\nu}',\mathcal{V}')}$ and $E_{(t',NE_{ij},\tilde{\eta}',\mathcal{W}')}$ (cf. \cite[Proposition 5.5]{D}). Furthermore, for two arbitrary holomorphic vector bundles $E_{(r',A',\mu',\mathcal{U})}$, $E_{(r',A',\nu',\mathcal{V})}\rightarrow T^{2n}_{J=T'}$, it is known that there exist $\eta' \in \mathbb{R}^n \oplus T'^t \mathbb{R}^n$ and the set $\mathcal{W}$ such that
\begin{equation*}
E_{(r',A',\nu',\mathcal{V})}\cong E_{(r',A',\mu',\mathcal{U})} \otimes E_{(1,O,\eta',\mathcal{W})}
\end{equation*}
holds (see \cite{matsu}, \cite{mukai}). In particular, we determine these data $(\eta', \mathcal{W})$ explicitly in \cite[Theorem 3.4]{kazushi3}. Hence, there exist
\begin{equation*}
\tilde{\mu}_0', \ \tilde{\nu}_0', \ \tilde{\eta}_0' \in \mathbb{R}^n \oplus T'^t \mathbb{R}^n
\end{equation*}
such that
\begin{equation*}
E_{(1,O,\tilde{\mu}',\mathcal{U}')}\cong E_{(1,O,\tilde{\mu}_0',\mathcal{U}_1)}, \ E_{(s',NE_{ij},\tilde{\nu}',\mathcal{V}')}\cong E_{(s',NE_{ij},\tilde{\nu}_0',\mathcal{U}_{s'})}, \ E_{(t',NE_{ij},\tilde{\eta}',\mathcal{W}')}\cong E_{(t',NE_{ij},\tilde{\eta}_0',\mathcal{U}_{t'})}
\end{equation*}
hold. As a result, we may consider the exact triangle
\begin{align}
\begin{CD}
\cdots &@>>> E_{(1,O,\tilde{\mu}_0',\mathcal{U}_1)} @>>> E_{(t',NE_{ij},\tilde{\eta}_0',\mathcal{U}_{t'})} @>>> E_{(s',NE_{ij},\tilde{\nu}_0',\mathcal{U}_{s'})} \\
&@>>> E_{(1,O,\tilde{\mu}_0',\mathcal{U}_1)}[1] @>>> \cdots \label{tr6}
\end{CD}
\end{align}
in $Tr(DG_{T^{2n}_{J=T'}})$ instead of the exact triangle (\ref{tr5}). Below, we explain the statement of \cite[Theorem 5.6]{D}. Let us define a holomorphic projection $\pi : T^{2n}_{J=T'}\rightarrow T^2_{J=t_{ji}'}=\mathbb{C}/2\pi(\mathbb{Z}\oplus t_{ji}'\mathbb{Z})$ by 
\begin{equation*}
\pi (Z)=Z_j=X_j+t_{ji}'Y_i.
\end{equation*}
Moreover, we decompose $\tilde{\mu}_0'=\tilde{p}_0'+T'^t \tilde{q}_0'$, and define
\begin{equation*}
(\tilde{\mu}_0')_i :=(\tilde{p}_0')_i+t_{ji}'(\tilde{q}_0')_j \in \mathbb{R}\oplus t_{ji}'\mathbb{R}.
\end{equation*}
Similarly as in the case of $(\tilde{\mu}_0')_i$, for two parameters $\tilde{\nu}_0'$, $\tilde{\eta}_0'$, we can associate the notations
\begin{equation*}
(\tilde{\nu}_0')_i, \ (\tilde{\eta}_0')_i\in \mathbb{R}\oplus t_{ji}'\mathbb{R},
\end{equation*}
respectively. Now, on the one-dimensional complex torus $T^2_{J=t_{ji}'}$, we consider the exact triangle
\begin{align*}
\begin{CD}
\cdots &@>>> E_{(1,0,(\tilde{\mu}_0')_i,\mathcal{U}'_1)} @>>> E_{(t',N,(\tilde{\eta}_0')_i,\mathcal{U}'_{t'})} @>>> E_{(s',N,(\tilde{\nu}_0')_i,\mathcal{U}'_{s'})} \\
&@>>> E_{(1,0,(\tilde{\mu}_0')_i,\mathcal{U}'_1)}[1] @>>> \cdots . 
\end{CD}
\end{align*}
Here, for the holomorphic vector bundle of the form $E_{(r',a',\mu',\mathcal{U})}\rightarrow T^2_{J=t_{ji}'}$ of rank $r'$ ($r'\in \mathbb{N}$, $a'\in \mathbb{Z}$, $gcd(r',a')=1$, $\mu' \in \mathbb{R}\oplus t_{ji}'\mathbb{R}$), we took
\begin{equation}
\mathcal{U}'_{r'}:=\left\{ V_j=V, \ U_i=U^{-a'} \in U(r') \right\} \label{U1}
\end{equation}
as $\mathcal{U}$. In particular,
\begin{equation*}
\mathcal{U}'_1=\Bigl\{ V_j=U_i=1 \in U(1) \Bigr\}. 
\end{equation*}
Then, \cite[Theorem 5.6]{D} states that the exact triangle (\ref{tr6}) is equivalent to the exact triangle
\begin{align*}
\begin{CD}
\cdots &@>>> \pi^* E_{(1,0,(\tilde{\mu}_0')_i,\mathcal{U}'_1)}\otimes E_{(1,O,\tilde{\mu},\mathcal{U}_1)} @>>> \pi^* E_{(t',N,(\tilde{\eta}_0')_i,\mathcal{U}'_{t'})}\otimes E_{(1,O,\tilde{\mu},\mathcal{U}_1)} \\
&@>>> \pi^* E_{(s',N,(\tilde{\nu}_0')_i,\mathcal{U}'_{s'})}\otimes E_{(1,O,\tilde{\mu},\mathcal{U}_1)} @>>> \pi^* E_{(1,0,(\tilde{\mu}_0')_i,\mathcal{U}'_1)}[1]\otimes E_{(1,O,\tilde{\mu},\mathcal{U}_1)} @>>> \cdots 
\end{CD}
\end{align*}
in $Tr(DG_{T^{2n}_{J=T'}})$ with a suitable holomorphic line bundle $E_{(1,O,\tilde{\mu},\mathcal{U}_1)}\in \mathrm{Pic}^0(T^{2n}_{J=T'})$.

\subsection{Preparations}
This subsection is devoted to the preparations of subsection 5.3. In particular, in subsection 5.3, we consider several autoequivalences on $Tr(DG_{T^{2n}_{J=T'}})$ in order to transform a given exact triangle to an exact triangle which is easy to treat, so we also explain such autoequivalences on $Tr(DG_{T^{2n}_{J=T'}})$ in this subsection.

Let us consider the exact triangle (\ref{tr2}) in $Tr(DG_{T^{2n}_{J=T}})$ with the assumption $C(\psi)\in DG_{T^{2n}_{J=T}}$, namely, we assume that there exists a holomorphic vector bundle $E_{(t',C',\eta',\mathcal{W})}$ such that $C(\psi)\cong E_{(t',C',\eta',\mathcal{W})}$ :
\begin{align}
\begin{CD}
\cdots &@>>> E_{(r',A',\mu',\mathcal{U})} @>>> E_{(t',C',\eta',\mathcal{W})} @>>> E_{(s',B',\nu',\mathcal{V})} \\
&@>>> E_{(r',A',\mu',\mathcal{U})}[1] @>>> \cdots . \label{tr7}
\end{CD}
\end{align}
Here, we do not assume $\mathrm{rank}\hspace{0.5mm}E_{(r',A',\mu',\mathcal{U})}=1$. Then, we have
\begin{equation*}
\mathrm{rank}\hspace{0.5mm}\alpha =1
\end{equation*}
by Theorem \ref{theo4.1}. Furthermore, Proposition \ref{d5.1} and Proposition \ref{d5.2} are generalized as follows (since we can prove the following two propositions similarly as in the cases of Proposition \ref{d5.1} and Proposition \ref{d5.2}, we omit the proofs of them).
\begin{proposition} \label{d5.1+}
Assume $\mathrm{rank}\hspace{0.5mm}\alpha=1$. Then, there exist two matrices $\mathcal{A}$, $\mathcal{D}\in SL(n;\mathbb{Z})$ such that 
\begin{equation*}
\mathcal{D}^t(r's'\alpha)\mathcal{A}=-NE_{ij},
\end{equation*}
where $N\in \mathbb{N}$.
\end{proposition}
\begin{proposition} \label{d5.2+}
We assume $\mathrm{rank}\hspace{0.5mm}\alpha=1$, and take a pair $(\mathcal{A}, \mathcal{D})$ of two matrices $\mathcal{A}$, $\mathcal{D}\in SL(n;\mathbb{Z})$ which satisfy the statement of Proposition \ref{d5.1+}. Then,
\begin{equation*}
(\mathcal{A}^{-1}T\mathcal{D})_{ji'}=0 \ (1\leq i' \not =i\leq n)
\end{equation*}
and
\begin{equation*}
\mathrm{Im}(\mathcal{A}^{-1}T\mathcal{D})_{ji}\not =0
\end{equation*}
hold.
\end{proposition}
We take the $n$-dimensional complex torus $T^{2n}_{J=T'}$ which is biholomorphic to $T^{2n}_{J=T}$, where $T':=\mathcal{A}^{-1}T\mathcal{D}$. Also in this case, the biholomorphic map $\varphi : T^{2n}_{J=T'}\stackrel{\sim}{\rightarrow} T^{2n}_{J=T}$ is given by the relation (\ref{phi}). By using the biholomorphic map $\varphi$, we obtain the pullback
\begin{align}
\begin{CD}
\cdots &@>>> E_{(r',\tilde{A}',\tilde{\mu}',\mathcal{U}')} @>>> E_{(t',\tilde{C}',\tilde{\eta}',\mathcal{W}')} @>>> E_{(s',\tilde{B}',\tilde{\nu}',\mathcal{V}')} \\
&@>>> E_{(r',\tilde{A}',\tilde{\mu}',\mathcal{U}')}[1] @>>> \cdots \label{tr8}
\end{CD}
\end{align}
of the exact triangle (\ref{tr7}), where the notations 
\begin{equation*}
\tilde{A}', \ \tilde{\mu}', \ \mathcal{U}', \ \tilde{B}', \ \tilde{\nu}', \ \mathcal{V}', \ \tilde{C}', \ \tilde{\eta}', \ \mathcal{W}'
\end{equation*}
are as in subsection 5.1. Thus, we may consider the exact triangle (\ref{tr8}) in $Tr(DG_{T^{2n}_{J=T'}})$ instead of the exact triangle (\ref{tr7}) in $Tr(DG_{T^{2n}_{J=T}})$. We will focus on the exact triangle (\ref{tr8}) in subsection 5.3.

Now, we explain several autoequivalences on $Tr(DG_{T^{2n}_{J=T'}})$ which will be used in subsection 5.3. Let $L\rightarrow T^{2n}_{J=T'}$ be a flat holomorphic line bundle, i.e., $L\in \mathrm{Pic}^0(T^{2n}_{J=T'})$. For each fixed $L\in \mathrm{Pic}^0(T^{2n}_{J=T'})$, we can associate the autoequivalence
\begin{equation*}
\Phi_L : Tr(DG_{T^{2n}_{J=T'}})\stackrel{\sim}{\rightarrow} Tr(DG_{T^{2n}_{J=T'}})
\end{equation*}
which is induced by the operator $\otimes L$. We denote the group of such autoequivalences $\Phi_L : Tr(DG_{T^{2n}_{J=T'}})\stackrel{\sim}{\rightarrow} Tr(DG_{T^{2n}_{J=T'}})$ by
\begin{equation*}
\mathcal{P}ic^0(T^{2n}_{J=T'}).
\end{equation*}

On the other hand, for a given autoequivalence $\check{\Psi} : Tr(Fuk_{\rm aff}(\check{T}^{2n}_{J=T'}))\stackrel{\sim}{\rightarrow} Tr(Fuk_{\rm aff}(\check{T}^{2n}_{J=T'}))$, if we assume that the homological mirror symmetry conjecture for $(T^{2n}_{J=T'}, \check{T}^{2n}_{J=T'})$ holds true, namely, assume that there exists an equivalence 
\begin{equation*}
F : Tr(Fuk_{\rm aff}(\check{T}^{2n}_{J=T'}))\stackrel{\sim}{\rightarrow} Tr(DG_{T^{2n}_{J=T'}})
\end{equation*}
as triangulated categories, there exists an autoequivalence $\Psi : Tr(DG_{T^{2n}_{J=T'}})\stackrel{\sim}{\rightarrow} Tr(DG_{T^{2n}_{J=T'}})$ uniquely such that the following diagram commutes :
\begin{equation*}
\begin{CD}
Tr(Fuk_{\rm aff}(\check{T}^{2n}_{J=T'})) @>F>> Tr(DG_{T^{2n}_{J=T'}}) \\
@V\check{\Psi}VV   @VV\Psi V  \\
Tr(Fuk_{\rm aff}(\check{T}^{2n}_{J=T'})) @>>F> Tr(DG_{T^{2n}_{J=T'}}).
\end{CD}
\end{equation*}
In particular, sometimes we can give a complex or algebraic geometric interpretation for such an autoequivalence $\Psi : Tr(DG_{T^{2n}_{J=T'}})\stackrel{\sim}{\rightarrow} Tr(DG_{T^{2n}_{J=T'}})$ (in general, to give a complex or algebraic interpretation in this context is a difficult problem). Here, as an example of an autoequivalence on $Tr(Fuk_{\rm aff}(\check{T}^{2n}_{J=T'}))$, we explain the autoequivalence on $Tr(Fuk_{\rm aff}(\check{T}^{2n}_{J=T'}))$ which is induced by the symplectic group action on $\check{T}^{2n}_{J=T'}$. We denote the local coordinates of $\check{T}^{2n}_{J=T'}$ by $(X^1,\cdots, X^n, Y^1,\cdots, Y^n)^t$, and set
\begin{equation*}
\check{X}:=(X^1,\cdots, X^n), \ \check{Y}:=(Y^1,\cdots, Y^n)^t.
\end{equation*}
Let us consider the symplectic group
\begin{align*}
Sp^{(-T'^{-1})^t}(2n;\mathbb{Z}):=&\biggl\{ \left( \begin{array}{ccc} g_{11} & g_{12} \\ g_{21} & g_{22} \end{array} \right)\in SL(2n;\mathbb{Z}) \ | \ g_{11}, g_{12}, g_{21}, g_{22}\in M(n;\mathbb{Z}), \\ &g_{11}^t(T'^{-1})^t g_{21}=(g_{11}^t(T'^{-1})^t g_{21})^t, \ g_{11}^t(T'^{-1})^t g_{22}=(g_{11}^t(T'^{-1})^t g_{22})^t, \\
& g_{21}^t T'^{-1}g_{12}-g_{11}^t (T'^{-1})^t g_{22}=(-T'^{-1})^t \biggr\}
\end{align*}
associated to $(-T'^{-1})^t$. For an element
\begin{equation*}
g:=\left( \begin{array}{ccc} g_{11} & g_{12} \\ g_{21} & g_{22} \end{array} \right)\in Sp^{(-T'^{-1})^t}(2n;\mathbb{Z}),
\end{equation*}
we define the $Sp^{(-T'^{-1})^t}(2n;\mathbb{Z})$ action on $\check{T}^{2n}_{J=T'}$ by
\begin{equation*}
\left( \begin{array}{ccc} \check{X} \\ \check{Y} \end{array} \right)\in \check{T}^{2n}_{J=T'} \longmapsto g\left( \begin{array}{ccc} \check{X} \\ \check{Y} \end{array} \right)\in \check{T}^{2n}_{J=T'}.
\end{equation*}
This $Sp^{(-T'^{-1})^t}(2n;\mathbb{Z})$ action defines a symplectic automorphism $\check{\psi}^g : \check{T}^{2n}_{J=T'}\stackrel{\sim}{\rightarrow} \check{T}^{2n}_{J=T'}$ which is given by
\begin{equation*}
\check{\psi}^g\left( \begin{array}{ccc} \check{X} \\ \check{Y} \end{array} \right)=g\left( \begin{array}{ccc} \check{X} \\ \check{Y} \end{array} \right).
\end{equation*}
Then, for an arbitrary object $(L_{(r,A,p)}, \mathcal{L}_{(r,A,p,q)})\in Fuk_{\rm aff}(\check{T}^{2n}_{J=T'})$, we can consider the object
\begin{equation*}
\left( (\check{\psi}^g)^{-1}(L_{(r,A,p)}), (\check{\psi}^g)^*\mathcal{L}_{(r,A,p,q)} \right)\in Fuk_{\rm aff}(\check{T}^{2n}_{J=T'}).
\end{equation*}
Therefore, the symplectic automorphism $\check{\psi}^g$ induces the autoequivalence on $Fuk_{\rm aff}(\check{T}^{2n}_{J=T'})$, and it leads the autoequivalence 
\begin{equation*}
\check{\Psi}^g : Tr(Fuk_{\rm aff}(\check{T}^{2n}_{J=T'}))\stackrel{\sim}{\rightarrow} Tr(Fuk_{\rm aff}(\check{T}^{2n}_{J=T'})).
\end{equation*}

Now, we define a subgroup $\widetilde{Sp}^{(-T'^{-1})^t}(2n;\mathbb{Z})$ of $Sp^{(-T'^{-1})^t}(2n;\mathbb{Z})$ as follows. Let us consider a pair $(\mathfrak{r}, \mathfrak{A})\in \mathbb{Z}\times M(n;\mathbb{Z})$ with the following conditions :
\begin{align}
&gcd(\mathfrak{r}, \mathrm{det}\mathfrak{A})=1, \ \mathrm{i}.\mathrm{e}., \ \mathrm{there} \ \mathrm{exist} \ k, l\in \mathbb{Z} \ \mathrm{such} \ \mathrm{that} \ k\mathfrak{r}+l\mathrm{det}\mathfrak{A}=1. \label{gcd} \\
&\mathrm{two} \ \mathrm{matrices} \ \mathfrak{A}, \ T \ \mathrm{satisfy} \ \mathrm{the} \ \mathrm{relation} \ \mathfrak{A}T=(\mathfrak{A}T)^t. \label{at} 
\end{align}
Then, we define
\begin{align*}
\widetilde{Sp}^{(-T'^{-1})^t}(2n;\mathbb{Z}):=&\biggl\{ \left( \begin{array}{ccc} k I_n & l\tilde{\mathfrak{A}} \\ -\mathfrak{A} & \mathfrak{r}I_n \end{array} \right) \in M(2n;\mathbb{Z}) \ | \\
&\mathfrak{r}\in \mathbb{Z} \ \mathrm{and} \ \mathfrak{A}\in M(n;\mathbb{Z}) \ \mathrm{satisfy} \ \mathrm{the} \ \mathrm{conditions} \ (\ref{gcd}), \ (\ref{at}). \biggr\},
\end{align*}
where $\tilde{\mathfrak{A}}$ denotes the cofactor matrix of $\mathfrak{A}$, namely, $\mathfrak{A}\tilde{\mathfrak{A}}=\tilde{\mathfrak{A}}\mathfrak{A}=\mathrm{det}\mathfrak{A}I_n$ holds. We can easily check that this $\widetilde{Sp}^{(-T'^{-1})^t}(2n;\mathbb{Z})$ is a subgroup of $Sp^{(-T'^{-1})^t}(2n;\mathbb{Z})$ by using the conditions (\ref{gcd}), (\ref{at}). Hence, for each matrix
\begin{equation*}
g(\mathfrak{r}, \mathfrak{A}):=\left( \begin{array}{ccc} k I_n & l\tilde{\mathfrak{A}} \\ -\mathfrak{A} & \mathfrak{r}I_n \end{array} \right) \in \widetilde{Sp}^{(-T'^{-1})^t}(2n;\mathbb{Z}),
\end{equation*}
we can associate the autoequivalence 
\begin{equation*}
\check{\Psi}^{g(\mathfrak{r}, \mathfrak{A})} : Tr(Fuk_{\rm aff}(\check{T}^{2n}_{J=T'}))\stackrel{\sim}{\rightarrow} Tr(Fuk_{\rm aff}(\check{T}^{2n}_{J=T'})). 
\end{equation*}
Also, under the assumption that the homological mirror symmetry conjecture for $(T^{2n}_{J=T'}, \check{T}^{2n}_{J=T'})$ holds true, we can consider the autoequivalence
\begin{equation*}
\Psi_{g(\mathfrak{r}, \mathfrak{A})} : Tr(DG_{T^{2n}_{J=T'}})\stackrel{\sim}{\rightarrow} Tr(DG_{T^{2n}_{J=T'}})
\end{equation*}
compatible with the triangulated functors $F : Tr(Fuk_{\rm aff}(\check{T}^{2n}_{J=T'}))\stackrel{\sim}{\rightarrow} Tr(DG_{T^{2n}_{J=T'}})$, $\check{\Psi}^{g(\mathfrak{r}, \mathfrak{A})} : Tr(Fuk_{\rm aff}(\check{T}^{2n}_{J=T'}))\stackrel{\sim}{\rightarrow} Tr(Fuk_{\rm aff}(\check{T}^{2n}_{J=T'}))$. In this context, we denote the group of autoequivalences $\Psi_{g(\mathfrak{r}, \mathfrak{A})} : Tr(DG_{T^{2n}_{J=T'}})\stackrel{\sim}{\rightarrow} Tr(DG_{T^{2n}_{J=T'}})$ by
\begin{equation*}
\mathrm{Aut}^{\widetilde{Sp}}(T^{2n}_{J=T'}).
\end{equation*}

Here, we explain why we focus on the subgroup $\widetilde{Sp}^{(-T'^{-1})^t}(2n;\mathbb{Z})$ instead of the symplectic group $Sp^{(-T'^{-1})^t}(2n;\mathbb{Z})$ itself. As explained in subsection 5.1, in \cite{D}, we transform the exact triangle (\ref{tr4}) by using the triangulated functor $Tr(DG_{T^{2n}_{J=T'}})\stackrel{\sim}{\rightarrow} Tr(DG_{T^{2n}_{J=T'}})$ which is induced by the operator $\otimes E_{(1,-\tilde{A}',0,\mathcal{U}_1)}$. This triangulated functor is interpreted as the autoequivalence $\Psi_{g(1,\tilde{A}')^{-1}}=\Psi_{g(1,-\tilde{A}')}\in \mathrm{Aut}^{\widetilde{Sp}}(T^{2n}_{J=T'})$ associated to the matrix
\begin{equation*}
g(1,\tilde{A}')=\left( \begin{array}{ccc} I_n & O \\ -\tilde{A}' & I_n \end{array} \right)\in \widetilde{Sp}^{(-T'^{-1})^t}(2n;\mathbb{Z}),
\end{equation*}
and the holomorphic line bundle $E_{(1,\tilde{A}',\tilde{\mu}',\mathcal{U}')}$ in the exact triangle (\ref{tr4}) is mapped to the flat holomorphic line bundle $E_{(1,O,\tilde{\mu}',\mathcal{U}')}$ by $\Psi_{g(1,-\tilde{A}')}$. In this sense, $\mathrm{Aut}^{\widetilde{Sp}}(T^{2n}_{J=T'})$ is a straightforward extension of the group of autoequivalences on $Tr(DG_{T^{2n}_{J=T'}})$ which is discussed in subsection 5.2 in \cite{D} to general settings. In fact, under the assumption that the homological mirror symmetry conjecture for $(T^{2n}_{J=T'}, \check{T}^{2n}_{J=T'})$ holds true, we can transform not only holomorphic line bundles but also holomorphic vector bundles of higher rank to flat holomorphic line bundles by considering $\mathrm{Aut}^{\widetilde{Sp}}(T^{2n}_{J=T'})$. For example, we set
\begin{equation*}
T'=\mathbf{i}\left( \begin{array}{ccc} 1 & 0 \\ 1 & 2 \end{array} \right), \ r=3, \ A=\left( \begin{array}{ccc} 1 & 1 \\ 0 & 2 \end{array} \right), \ p=q=0.
\end{equation*}
It is clear that $AT'=(AT')^t$ holds and $r'=9$. We can also take a suitable set $\mathcal{U}$ by \cite[Proposition 3.2]{kazushi3}. Then, we can obtain the autoequivalence $\Psi_{g(3,A)^{-1}} : Tr(DG_{T^4_{J=T'}})\stackrel{\sim}{\rightarrow} Tr(DG_{T^4_{J=T'}})$ associated to the matrix
\begin{equation*}
g(3,A)=\left( \begin{array}{ccc} I_2 & -\tilde{A} \\ -A & 3I_2 \end{array} \right)\in \widetilde{Sp}^{(-T'^{-1})^t}(4;\mathbb{Z})
\end{equation*}
via the homological mirror symmetry. By using this $\Psi_{g(3,A)^{-1}}$, we can check that the holomorphic vector bundle $E_{(3,A,0,\mathcal{U})}$ of rank $9$ is mapped to the flat holomorphic line bundle.

Now, in order to state our problem, we give the following definition.
\begin{definition}\label{deftr}
Let us consider an exact triangle
\begin{equation}
\begin{CD}
\cdots @>>> E_1 @>>> E_2 @>>> E_3 @>>> E_1[1] @>>> \cdots \label{deftrex}
\end{CD}
\end{equation}
consisting of $E_1$, $E_2$, $E_3 \in \mathrm{Ob}(DG_{T^{2n}_{J=T'}})$. Then, we say that the exact triangle $(\ref{deftrex})$ essentially comes from the one-dimensional complex torus $T^2_{J=\tau}$ if there exist a one-dimensional complex torus $T^2_{J=\tau}$ $(\tau \in \mathbb{H})$ and a holomorphic projection $\pi : T^{2n}_{J=T'}\rightarrow T^2_{J=\tau}$ such that the exact triangle $(\ref{deftrex})$ has the expression
\begin{equation*}
\begin{CD}
\cdots @>>> \Phi \left( \pi^* E_1' \right) @>>> \Phi \left( \pi^* E_2' \right) @>>> \Phi \left( \pi^* E_3' \right) @>>> \Phi \left( \pi^* E_1' \right)[1] @>>> \cdots 
\end{CD}
\end{equation*}
by $\Phi \in \langle \mathrm{Aut}^{\widetilde{Sp}}(T^{2n}_{J=T'}), \mathcal{P}ic^0(T^{2n}_{J=T'}) \rangle$ and an exact triangle
\begin{equation*}
\begin{CD}
\cdots @>>> E_1' @>>> E_2' @>>> E_3' @>>> E_1'[1] @>>> \cdots 
\end{CD}
\end{equation*}
consisting of $E_1'$, $E_2'$, $E_3' \in \mathrm{Ob}(DG_{T^2_{J=\tau}})$. Here, $\langle \mathrm{Aut}^{\widetilde{Sp}}(T^{2n}_{J=T'}), \mathcal{P}ic^0(T^{2n}_{J=T'}) \rangle$ denotes the smallest subgroup of the group $\mathrm{Aut}(Tr(DG_{T^{2n}_{J=T'}}))$ of autoequivalences on $Tr(DG_{T^{2n}_{J=T'}})$ containing the subset $\mathrm{Aut}^{\widetilde{Sp}}(T^{2n}_{J=T'}) \cup \mathcal{P}ic^0(T^{2n}_{J=T'}) \subseteq \mathrm{Aut}(Tr(DG_{T^{2n}_{J=T'}}))$.
\end{definition}
Concerning Definition \ref{deftr}, we will consider the following problem in subsection 5.3 (as mentioned in subsection 5.1, the following problem is already solved in the case $\mathrm{rank}\hspace{0.5mm}E_{(r',\tilde{A}',\tilde{\mu}',\mathcal{U}')}=1$ in \cite{D}). 
\begin{problem}\label{conj}
When does the exact triangle $(\ref{tr8})$ essentially come from a one-dimensional complex torus $?$
\end{problem}

\subsection{Main result}
The purpose of this subsection is to give an answer for Problem \ref{conj}. 

Our first goal is to prove Theorem \ref{mainth} which is a generalization of \cite[Theorem 5.6]{D} to the case that $\mathrm{rank}\hspace{0.5mm}E_{(r',\tilde{A}',\tilde{\mu}',\mathcal{U}')}$ is not necessarily 1. Before stating Theorem \ref{mainth}, we give a remark. As explained in subsection 5.2, the definition of $\mathrm{Aut}^{\widetilde{Sp}}(T^{2n}_{J=T'})$ depends on the homological mirror symmetry conjecture for $(T^{2n}_{J=T'}, \check{T}^{2n}_{J=T'})$. However, Theorem \ref{mainth} itself can be proved without the homological mirror symmetry. Let us denote the group of autoequivalences on $Tr(DG_{T^{2n}_{J=T'}})$ which is induced by the operator $\otimes L$ $(L\in \mathrm{Pic}(T^{2n}_{J=T'}))$ by
\begin{equation*}
\mathcal{P}ic(T^{2n}_{J=T'}).
\end{equation*}
We can regard this $\mathcal{P}ic(T^{2n}_{J=T'})$ as a subgroup of $\langle \mathrm{Aut}^{\widetilde{Sp}}(T^{2n}_{J=T'}), \mathcal{P}ic^0(T^{2n}_{J=T'}) \rangle$. In the proof of Theorem \ref{mainth}, we will actually use autoequivalences which are included in $\mathcal{P}ic(T^{2n}_{J=T'})$ only, so the discussions in the proof of Theorem \ref{mainth} are closed in the complex geometry side. 
\begin{theo}\label{mainth}
The exact triangle $(\ref{tr8})$ essentially comes from a one-dimensional complex torus if $r'=\mathrm{rank}\hspace{0.5mm}E_{(r',\tilde{A}',\tilde{\mu}',\mathcal{U}')}$ and $s'=\mathrm{rank}\hspace{0.5mm}E_{(s',\tilde{B}',\tilde{\nu}',\mathcal{V}')}$ are relatively prime, i.e., $gcd(r', s')=1$.
\end{theo}
\begin{proof}
By Proposition \ref{d5.1+}, we have
\begin{equation*}
\frac{1}{s'}\tilde{B}'=\frac{1}{r'}\tilde{A}'+\frac{N}{r's'}E_{ij},
\end{equation*}
and this fact leads the relation
\begin{equation}
\frac{\tilde{b}_{kl}'}{s'}=\frac{\tilde{a}_{kl}'}{r'} \label{Rel1}
\end{equation}
for $(k,l)\not=(i,j)$. Therefore, by the assumption $gcd(r',s')=1$, we see that there exist two integers $\tilde{a}_{kl}''$, $\tilde{b}_{kl}''\in \mathbb{Z}$ such that
\begin{equation*}
\tilde{a}_{kl}'=r'\tilde{a}_{kl}'', \ \tilde{b}_{kl}'=s'\tilde{b}_{kl}'',
\end{equation*}
and actually, each $\tilde{a}_{kl}''\in \mathbb{Z}$ coincides with $\tilde{b}_{kl}''\in \mathbb{Z}$ since the equality (\ref{Rel1}) holds. Hereafter, we denote
\begin{equation*}
a_{kl}'':=\tilde{a}_{kl}''=\tilde{b}_{kl}''\in \mathbb{Z}
\end{equation*}
for simplicity. By using these integers $a_{kl}''\in \mathbb{Z}$, let us define a matrix $A_{ij}''$ by
\begin{equation*}
A_{ij}'':=\left( \begin{array}{@{\,}cccccc@{\,}}
a_{11}'' & \ldots & a_{1j}'' & \ldots & a_{1n}'' & \\
\vdots & \ddots & \vdots & \ddots & \vdots \\
a_{i1}'' & \ldots & 0 & \ldots & a_{in}'' & \\
\vdots & \ddots & \vdots & \ddots & \vdots \\
a_{n1}'' & \ldots & a_{nj}'' & \ldots & a_{nn}''              
\end{array} \right)\in M(n;\mathbb{Z}).
\end{equation*}
Then, $\frac{1}{r'}\tilde{A}'$, $\frac{1}{s'}\tilde{B}'$ turns out to be
\begin{equation*}
\frac{1}{r'}\tilde{A}'=\frac{\tilde{a}_{ij}'}{r'}E_{ij}+A_{ij}'', \ \frac{1}{s'}\tilde{B}'=\frac{\tilde{b}_{ij}'}{s'}E_{ij}+A_{ij}'',
\end{equation*}
respectively, and
\begin{align*}
\frac{1}{t'}\tilde{C}'&=\frac{\tilde{c}_{ij}'}{t'}+\left( \frac{\tilde{c}_{kl}'}{t'} \right) \\
&=\frac{\tilde{c}_{ij}'}{t'}E_{ij}+\left( \frac{\tilde{a}_{kl}'}{t'}+\frac{\tilde{b}_{kl}'}{t'} \right) \\
&=\frac{\tilde{c}_{ij}'}{t'}E_{ij}+\left( \frac{r'+s'}{t'}a_{kl}'' \right) \\
&=\frac{\tilde{c}_{ij}'}{t'}E_{ij}+A_{ij}''.
\end{align*}
In particular, we can show that $\frac{\tilde{a}_{ij}'}{r'}$, $\frac{\tilde{b}_{ij}'}{s'}$, $\frac{\tilde{c}_{ij}'}{t'}\in \mathbb{Q}$ are irreducible fractions as follows. We focus on $\frac{\tilde{a}_{ij}'}{r'}\in \mathbb{Q}$. The equality
\begin{equation*}
\frac{1}{r'}\tilde{A}'=\frac{\tilde{a}_{ij}'}{r'}E_{ij}+A_{ij}''
\end{equation*}
implies
\begin{equation*}
E_{(r',\tilde{A}',\tilde{\mu}',\mathcal{U}')}\cong E_{(r',\tilde{a}_{ij}'E_{ij},\tilde{\mu}',\mathcal{U}')}\otimes E_{(1,A_{ij}'',0,\mathcal{U}_1)}.
\end{equation*}
Now, $E_{(r',\tilde{A}',\tilde{\mu}',\mathcal{U}')}$ is simple, and $\mathrm{rank}\hspace{0.5mm}E_{(1,A_{ij}'',0,\mathcal{U}_1)}=1$, so $E_{(r',\tilde{a}_{ij}'E_{ij},\tilde{\mu}',\mathcal{U}')}$ is also simple. Hence, by \cite[Proposition 5.5]{D}, we can conclude $gcd(r',\tilde{a}_{ij}')=1$. Similarly, we can show that $\frac{\tilde{b}_{ij}'}{s'}$, $\frac{\tilde{c}_{ij}'}{t'}\in \mathbb{Q}$ are irreducible fractions.

Here, by applying the autoequivalence 
\begin{equation*}
\Psi_{g(1,A_{ij}'')^{-1}}=\Psi_{g(1,-A_{ij}'')} \in \mathrm{Aut}^{\widetilde{Sp}}(T^{2n}_{J=T'})
\end{equation*}
associated to the matrix
\begin{equation*}
g(1,A_{ij}'')=\left( \begin{array}{ccc} I_n & O \\ -A_{ij}'' & I_n \end{array} \right)\in \widetilde{Sp}^{(-T'^{-1})^t}(2n;\mathbb{Z})
\end{equation*}
to the exact triangle (\ref{tr8}), the exact triangle (\ref{tr8}) turns out to be
\begin{align}
\begin{CD}
\cdots &@>>> E_{(r',\tilde{a}_{ij}'E_{ij},\tilde{\mu}',\mathcal{U}')} @>>> E_{(t',\tilde{c}_{ij}'E_{ij},\tilde{\eta}',\mathcal{W}')} @>>> E_{(s',\tilde{b}_{ij}'E_{ij},\tilde{\nu}',\mathcal{V}')} \\
&@>>> E_{(r',\tilde{a}_{ij}'E_{ij},\tilde{\mu}',\mathcal{U}')}[1] @>>> \cdots . \label{tr9}
\end{CD}
\end{align}
As mentioned in subsection 5.2, the above autoequivalence $\Psi_{g(1,-A_{ij}'')}$ is interpreted as the triangulated functor $Tr(DG_{T^{2n}_{J=T'}})\stackrel{\sim}{\rightarrow} Tr(DG_{T^{2n}_{J=T'}})$ which is induced by the operator $\otimes E_{(1,-A_{ij}'',0,\mathcal{U}_1)}$. Furthermore, we modify the exact triangle (\ref{tr9}) as follows. By \cite[Theorem 3.4]{kazushi3}, there exist
\begin{equation*}
\tilde{\mu}_0', \ \tilde{\nu}_0', \ \tilde{\eta}_0'\in \mathbb{R}^n\oplus T'^t \mathbb{R}^n
\end{equation*}
such that
\begin{align*}
&E_{(r',\tilde{a}_{ij}'E_{ij},\tilde{\mu}',\mathcal{U}')}\cong E_{(r',\tilde{a}_{ij}'E_{ij},\tilde{\mu}_0',\mathcal{U}_{r'})}, \ E_{(s',\tilde{b}_{ij}'E_{ij},\tilde{\nu}',\mathcal{V}')}\cong E_{(s',\tilde{b}_{ij}'E_{ij},\tilde{\nu}_0',\mathcal{U}_{s'})}, \\ 
&E_{(t',\tilde{c}_{ij}'E_{ij},\tilde{\eta}',\mathcal{W}')}\cong E_{(t',\tilde{c}_{ij}'E_{ij},\tilde{\eta}_0',\mathcal{U}_{t'})},
\end{align*}
so we may consider the following instead of the exact triangle (\ref{tr9}) :
\begin{align}
\begin{CD}
\cdots &@>>> E_{(r',\tilde{a}_{ij}'E_{ij},\tilde{\mu}_0',\mathcal{U}_{r'})} @>>> E_{(t',\tilde{c}_{ij}'E_{ij},\tilde{\eta}_0',\mathcal{U}_{t'})} @>>> E_{(s',\tilde{b}_{ij}'E_{ij},\tilde{\nu}_0',\mathcal{U}_{s'})} \\
&@>>> E_{(r',\tilde{a}_{ij}'E_{ij},\tilde{\mu}_0',\mathcal{U}_{r'})}[1] @>>> \cdots . \label{tr10}
\end{CD}
\end{align}
We prepare some notations for later convenience. We decompose $\tilde{\mu}_0'=\tilde{p}_0'+T'^t \tilde{q}_0'\in \mathbb{R}^n \oplus T'^t \mathbb{R}^n$, and define
\begin{align*}
(\tilde{\mu}_0')^{\vee i}:=&((\tilde{p}_0')_1,\cdots, (\tilde{p}_0)'_{i-1}, 0, (\tilde{p}_0')_{i+1},\cdots, (\tilde{p}_0')_n)^t \\
&+T'^t ((\tilde{q}_0')_1,\cdots, (\tilde{q}_0')_{j-1}, 0, (\tilde{q}_0)_{j+1},\cdots, (\tilde{q}_0')_n)^t \in \mathbb{R}^n \oplus T'^t \mathbb{R}^n, 
\end{align*}
\hspace{5mm} $(\tilde{\mu}_0')_i:=(\tilde{p}_0')_i+t_{ji}' (\tilde{q}_0')_j \in \mathbb{R}\oplus t_{ji}' \mathbb{R}.$ \\\\
Similarly, we use the notations $(\tilde{\nu}_0')^{\vee i}$, $(\tilde{\eta}_0')^{\vee i}\in \mathbb{R}^n\oplus T'^t \mathbb{R}^n$, $(\tilde{\nu}_0')_i$, $(\tilde{\eta}_0')_i\in \mathbb{R}\oplus t_{ji}'\mathbb{R}$ in the above sense. Here, we give a remark on the exact triangle (\ref{tr10}). In the exact triangle (\ref{tr10}), the non-triviality of
\begin{equation*}
\mathrm{Ext}^1(E_{(s',\tilde{b}_{ij}'E_{ij},\tilde{\nu}_0',\mathcal{U}_{s'})}, E_{(r',\tilde{a}_{ij}'E_{ij},\tilde{\mu}_0',\mathcal{U}_{r'})})
\end{equation*}
implies
\begin{align}
&\frac{1}{r'}\tilde{\mu}_0'^{\wedge i} \equiv \frac{1}{s'}\tilde{\nu}_0'^{\wedge i} \ (\mathrm{mod} \ 2\pi (\mathbb{Z}^{n-1} \oplus \tilde{T}_{ji}'^t \mathbb{Z}^{n-1})), \label{munu} \\
&\frac{1}{s'}\tilde{\nu}_0'^{\wedge i} \equiv \frac{1}{t'}\tilde{\eta}_0'^{\wedge i} \ (\mathrm{mod} \ 2\pi (\mathbb{Z}^{n-1} \oplus \tilde{T}_{ji}'^t \mathbb{Z}^{n-1})), \label{nueta}
\end{align}
where $\tilde{\mu}_0'^{\wedge i}$, $\tilde{\nu}_0'^{\wedge i}$, $\tilde{\eta}_0'^{\wedge i}\in \mathbb{R}^{n-1}\oplus \tilde{T}_{ji}'^t \mathbb{R}^{n-1}$ denote the constant vectors obtained by eliminating the $i$-th components from $\tilde{\mu}_0'$, $\tilde{\nu}_0'$, $\tilde{\eta}_0'$, respectively, and $\tilde{T}_{ji}'$ denotes the matrix obtained by eliminating the $j$-th row and the $i$-th column from $T'$ (see also p.32, p.33 in \cite{D}). In particular, without loss of generality we may assume
\begin{equation*}
\tilde{\mu}:=\frac{1}{r'}\tilde{\mu}_0'^{\vee i}=\frac{1}{s'}\tilde{\nu}_0'^{\vee i}=\frac{1}{t'}\tilde{\eta}_0'^{\vee i}\in \mathbb{R}^n \oplus T'^t \mathbb{R}^n.
\end{equation*}
Now, since the relations
\begin{equation*}
t_{ji'}'=0 \ (1\leq i'\not= i\leq n), \ \mathrm{Im}t_{ji}'\not=0
\end{equation*}
hold by Proposition \ref{d5.2+}, we can define the holomorphic projection $\pi : T^{2n}_{J=T'}\rightarrow \mathbb{C}/2\pi (\mathbb{Z}\oplus t_{ji}'\mathbb{Z})$ by
\begin{equation*}
\pi (Z)=Z_j=X_j+t_{ji}'Y_i.
\end{equation*}
Then, by the non-triviality of $\mathrm{Ext}^1(E_{(s',\tilde{b}_{ij}'E_{ij},\tilde{\nu}_0',\mathcal{U}_{s'})}, E_{(r',\tilde{a}_{ij}'E_{ij},\tilde{\mu}_0',\mathcal{U}_{r'})})$, we may assume $\mathrm{Im}t_{ji}'>0$, so hereafter, we denote $T^2_{J=t_{ji}'}=\mathbb{C}/2\pi (\mathbb{Z}\oplus t_{ji}'\mathbb{Z})$. Let us consider the exact triangle
\begin{align*}
\begin{CD}
\cdots &@>>> E_{(r',\tilde{a}_{ij}',(\tilde{\mu}_0')_i,\mathcal{U}'_{r'})} @>>> E_{(t',\tilde{c}_{ij}',(\tilde{\eta}_0')_i,\mathcal{U}'_{t'})} @>>> E_{(s',\tilde{b}_{ij}',(\tilde{\nu}_0')_i,\mathcal{U}'_{s'})} \\
&@>>> E_{(r',\tilde{a}_{ij}',(\tilde{\mu}_0')_i,\mathcal{U}'_{r'})}[1] @>>> \cdots  
\end{CD}
\end{align*}
in $Tr(DG_{T^2_{J=t_{ji}'}})$, where the notations $\mathcal{U}'_{r'}$, $\mathcal{U}'_{s'}$, $\mathcal{U}'_{t'}$ are used in the sense of the set (\ref{U1}). By using the holomorphic projection $\pi : T^{2n}_{J=T'}\rightarrow T^2_{J=t_{ji}'}$, the following exact triangle in $Tr(DG_{T^{2n}_{J=T'}})$ is induced from the above exact triangle :
\begin{align}
\begin{CD}
\cdots &@>>> \pi^* E_{(r',\tilde{a}_{ij}',(\tilde{\mu}_0')_i,\mathcal{U}'_{r'})} @>>> \pi^* E_{(t',\tilde{c}_{ij}',(\tilde{\eta}_0')_i,\mathcal{U}'_{t'})} @>>> \pi^* E_{(s',\tilde{b}_{ij}',(\tilde{\nu}_0')_i,\mathcal{U}'_{s'})} \\
&@>>> \pi^* E_{(r',\tilde{a}_{ij}',(\tilde{\mu}_0')_i,\mathcal{U}'_{r'})}[1] @>>> \cdots . \label{tr11} 
\end{CD}
\end{align}
We take the autoequivalence
\begin{equation*}
\Phi_{E_{(1,O,\tilde{\mu}, \mathcal{U}_1)}}\in \mathcal{P}ic^0(T^{2n}_{J=T'})
\end{equation*}
associated to the operator $\otimes E_{(1,O,\tilde{\mu}, \mathcal{U}_1)}$. The exact triangle which is obtained by applying $\Phi_{E_{(1,O,\tilde{\mu}, \mathcal{U}_1)}} : Tr(DG_{T^{2n}_{J=T'}})\stackrel{\sim}{\rightarrow} Tr(DG_{T^{2n}_{J=T'}})$ to the exact triangle (\ref{tr11}) is indeed the exact triangle (\ref{tr10}) itself. This completes the proof.
\end{proof}
Although Theorem \ref{mainth} does not depend on the homological mirror symmetry conjecture for $(T^{2n}_{J=T'}, \check{T}^{2n}_{J=T'})$, we can discuss Problem \ref{conj} also in the case $gcd(r',s')\not=1$ via the homological mirror symmetry. However, unfortunately, there exists an exact triangle (\ref{tr8}) that essentially does not come from a one-dimensional complex torus in such cases. In order to construct a counterexample for Problem \ref{conj} in the case $gcd(r',s')\not=1$, we first prove the following lemma.
\begin{lemma}\label{lem}
For an arbitrary irreducible fraction $\mathfrak{r}'\in \mathbb{Q}$ and the matrix
\begin{equation*}
A:=\left( \begin{array}{ccc} 1 & 1 \\ 0 & 2 \end{array} \right),
\end{equation*}
there does not exist a pair $(\mathfrak{r}, \mathfrak{A})\in \mathbb{Z}\times M(2;\mathbb{Z})$ which satisfies the condition $(\ref{gcd})$ and the relation
\begin{equation*}
-2\mathfrak{A}+\mathfrak{r}A=\mathfrak{r}'E_{11}(2kI_2+l\tilde{\mathfrak{A}}A).
\end{equation*}
\end{lemma}
\begin{proof}
First, let us consider the case $\mathfrak{r}'=0$. We assume that there exists a pair $(\mathfrak{r}, \mathfrak{A})\in \mathbb{Z}\times M(2;\mathbb{Z})$ which satisfies the condition (\ref{gcd}) and the relation
\begin{equation}
-2\mathfrak{A}+\mathfrak{r}A=O. \label{rel1}
\end{equation}
It is clear that the relation (\ref{rel1}) turns out to be
\begin{equation}
\mathfrak{A}=\frac{\mathfrak{r}}{2}\left( \begin{array}{ccc} 1 & 1 \\ 0 & 2 \end{array}    \right), \label{rel2}
\end{equation}
so the condition $\mathfrak{A}\in M(2;\mathbb{Z})$ indicates the existence of an integer $\mathfrak{r}''\in \mathbb{Z}$ such that
\begin{equation*}
\mathfrak{r}=2\mathfrak{r}''\in 2\mathbb{Z}.
\end{equation*}
Now, we substitute the relation $\mathfrak{r}=2\mathfrak{r}''$ to the equality (\ref{rel2}). As a result, although we have
\begin{equation*}
\mathrm{det}\mathfrak{A}=\mathfrak{r}\mathfrak{r}'',
\end{equation*}
it implies
\begin{equation*}
gcd(\mathfrak{r}, \mathrm{det}\mathfrak{A})=\mathfrak{r}=1.
\end{equation*}
This fact contradicts the condition $\mathfrak{r}\in 2\mathbb{Z}$.

Let us consider the case $\mathfrak{r}'\not=0$. We assume that there exists a pair $(\mathfrak{r}, \mathfrak{A})\in \mathbb{Z}\times M(2;\mathbb{Z})$ which satisfies the condition (\ref{gcd}) and the relation
\begin{equation}
-2\mathfrak{A}+\mathfrak{r}A=\mathfrak{r}'E_{11}(2kI_2+l\tilde{\mathfrak{A}}A). \label{rel3}
\end{equation}
For simplicity, we set
\begin{equation*}
\mathscr{A}=(\mathscr{A}_{ij}):=2kI_2+l\tilde{\mathfrak{A}}A.
\end{equation*}
Then, 
\begin{equation*}
\mathfrak{r}'E_{11}\mathscr{A}=\left( \begin{array}{ccc} \mathfrak{r}'\mathscr{A}_{11} & \mathfrak{r}'\mathscr{A}_{12} \\ 0 & 0 \end{array} \right), \ -2\mathfrak{A}+\mathfrak{r}A=\left( \begin{array}{ccc} \mathfrak{r}-2\mathfrak{A}_{11}  & \mathfrak{r}-2\mathfrak{A}_{12} \\ -2\mathfrak{A}_{21} & 2\mathfrak{r}-2\mathfrak{A}_{22} \end{array} \right),
\end{equation*}
so we obtain 
\begin{equation*}
\mathfrak{A}_{21}=0, \ \mathfrak{A}_{22}=\mathfrak{r}.
\end{equation*}
This fact indicates
\begin{equation}
\mathrm{det}\mathfrak{A}=\mathrm{det}\left( \begin{array}{ccc} \mathfrak{A}_{11} & \mathfrak{A}_{12} \\ 0 & \mathfrak{r} \end{array} \right)=\mathfrak{r}\mathfrak{A}_{11}. \label{rel4}
\end{equation}
By using the equality (\ref{rel4}), we can rewrite the condition (\ref{gcd}) to
\begin{equation*}
gcd(\mathfrak{r}, \mathrm{det}\mathfrak{A})=gcd(\mathfrak{r}, \mathfrak{r}\mathfrak{A}_{11})=\mathfrak{r}=1,
\end{equation*}
namely, we have
\begin{equation*}
\mathfrak{A}_{22}=\mathfrak{r}=1.
\end{equation*}
Therefore, we see
\begin{align*}
\mathscr{A}&=\left( \begin{array}{ccc} 2k & 0 \\ 0 & 2k \end{array} \right)+l \left( \begin{array}{ccc} 1 & -\mathfrak{A}_{12} \\ 0 & \mathfrak{A}_{11} \end{array} \right) \left( \begin{array}{ccc} 1 & 1 \\ 0 & 2 \end{array} \right) \\
&=\left( \begin{array}{ccc} 2k+l & l(1-2\mathfrak{A}_{12}) \\ 0 & 2k+2l\mathfrak{A}_{11} \end{array} \right),
\end{align*}
and the equality (\ref{rel3}) turns out to be
\begin{equation}
\left( \begin{array}{ccc} 1-2\mathfrak{A}_{11} & 1-2\mathfrak{A}_{12} \\ 0 & 0 \end{array} \right)=\left( \begin{array}{ccc} \mathfrak{r}'(2k+l) & \mathfrak{r}'l(1-2\mathfrak{A}_{12}) \\ 0 & 0 \end{array} \right). \label{rel5}
\end{equation}
In particular, by focusing on the (1,2) component of the equality (\ref{rel5}), we obtain
\begin{equation}
2\mathfrak{A}_{12}(1-\mathfrak{r}'l)=1-\mathfrak{r}'l. \label{rel6}
\end{equation}
Suppose $1-\mathfrak{r}'l\not= 0$. Then, the equality (\ref{rel6}) indicates
\begin{equation*}
\mathfrak{A}_{12}=\frac{1}{2}\not \in \mathbb{Z},
\end{equation*}
and this fact contradicts the condition $\mathfrak{A}\in M(2;\mathbb{Z})$. Suppose $1-\mathfrak{r}'l=0$, namely, $\mathfrak{r}'l=1$. By focusing on the (1,1) component of the equality (\ref{rel5}) under the assumption $\mathfrak{r}'l=1$, we see
\begin{equation*}
\mathfrak{A}_{11}=-\mathfrak{r}'k.
\end{equation*}
Then, unfortunately, the left hand side of the relation $k\mathfrak{r}+l\mathrm{det}\mathfrak{A}=1$ in the condition (\ref{gcd}) turns out to be
\begin{equation*}
k\mathfrak{r}+l\mathrm{det}\mathfrak{A}=k+l\mathrm{det}\mathfrak{A}=k+l (-\mathfrak{r}'k)=k-k=0,
\end{equation*}
so the relation $\mathfrak{r}'l=1$ does not compatible with the condition (\ref{gcd}). This completes the proof.
\end{proof}
We construct a counterexample for Problem \ref{conj} in the case $gcd(r', s')\not=1$. We set
\begin{align*}
&T':=\mathbf{i} \left( \begin{array}{ccc} 1 & 0 \\ 1 & 2 \end{array} \right), \ r:=2, \ A:=\left( \begin{array}{ccc} 1 & 1 \\ 0 & 2 \end{array} \right), \ s:=2, \ B:=\left( \begin{array}{ccc} 2 & 1 \\ 0 & 2 \end{array} \right), \\
&t:=4, \ C:=\left( \begin{array}{ccc} 3 & 2 \\ 0 & 4 \end{array} \right).
\end{align*}
Then, it is clear that $AT'=(AT')^t$, $BT'=(BT')^t$, $CT'=(CT')^t$ hold and $r'=2$, $s'=2$, $t'=4$. Moreover, we can also verify the following :
\begin{equation*}
r'+s'=t', \ \frac{r'}{r}A+\frac{s'}{s}B=\frac{t'}{t}C, \ \mathrm{rank}\hspace{0.5mm}\alpha=1.
\end{equation*}
Let us consider the triangle
\begin{align}
\begin{CD}
\cdots &@>>> E_{(2,A,\mu,\mathcal{U})} @>>> E_{(4,C,\eta,\mathcal{W})} @>>> E_{(2,B,\nu,\mathcal{V})} \\
&@>>> E_{(2,A,\mu,\mathcal{U})}[1] @>>> \cdots , \label{TR1}
\end{CD}
\end{align}
where $\mu$, $\nu$, $\eta \in \mathbb{R}^2\oplus T'^t \mathbb{R}^2$ and $\mathcal{U}$, $\mathcal{V}$, $\mathcal{W}$ denote the sets in the sense of the definition (\ref{setU}). 

We can check that there exist suitable parameters $\mu'$, $\nu'$, $\eta' \in \mathbb{R}^2 \oplus T'^t \mathbb{R}^2$ and suitable sets $\mathcal{U}'$, $\mathcal{V}'$, $\mathcal{W}'$ such that the triangle (\ref{TR1}) becomes an exact triangle
\begin{align*}
\begin{CD}
\cdots &@>>> E_{(2,A,\mu',\mathcal{U}')} @>>> E_{(4,C,\eta',\mathcal{W}')} @>>> E_{(2,B,\nu',\mathcal{V}')} \\
&@>>> E_{(2,A,\mu',\mathcal{U}')}[1] @>>> \cdots 
\end{CD}
\end{align*}
as follows. We first consider the dual complex torus
\begin{equation*}
\hat{T}^4_{J=T'}=\mathbb{C}^2/2\pi (\mathbb{Z}^2\oplus T'^t \mathbb{Z}^2)
\end{equation*}
of the complex torus $T^4_{J=T'}$ and its mirror dual $(\hat{T}^4_{J=T'})^{\vee}$. Let us denote the local coordinates of $(\hat{T}^4_{J=T'})^{\vee}$ by 
\begin{equation*}
\left( \begin{array}{ccc} \check{X} \\ \check{Y} \end{array} \right),
\end{equation*}
where $\check{X}:=(X^1, X^2)^t$, $\check{Y}:=(Y^1, Y^2)^t$. Then, the complexified symplectic form of $(\hat{T}^4_{J=T'})^{\vee}$ is expressed locally as
\begin{equation*}
d\check{X}^t (-T'^{-1}) d\check{Y}.
\end{equation*}
We set
\begin{equation*}
g:=\left( \begin{array}{ccc} O & I_2 \\ -I_2 & O \end{array} \right)\in SL(4;\mathbb{Z}),
\end{equation*}
and define a symplectic morphism $\varphi^g : (\hat{T}^4_{J=T'})^{\vee} \stackrel{\sim}{\rightarrow} \check{T}^4_{J=T'}$ by
\begin{equation*}
\varphi^g \left( \begin{array}{ccc} \check{X} \\ \check{Y} \end{array} \right)
=g \left( \begin{array}{ccc} \check{X} \\ \check{Y} \end{array} \right).
\end{equation*}
Similarly as in the discussions in subsection 5.2, this symplectic morphism $\varphi^g$ induces the equivalence
\begin{equation*}
\Phi^g : Tr(Fuk_{\rm aff}(\check{T}^4_{J=T'}))\stackrel{\sim}{\rightarrow} Tr(Fuk_{\rm aff}((\hat{T}^4_{J=T'})^{\vee}))
\end{equation*}
as triangulated categories. This triangulated functor $\Phi^g$ corresponds to the Fourier-Mukai transform 
\begin{equation*}
\Phi_{\mathcal{P}} : Tr(DG_{T^4_{J=T'}})\stackrel{\sim}{\rightarrow} Tr(DG_{\hat{T}^4_{J=T'}})
\end{equation*}
associated to the Poincar\'{e} line bundle $\mathcal{P}\rightarrow T^4_{J=T'}\times \hat{T}^4_{J=T'}$ via the homological mirror symmetry. Namely, for equivalences
\begin{equation*}
F : Tr(DG_{T^4_{J=T'}})\stackrel{\sim}{\rightarrow} Tr(Fuk_{\rm aff}(\check{T}^4_{J=T'})), \ \hat{F} : Tr(DG_{\hat{T}^4_{J=T'}})\stackrel{\sim}{\rightarrow} Tr(Fuk_{\rm aff}((\hat{T}^4_{J=T'})^{\vee}))
\end{equation*}
as triangulated categories, the following diagram commutes :
\begin{equation*}
\begin{CD}
Tr(DG_{T^4_{J=T'}}) @>F>> Tr(Fuk_{\rm aff}(\check{T}^4_{J=T'})) \\
@V\Phi_{\mathcal{P}}VV   @VV\Phi^g V \\
Tr(DG_{\hat{T}^4_{J=T'}}) @>>\hat{F}> Tr(Fuk_{\rm aff}((\hat{T}^4_{J=T'})^{\vee})).
\end{CD}
\end{equation*}
Note that $\Phi_{\mathcal{P}}\not \in \langle \mathrm{Aut}^{\widetilde{Sp}}(T^4_{J=T'}), \mathcal{P}ic^0(T^4_{J=T'}) \rangle$, namely, the Fourier-Mukai transform $\Phi_{\mathcal{P}}$ is not an autoequivalence on $Tr(DG_{T^4_{J=T'}})$. Hence, by regarding the Fourier-Mukai transform $\Phi_{\mathcal{P}}$ as the triangulated functor $\hat{F}^{-1}\circ \Phi^g \circ F$, we can rewrite the triangle
\begin{align*}
\begin{CD}
\cdots &@>>> \Phi_{\mathcal{P}}(E_{(2,A,\mu,\mathcal{U})}) @>>> \Phi_{\mathcal{P}}(E_{(4,C,\eta,\mathcal{W})}) @>>> \Phi_{\mathcal{P}}(E_{(2,B,\nu,\mathcal{V})}) \\
&@>>> \Phi_{\mathcal{P}}(E_{(2,A,\mu,\mathcal{U})})[1] @>>> \cdots 
\end{CD}
\end{align*}
in $Tr(DG_{\hat{T}^4_{J=T'}})$ to the triangle
\begin{align}
\begin{CD}
\cdots &@>>> E_{(1,\tilde{A},\tilde{\mu},\tilde{\mathcal{U}})} @>>> E_{(3,\tilde{C},\tilde{\eta},\tilde{\mathcal{W}})} @>>> E_{(2,\tilde{B},\tilde{\nu},\tilde{\mathcal{V}})} \\
&@>>> E_{(1,\tilde{A},\tilde{\mu},\tilde{\mathcal{U}})}[1] @>>> \cdots , \label{TR2}
\end{CD}
\end{align}
where
\begin{equation*}
\tilde{A}:=\left( \begin{array}{ccc} -2 & 1 \\ 0 & -1 \end{array} \right), \ \tilde{B}:=\left( \begin{array}{ccc} -2 & 1 \\ 0 & -2 \end{array} \right), \ \tilde{C}:=\left( \begin{array}{ccc} -4 & 2 \\ 0 & -3 \end{array} \right), \ \tilde{\mu}, \tilde{\nu}, \tilde{\eta}\in \mathbb{R}^2\oplus T'\mathbb{R}^2
\end{equation*}
and $\tilde{\mathcal{U}}$, $\tilde{\mathcal{V}}$, $\tilde{\mathcal{W}}$ denote the sets in the sense of the definition (\ref{setU}). In particular, $\mathrm{rank}\hspace{0.5mm}E_{(1,\tilde{A},\tilde{\mu},\tilde{\mathcal{U}})}=1$, $\mathrm{rank}\hspace{0.5mm}E_{(2,\tilde{B},\tilde{\nu},\tilde{\mathcal{V}})}=2$, $\mathrm{rank}\hspace{0.5mm}E_{(3,\tilde{C},\tilde{\eta},\tilde{\mathcal{W}})}=3$. Then, since 
\begin{equation*}
\tilde{A}-\frac{1}{2}\tilde{B}=\left( \begin{array}{ccc} -1 & \frac{1}{2} \\ 0 & 0 \end{array} \right)
\end{equation*}
holds, we can use Proposition \ref{d5.1+} and Proposition \ref{d5.2+}. Actually, two matrices $\mathcal{A}$, $\mathcal{D}\in SL(2;\mathbb{Z})$ in Proposition \ref{d5.1+} are given by
\begin{equation*}
\mathcal{A}:=\left( \begin{array}{ccc} 1 & 1 \\ 1 & 2 \end{array} \right), \ \mathcal{D}:=I_2,
\end{equation*}
and the deformation of $T'$ in Proposition \ref{d5.2+} is described as
\begin{equation*}
\mathcal{A}^{-1}T'\mathcal{D}=\mathbf{i} \left( \begin{array}{ccc} 2 & 0 \\ -1 & 1 \end{array} \right).
\end{equation*}
Furthermore, by using these matrices $\mathcal{A}$, $\mathcal{D}\in SL(2;\mathbb{Z})$, we can transform three matrices $\tilde{A}$, $\tilde{B}$, $\tilde{C}$ to
\begin{equation*}
\mathcal{D}^t\tilde{A}\mathcal{A}=\left( \begin{array}{ccc} -1 & 0 \\ -1 & -2 \end{array} \right), \ \mathcal{D}^t\tilde{B}\mathcal{A}=\left( \begin{array}{ccc} -1 & 0 \\ -2 & -4 \end{array} \right), \ \mathcal{D}^t\tilde{C}\mathcal{A}=\left( \begin{array}{ccc} -2 & 0 \\ -3 & -6 \end{array} \right),
\end{equation*}
respectively. These facts imply that the triangle (\ref{TR2}) is induced from the triangle
\begin{align}
\begin{CD}
\cdots &@>>> E_{(1,-1,\tilde{\mu}',\mathcal{U}_1')} @>>> E_{(3,-2,\tilde{\eta}',\mathcal{U}_3')} @>>> E_{(2,-1,\tilde{\nu}',\mathcal{U}_2')} \\
&@>>> E_{(1,-1,\tilde{\mu}',\mathcal{U}_1')}[1] @>>> \cdots \label{TR3}
\end{CD}
\end{align}
on the one-dimensional complex torus $T^2_{J=2\mathbf{i}}=\mathbb{C}/2\pi(\mathbb{Z}\oplus 2\mathbf{i}\mathbb{Z})$, where $\tilde{\mu}'$, $\tilde{\nu}'$, $\tilde{\eta}'\in \mathbb{R}\oplus 2\mathbf{i}\mathbb{R}$ and each $\mathcal{U}_k'$ ($k=1,2,3$) denotes the set which is given in the definition (\ref{U1}) with $i=j=1$. In particular, since 
\begin{equation*}
\mathrm{dim}\mathrm{Ext}^1(E_{(2,-1,\tilde{\nu}',\mathcal{U}_2')}, E_{(1,-1,\tilde{\mu}',\mathcal{U}_1')})=1
\end{equation*}
holds by \cite[Proposition 3.3]{kazushi}, we see that there exist suitable parameters
\begin{equation*}
\tilde{\mu}_0', \ \tilde{\nu}_0', \ \tilde{\eta}_0'\in \mathbb{R}\oplus 2\mathbf{i}\mathbb{R}
\end{equation*}
such that the triangle (\ref{TR3}) becomes the exact triangle
\begin{align*}
\begin{CD}
\cdots &@>>> E_{(1,-1,\tilde{\mu}_0',\mathcal{U}_1')} @>>> E_{(3,-2,\tilde{\eta}_0',\mathcal{U}_3')} @>>> E_{(2,-1,\tilde{\nu}_0',\mathcal{U}_2')} \\
&@>>> E_{(1,-1,\tilde{\mu}_0',\mathcal{U}_1')}[1] @>>> \cdots 
\end{CD}
\end{align*}
by \cite[Theorem 4.10]{kazushi} (see also section 6 in \cite{kazushi}). Thus, we can conclude that there exist suitable parameters $\mu'$, $\nu'$, $\eta'\in \mathbb{R}^2\oplus T'^t \mathbb{R}^2$ and suitable sets $\mathcal{U}'$, $\mathcal{V}'$, $\mathcal{W}'$ such that the triangle (\ref{TR1}) becomes the exact triangle
\begin{align}
\begin{CD}
\cdots &@>>> E_{(2,A,\mu',\mathcal{U}')} @>>> E_{(4,C,\eta',\mathcal{W}')} @>>> E_{(2,B,\nu',\mathcal{V}')} \\
&@>>> E_{(2,A,\mu',\mathcal{U}')}[1] @>>> \cdots \label{TR4}
\end{CD}
\end{align}
under the assumption that the homological mirror symmetry conjecture for $(T^4_{J=T'}, \check{T}^4_{J=T'})$ holds true. In these discussions, indeed, we can also regard the (exact) triangle (\ref{TR1}) as the exact triangle which is induced from the (exact) triangle (\ref{TR3}) on the one-dimensional complex torus $T^2_{J=2\mathbf{i}}$. However, as mentioned above, \textbf{we use the Fourier-Mukai transform}
\begin{equation*}
\Phi_{\mathcal{P}}\not \in \langle \mathrm{Aut}^{\widetilde{Sp}}(T^4_{J=T'}), \mathcal{P}ic^0(T^4_{J=T'}) \rangle
\end{equation*}
\textbf{which is not included in the group of autoequivalences on $Tr(DG_{T^4_{J=T'}})$ when we transform the triangle (\ref{TR1}) to the triangle (\ref{TR2})}. Thus, in these discussions, we can not conclude that the (exact) triangle (\ref{TR1}) essentially comes from (the (exact) triangle (\ref{TR3}) on) the one-dimensional complex torus $T^2_{J=2\mathbf{i}}$ in the sense of Definition \ref{deftr}.

The remaining thing to be checked is that the exact triangle (\ref{TR4}) gives a counterexample for Problem \ref{conj}. We take an arbitrary autoequivalence $\Psi_{g(\mathfrak{r},\mathfrak{A})^{-1}}\in \mathrm{Aut}^{\widetilde{Sp}}(T^4_{J=T'})$ associated to 
\begin{equation*}
g(\mathfrak{r},\mathfrak{A})=\left( \begin{array}{ccc} kI_2 & l\tilde{\mathfrak{A}} \\ -\mathfrak{A} & \mathfrak{r}I_2 \end{array} \right)\in \widetilde{Sp}^{(-T'^{-1})^t}(4;\mathbb{Z}).
\end{equation*}
Then, it is enough to check that
\begin{equation*}
\Psi_{g(\mathfrak{r},\mathfrak{A})^{-1}}(E_{(2,A,\mu',\mathcal{U}')})
\end{equation*}
does not have the expression of the form
\begin{equation*}
E_{(n,aE_{11},\mu'',\mathcal{U}'')}.
\end{equation*}
Here, $n\in \mathbb{N}$ and $a\in \mathbb{Z}$ are relatively prime, i.e., $gcd(n,a)=1$, $\mu''\in \mathbb{R}^2\oplus T'^t \mathbb{R}^2$, and $\mathcal{U}''$ is a set which is defined by using the data $(n, aE_{11})\in \mathbb{N}\times M(2;\mathbb{Z})$. Now, note that the autoequivalence 
\begin{equation*}
\check{\Psi}^{g(\mathfrak{r},\mathfrak{A})^{-1}} : Tr(Fuk_{\rm aff}(\check{T}^4_{J=T'}))\stackrel{\sim}{\rightarrow} Tr(Fuk_{\rm aff}(\check{T}^4_{J=T'}))
\end{equation*}
compatible with the triangulated functors 
\begin{equation*}
F : Tr(DG_{T^4_{J=T'}})\stackrel{\sim}{\rightarrow} Tr(Fuk_{\rm aff}(\check{T}^4_{J=T'})), \ \Psi_{g(\mathfrak{r},\mathfrak{A})^{-1}} : Tr(DG_{T^4_{J=T'}})\stackrel{\sim}{\rightarrow} Tr(DG_{T^4_{J=T'}}), 
\end{equation*}
i.e., $F\circ \Psi_{g(\mathfrak{r},\mathfrak{A})^{-1}}=\check{\Psi}^{g(\mathfrak{r},\mathfrak{A})^{-1}}\circ F$ holds. We can express the object $F(E_{(2,A,\mu',\mathcal{U}')})\in \mathrm{Ob}(Fuk_{\rm aff}(\check{T}^4_{J=T'}))$ as
\begin{equation*}
(L_{(2,A,\check{p}')}, \mathcal{L}_{(2,A,\check{p}',\check{q}')})
\end{equation*}
by using the suitable parameters $\check{p}'$, $\check{q}'\in \mathbb{R}^2$. Suppose that
\begin{equation*}
\check{\Psi}^{g(\mathfrak{r},\mathfrak{A})^{-1}}(F(E_{(2,A,\mu',\mathcal{U}')}))
\end{equation*}
has the expression of the form
\begin{equation*}
(L_{(n,aE_{11},\check{p}'')}, \mathcal{L}_{(n,aE_{11},\check{p}'',\check{q}'')}),
\end{equation*}
where the notations $n\in \mathbb{N}$, $a\in \mathbb{Z}$ are as in the above, and $\check{p}''$, $\check{q}''\in \mathbb{R}^2$. Let us consider the transformation
\begin{align*}
g(\mathfrak{r},\mathfrak{A})\left( \begin{array}{ccc} \check{x} \\ \frac{1}{2}A\check{x}+\frac{1}{2}\check{p}' \end{array} \right)&=\left( \begin{array}{ccc} kI_2 & l\tilde{\mathfrak{A}} \\ -\mathfrak{A} & \mathfrak{r}I_2 \end{array} \right) \left( \begin{array}{ccc} \check{x} \\ \frac{1}{2}A\check{x}+\frac{1}{2}\check{p}' \end{array} \right) \\
&=\left( \begin{array}{ccc} \left( kI_2+\frac{l}{2}\tilde{\mathfrak{A}}A \right)\check{x}+\frac{l}{2}\tilde{\mathfrak{A}}\check{p}' \\ \left( -\mathfrak{A}+\frac{\mathfrak{r}}{2}A \right)\check{x}+\frac{\mathfrak{r}}{2}\check{p}' \end{array} \right)
\end{align*}
of the Lagrangian submanifold $L_{(2,A,\check{p}')}$ by $\check{\Psi}^{g(\mathfrak{r},\mathfrak{A})^{-1}}$. We set
\begin{align*}
&\check{X}=\left( kI_2+\frac{l}{2}\tilde{\mathfrak{A}}A \right)\check{x}+\frac{l}{2}\tilde{\mathfrak{A}}\check{p}'=\frac{1}{2}(2kI_2+l\tilde{\mathfrak{A}}A)\check{x}+\frac{l}{2}\tilde{\mathfrak{A}}\check{p}', \\
&\check{Y}=\left( -\mathfrak{A}+\frac{\mathfrak{r}}{2}A \right)\check{x}+\frac{\mathfrak{r}}{2}\check{p}'=\frac{1}{2}(-2\mathfrak{A}+\mathfrak{r}A)\check{x}+\mathfrak{r}{2}\check{p}'.
\end{align*}
Since $\check{\Psi}^{g(\mathfrak{r},\mathfrak{A})^{-1}}(L_{(2,A,\check{p}')})$ has the expression of the form $L_{(n,aE_{11},\check{p}'')}$ by the assumption, we may assume that the matrix $2kI_2+l\tilde{\mathfrak{A}}A$ has the inverse $(2kI_2+l\tilde{\mathfrak{A}}A)^{-1}$, and the relation
\begin{equation}
\check{Y}=(-2\mathfrak{A}+\mathfrak{r}A)(2kI_2+l\tilde{\mathfrak{A}}A)^{-1}\check{X}-\frac{l}{2}(-2\mathfrak{A}+\mathfrak{r}A)(2kI_2+l\tilde{\mathfrak{A}}A)^{-1}\tilde{\mathfrak{A}}\check{p}'+\frac{\mathfrak{r}}{2}\check{p}' \label{lag1}
\end{equation}
need to coincide with the relation
\begin{equation}
\check{Y}=\frac{a}{n}E_{11}\check{X}+\frac{1}{n}\check{p}''. \label{lag2}
\end{equation}
In particular, by comparing the slope of (\ref{lag1}) with the slope of (\ref{lag2}), although we obtain
\begin{equation*}
(-2\mathfrak{A}+\mathfrak{r}A)(2kI_2+l\tilde{\mathfrak{A}}A)^{-1}=\frac{a}{n}E_{11},
\end{equation*}
this contradicts the statement of Lemma \ref{lem}. Thus, indeed, the exact triangle (\ref{TR4}) gives a counterexample for Problem \ref{conj} under the assumption that the homological mirror symmetry conjecture for $(T^4_{J=T'}, \check{T}^4_{J=T'})$ holds true.

\section*{Acknowledgment}
I would like to thank Hiroshige Kajiura for various advices in writing this paper. I am also grateful to Satoshi Sugiyama for helpful comments. Finally, I would like to thank the referee for reading this paper carefully. This work was supported by Grant-in-Aid for JSPS Research Fellow 18J10909 and JSPS KAKENHI Grant Number JP16H06337.


\begin{thebibliography}{99}
\bibitem{abouzaid}
M. Abouzaid, I. Smith : \textit{Homological mirror symmetry for the four-torus}, Duke Mathematical Journal, 152.3 (2010), 373-440.
\bibitem{A-P}
D. Arinkin, A. Polishchuk : \textit{Fukaya category and Fourier transform}, AMS IP STUDIES IN ADVANCED MATHEMATICS, 2001, 23 : 261-274.
\bibitem{bondal}
A. Bondal, M. Kapranov : \textit{Enhanced triangulated categories}, Math. USSR Sbornik 70:93-107, 1991.
\bibitem{Fukaya category}
K. Fukaya : \textit{Morse homotopy, $A^{\infty }$-category, and Floer homologies}, In: Proceedings of GARC Workshop on Geometry and Topology '93 (Seoul, 1993). Lecture Notes in Series, vol. 18, pp. 1-102. Seoul Nat. Univ., Seoul (1993).
\bibitem{Fuk}
K. Fukaya : \textit{Mirror symmetry of abelian varieties and multi theta functions}, J. Alg. Geom. 11, 393-512 (2002).
\bibitem{Hano}
J. Hano : \textit{A geometrical characterization of a class of holomorphic vector bundles over a complex torus}, Nagoya Math. J. Vol. 61 (1976), 197-202.
\bibitem{kazushi}
K. Kobayashi : \textit{On exact triangles consisting of stable vector bundles on tori}, Differential Geometry and its Applications 53 (2017) 268-292, arXiv : math.DG/1610.02821.
\bibitem{D}
K. Kobayashi : \textit{On exact triangles consisting of projectively flat bundles on complex tori}, doctoral thesis at Chiba University.
\bibitem{kazushi2}
K. Kobayashi : \textit{Remarks on the homological mirror symmetry for tori}, Journal of Geometry and Physics 164 (2021) : 104190, arXiv : math.DG/2004.05621.
\bibitem{kazushi3}
K. Kobayashi : \textit{The bijectivity of mirror functors on tori}, to appear in Kyoto Journal of Mathematics, arXiv : math.DG/1905.00692.
\bibitem{koba}
S. Kobayashi : \textit{Differential Geometry of Complex Vector Bundles}, Princeton University Press, 1987.
\bibitem{Kon}
M. Kontsevich : \textit{Homological algebra of mirror symmetry}, In Proceedings of the International Congress of Mathematicians, Vol. 1, 2 (Z\"{u}rich, 1994), Birkh\"{a}user, 1995, pages 120-139, arXiv : math.AG/9411018.
\bibitem{leung}
N.C. Leung, S.T. Yau, E. Zaslow : \textit{From special Lagrangian to Hermitian-Yang-Mills via Fourier-Mukai transform}, Adv. Theor. Math. Phys. 4:13191341, 2000.
\bibitem{K-H}
M. L\"{u}bke, A. Teleman : \textit{The Kobayashi-Hitchin Correspondence}, World Sci. Publ., 1995.
\bibitem{matsu}
Y. Matsushima : \textit{Heisenberg groups and holomorphic vector bundles over a complex torus}, Nagoya Math. J. Vol. 61 (1976), 161-195.
\bibitem{mukai}
S. Mukai : \textit{Semi-homogeneous vector bundles on an abelian variety}, J. Math. Kyoto Univ. 18 (1978), no. 2, 239-272.
\bibitem{orlov}
D.O. Orlov : \textit{Remarks on generators and dimensions of triangulated categories}, Moscow Mathematical Journal 9.1 (2009) : 143-149, arXiv : math.AG/0804.1163.
\bibitem{A-inf}
A. Polishchuk : \textit{$A_{\infty }$-structures on an elliptic curve}, Comm. Math. Phys. 247, 527 (2004), arXiv : math.AG/0001048.
\bibitem{elliptic}
A. Polishchuk, E. Zaslow : \textit{Categorical mirror symmetry $:$ the elliptic curve}, Adv. Theor. Math. Phys. 2, 443-470 (1998), arXiv : math.AG/9801119.
\bibitem{SYZ}
A. Strominger, S.T. Yau, E. Zaslow : \textit{Mirror Symmetry is T-duality}, Nucl. Phys. B, 479:243-259, 1996.
\bibitem{yang}
J.H. Yang : \textit{Holomorphic vector bundles over complex tori}, J. Korean Math. Soc. 26(1989), No.1, pp.117-142.
\end{thebibliography}
\end{document}